\def\0{\emptyset}
\begin{document}
\newtheorem{claim}{Claim}[section]
\newtheorem{theorem}{Theorem}[section]
\newtheorem{corollary}[theorem]{Corollary}
\newtheorem{definition}[theorem]{Definition}
\newtheorem{conjecture}[theorem]{Conjecture}
\newtheorem{question}[theorem]{Question}
\newtheorem{lemma}[theorem]{Lemma}
\newtheorem{observation}[theorem]{Observation}
\newtheorem{proposition}[theorem]{Proposition}
\newenvironment{proof}{\noindent {\bf
Proof.}}{\rule{3mm}{3mm}\par\medskip}
\newcommand{\remark}{\medskip\par\noindent {\bf Remark.~~}}
\newcommand{\pp}{{\it p.}}
\newcommand{\de}{\em}

\title{\bf Structure connectivity and substructure connectivity of twisted hypercubes}

\author{Dong Li\footnote {Hubei Key Laboratory of Applied Mathematics, Faculty of Mathematics and Statistic, Hubei University, Wuhan 430062,
PR China}, ~Xiaolan Hu\footnote{School of Mathematics and Statistics $\&$ Hubei Key Laboratory of Mathematical Sciences, Central China Normal University,
Wuhan 430079, PR China} , ~Huiqing Liu$^*$
}


\date{}
\maketitle \baselineskip 17.8pt

\begin{abstract}

Let $G$ be a graph and $T$ a certain connected subgraph of $G$. The $T$-structure connectivity $\kappa(G; T)$ (or resp., $T$-substructure connectivity $\kappa^{s}(G; T)$) of $G$ is the minimum number of a set of subgraphs $\mathcal{F}=\{T_{1}, T_{2}, \ldots, T_{m}\}$ (or resp., $\mathcal{F}=\{T^{'}_{1}, T^{'}_{2}, \ldots, T^{'}_{m}\}$) such that $T_{i}$ is isomorphic to $T$ (or resp., $T^{'}_{i}$ is a connected subgraph of $T$) for every $1\leq i \leq m$, and $\mathcal{F}$'s removal will disconnect $G$.
The twisted hypercube $H_{n}$ is a new variant of hypercubes with asymptotically optimal diameter introduced by X.D. Zhu. In this paper, we will determine both $\kappa(H_{n}; T)$ and $\kappa^{s}(H_{n}; T)$ for $T\in\{K_{1,r}, P_{k}\}$, respectively, where $3\leq r\leq 4$ and $1 \leq k \leq n$.

\vskip 0.1cm {\bf Keywords:}  Twisted hypercube; $T$-structure connectivity; $T$-substructure connectivity
\end{abstract}

\section{Introduction}

Interconnection networks play an important role in parallel and distributed systems. An interconnection network can be represented by an undirected graph $G =(V, E)$, where each vertex in $V$ corresponds to a processor, and every edge in $E$ corresponds to a communication link. The {\em neighborhood} $N_G(v)$ of a vertex $v$ in a graph $G =(V, E)$ is the set of vertices adjacent to $v$. For $S\subset V(G)$, the neighborhood $N_G(S)$ of $S$ in $G$ is defined as $N_G(S) =(\cup_{v\in S}N_G(v))-S$. We use $P_k=\langle v_1, v_2, \ldots, v_k\rangle$
and $C_k=\langle v_1, v_2, \ldots, v_k, v_1\rangle$
to denote a path and a cycle of order $k$, respectively. For $S\subset V(G)$, we use $G[S]$ to denote the subgraph of $G$ induced by $S$. For a subgraph $H$ of a graph $G$, we use $G-H$ to denote the subgraph of $G$ induced by $V(G)-V(H)$. For a set $\mathcal{F}=\{T_1, T_2, \ldots, T_m\}$, where each $T_i$ is isomorphic to a connected subgraph of $G$, we use $G-\mathcal{F}$ to denote the subgraph of $G$ induced by $V(G)-V(T_1)-V(T_2)-\cdots-V(T_m)$. For graph definition and notation not mentioned here we follow \cite{Bondy08}.

The {\em connectivity} $\kappa(G)$ of a graph $G$ is the minimum number of vertices whose removal leaves the remaining graph disconnected or trivial. The connectivity is one of the most important parameters to measure the reliability and fault tolerance of an interconnection network \cite{Chen03}, \cite{Esfahanian}, \cite{Fan03}, \cite{Harary}, \cite{Xu07}. The larger the connectivity is, more reliable the interconnection network is.
However, this parameter has a deficiency. That is, it tacitly assumes that all vertices adjacent to the same vertex of $G$ could fail at the same time, which is highly unlikely for large-scale systems. To compensate for this shortcoming, F\'{a}brega \cite{F} proposed the concept of $g$-extra connectivity. The {\em $g$-extra connectivity} of a graph $G$, denoted by $\kappa_g(G)$, is the minimum number of vertices of $G$ whose deletion disconnects $G$ and every remaining component has more than $g$ vertices. Some recent results on the $g$-extra connectivity of graphs see \cite{Chang13}, \cite{Hsieh12}, \cite{Yang09}, \cite{Zhu07}.

Instead of considering the effect of vertices becoming faulty, Lin {\em et al.} \cite{Lin16} considered the effect caused by some structures becoming faulty, and introduced the concept of structure connectivity and substructure connectivity of graphs.
Let $T$ be a connected subgraph of a graph $G$, and $\mathcal{F}$ a set of subgraphs of $G$ such that every element in $\mathcal{F}$ is isomorphic to $T$. Then $\mathcal{F}$ is called a {\em $T$-structure-cut} if $G-\mathcal{F}$ is disconnected. The {\em $T$-structure connectivity} $\kappa(G; T)$ of $G$ is defined as the cardinality of a minimum $T$-structure-cut of $G$. Similarly, let $\mathcal{F}$ be a set of subgraphs of $G$ such that every element in $\mathcal{F}$ is isomorphic to a connected subgraph of $T$. Then $\mathcal{F}$ is called a {\em $T$-substructure-cut} if $G-\mathcal{F}$ is disconnected. The {\em $T$-substructure connectivity} $\kappa^{s}(G; T)$ of $G$ is defined as the cardinality of a minimum $T$-substructure-cut of $G$. By definition, $\kappa^{s}(G; T)\leq \kappa(G; T)$. Note that $K_1$-structure connectivity and $K_1$-substructure connectivity are exactly the classical vertex connectivity.

Lin et al.\cite{Lin16} determined $\kappa(Q_n; T)$ and $\kappa^{s}(Q_n; T)$ for the hypercube $Q_n$ and $T\in\{K_{1,1}, K_{1,2},$ $ K_{1,3}, C_4\}$, respectively.
Sabir and Meng \cite{Sabir17} generalized their results and established $\kappa(Q_n; T)$ and $\kappa^{s}(Q_n; T)$ for $T\in\{P_k, C_{2k}, K_{1,4}\}$, where $3 \leq k \leq n$, they also determined $\kappa(FQ_n; T)$ and $\kappa^{s}(FQ_n; T)$ for  the folded hypercube $FQ_n$ and $T\in\{P_k, C_{2k}, K_{1,3}\}$, where $n \geq7$ and $2 \leq k \leq n$. Moreover, Mane \cite{Mane} determined $\kappa(Q_n; Q_m)$  with $m\leq n-2$ and established
the upper bound of $\kappa(Q_n; C_{2k})$ with  $2\leq k\leq 2^{n-1}$.
Recently, Lv et al.\cite{Lv18} explored $\kappa(Q_n^k; T)$ and $\kappa^{s}(Q_n^k; T)$ for the $k$-ary $n$-cube $Q_n^k$ and $T\in\{K_1,K_{1,1}, K_{1,2}, K_{1,3}\}$.

The interconnection network considered in this study is the twisted hypercube $H_n$, which is a hypercube-like network with asymptotically optimal diameter introduced by Zhu \cite{Zhu17}.  $H_n$ has many attractive properties,  such as low vertex degree, strong connectivity and super connectivity. Qi and Zhu \cite{Qi} considered the fault-diameter and wide-diameter of the twisted hypercubes. Liu {\em et al.} \cite{Liu} determined the $R^g$-vertex-connectivity and established the $g$-good neighbor conditional diagnosability of the twisted hypercubes under the PMC model and MM$^*$ model, respectively.

In this paper, we establish both $\kappa(H_n; T)$ and $\kappa^{s}(H_n; T)$ for $T\in\{K_{1,r}, P_k\}$, where $3\leq r\leq 4$ and $1\leq k \leq n$.
The rest of the paper is organized as follows. In Section 2, we introduce the definition of the $n$-dimensional twisted hypercube $H_n$ and provide preliminaries for our results. In Section 3, we determine $\kappa(H_n; K_{1,r})$ and $\kappa^{s}(H_n; K_{1,r})$ for $3\leq r\leq 4$. In Section 4, we determine $\kappa(H_n; P_k)$ and $\kappa^{s}(H_n; P_k)$ for $1 \leq k \leq n$. Our conclusions are given in Section 5.

\section{Preliminaries}

In this section, we first introduce the definition of the $n$-dimensional twisted hypercube $H_n$, then present some properties of $H_n$.

Denote by $Z_2^n$ the set of binary strings of length $n$. For $x,~y\in Z^n_2$, $x\oplus y$ denotes the sum of $x$ and
$y$ in the group $Z_2^n$, i.e., $(x\oplus y)_i = x_i + y_i (mod~2)$ (for $x\in Z_2^n$, $x_i$ denotes the $i$th bit of $x$).

If $x$ is a binary string of length $n_1$ and $y$ is a binary string of length $n_2$, then $xy$ is the concatenation of $x$ and $y$, which is a binary string of length $n_1 +n_2$. If $Z$ is a set of binary strings, then let $xZ = \{xy~:~y\in Z\}$.

For $x\in Z^n_2$, and for $1\leq i<j\leq n$, denote by $x[i,j]$ the binary string $x_ix_{i+1}\ldots x_j$.

We first present an integer function $\kappa(n)$.

\begin{definition}  Let $\kappa$ be the integer function defined as the following:
$$\kappa(n)=\left\{
\begin{array}{ll}
0,& \mbox{if $n=1$,}\\
\max\{1,\lceil \log_2n-2\log_2\log_2n\rceil\},  & \mbox{otherwise.}
\end{array}
\right.$$
\end{definition}
~~~~Next,  a permutation $\phi$ of binary strings is given as follows.

\begin{definition}
Assume $x\in Z^n_2$. Then $\phi(x)\in Z^n_2$ is the binary string such that

~~~~~~~~~~~~~~~~~~~~~~~~~~~~~~$\phi(x)[1,\kappa(n)] = x[1,\kappa(n)] \oplus x[n-\kappa(n) + 1, n]$,

~~~~~~~~~~~~~~~~~~~~~~~~~$\phi(x)[\kappa(n) + 1,n] = x[\kappa(n) + 1,n]$.
\end{definition}

Note that the restriction of $\phi$ to $Z^n_2$ is indeed a permutation of $Z^n_2$, with $\phi^2(x) = x$.

Now we give a recursive definition of the twisted hypercubes.

\begin{definition}
Set $H_1:=K_2$, with vertices 0 and 1. For $n\geq  2$, $H_n$ is obtained from two copies of $H_{n-1}$, $0H_{n-1}$ and $1H_{n-1}$, by adding edges connecting $0x$ and $1\phi(x)$ for all $x\in H_{n-1}$.
\end{definition}

The vertex set of $H_n$ is $Z^n_2$. It follows from the definition that $H_1=K_2$, $H_2=C_4$,  $H_3$ and $H_4$ are depicted in Figure 1.

\begin{figure}[!htb]
\centering
{\includegraphics[height=0.35\textwidth]{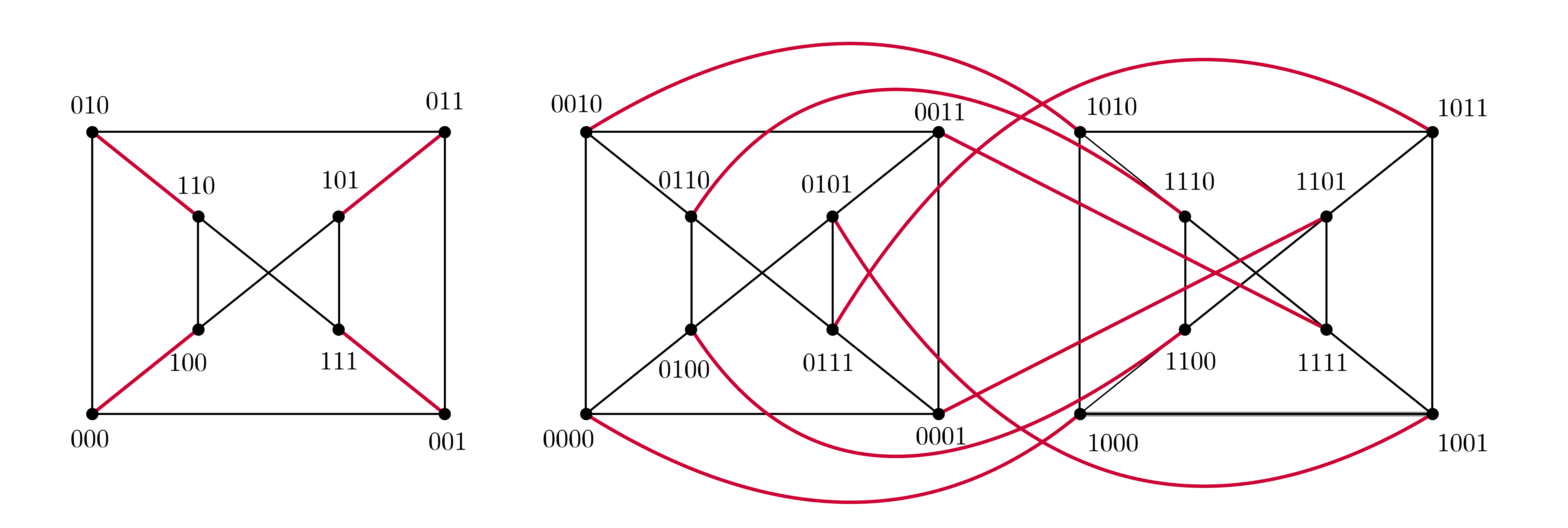}}

$H_3$~~~~~~~~~~~~~~~~~~~~~~~~~~~~~~~~~~~~~~~~~~~~~~~~~~~~$H_4$~~~~~~~~~~~~~~

Figure 1. $H_3$ and $H_4$
\end{figure}

By the definition of $H_n$, the following lemma is straightforward.

\begin{lemma}
\label{triangle-free}
$H_{n}$ is triangle-free.
\end{lemma}

It is seen that $H_n$ is a kind of $n$-dimensional hypercube-like
networks. Then $H_n$ is a $n$-regular graph with connectivity $n$. Furthermore, $H_n$ has the following properties.

\begin{lemma}  \cite{Liu}
\label{common-neighbor}
For any $u,v\in V(H_n)$, $u$ and $v$ have at most two common neighbors.
\end{lemma}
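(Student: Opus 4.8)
The plan is to argue by induction on $n$, using the recursive construction of $H_n$ in Definition 2.3. For the base cases, $H_1=K_2$ contains no pair of vertices with a common neighbor, while $H_2=C_4$ has exactly two common neighbors for each antipodal pair and none for each adjacent pair; so the statement holds, and is already sharp, when $n\le 2$. For the inductive step fix $n\ge 3$, assume the statement for $H_{n-1}$, and let $u,v$ be distinct vertices of $H_n$. Recall that $V(H_n)=0Z_2^{n-1}\cup 1Z_2^{n-1}$, that the subgraph induced on each of $0Z_2^{n-1}$ and $1Z_2^{n-1}$ is a copy of $H_{n-1}$, and that the only edges between the two copies are the \emph{cross edges} joining $0x$ and $1\phi(x)$. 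In particular every vertex $0x$ has exactly one neighbor outside its own copy, namely $1\phi(x)$, and every vertex $1y$ has exactly one such neighbor, namely $0\phi(y)$ (using $\phi^2=\mathrm{id}$); since $\phi$ is a permutation of $Z_2^{n-1}$, the map $x\mapsto\phi(x)$ is injective, and this injectivity is the only property of $\phi$ that the proof needs.

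I would then split according to where $u$ and $v$ lie. \textbf{Case 1: $u$ and $v$ are in the same copy}, say $u=0x$ and $v=0x'$ (the sub-case $u,v\in 1H_{n-1}$ being symmetric). A common neighbor of the form $0z$ is precisely a common neighbor $z$ of $x$ and $x'$ inside $H_{n-1}$, and by the induction hypothesis there are at most two of these. A common neighbor of the form $1z$ would have to be the unique cross-neighbor of both $0x$ and $0x'$, forcing $1\phi(x)=1z=1\phi(x')$ and hence $x=x'$, contradicting $u\ne v$; so there is none. Thus $u$ and $v$ have at most two common neighbors. \textbf{Case 2: $u$ and $v$ lie in different copies}, say $u=0x$ and $v=1y$. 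A common neighbor lying in $0H_{n-1}$ must be the unique cross-neighbor $0\phi(y)$ of $v$, so there is at most one; symmetrically, a common neighbor lying in $1H_{n-1}$ must be the unique cross-neighbor $1\phi(x)$ of $u$, so again at most one. Hence $u$ and $v$ have at most two common neighbors. Combining the two cases completes the induction.

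The argument is mostly bookkeeping, and the one point that needs genuine care is the accounting of the cross edges: the key observation is that each vertex emits exactly one edge to the opposite copy, which caps the number of common neighbors contributed by that copy at one in the mixed case and at zero in the same-copy case, and the injectivity of $\phi$ is exactly what validates this cap. No finer property of $\phi$ or of the function $\kappa(n)$ enters, and since two common neighbors already occur inside the subgraph $H_2\subseteq H_n$, the bound cannot be lowered. (Implicitly the statement is for distinct $u$ and $v$; for $u=v$ it would assert $\deg(u)\le 2$, which fails once $n\ge 3$.)
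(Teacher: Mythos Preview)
Your proof is correct. The paper does not actually prove this lemma; it is quoted from \cite{Liu} without argument, so there is no ``paper's own proof'' to compare against. Your induction on $n$ via the recursive decomposition $H_n = 0H_{n-1}\cup 1H_{n-1}$ with the perfect matching $\{0x\,\text{--}\,1\phi(x)\}$ is the natural approach, and the two cases are handled cleanly: in the same-copy case the induction hypothesis bounds the inside contribution by two and injectivity of $\phi$ kills any cross contribution; in the mixed case each side contributes at most one because each vertex has a unique cross-neighbor. Your remark that only the injectivity of $\phi$ (equivalently, that the cross edges form a perfect matching) is used, and not any specific formula for $\phi$ or $\kappa(n)$, is exactly right and shows the lemma holds for any hypercube-like network built this way.
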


\begin{lemma}  \cite{Xu10}
\label{kappa1}
$\kappa_{1}(H_n)=2n-2$, where $n\geq3$.
\end{lemma}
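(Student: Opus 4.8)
The plan is to prove the two inequalities $\kappa_1(H_n)\le 2n-2$ and $\kappa_1(H_n)\ge 2n-2$ separately. For the \emph{upper bound}, fix an edge $uv$ of $H_n$ and put $F=N_{H_n}(\{u,v\})$. Since $H_n$ is $n$-regular and, by Lemma~\ref{triangle-free}, triangle-free, $u$ and $v$ have no common neighbour, so $|F|=2(n-1)=2n-2$, and in $H_n-F$ the pair $\{u,v\}$ is a component of order $2$. To see that no other component is trivial, suppose $w\notin\{u,v\}\cup F$ is isolated in $H_n-F$; then $N_{H_n}(w)\subseteq N_{H_n}(u)\cup N_{H_n}(v)$, but by Lemma~\ref{common-neighbor} $w$ has at most two neighbours in $N_{H_n}(u)$ and at most two in $N_{H_n}(v)$, so $|N_{H_n}(w)\cap F|\le 4<n$ whenever $n\ge 5$; for $n\in\{3,4\}$ one selects a suitable edge $uv$ by inspecting Figure~1. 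Hence $F$ is a $1$-extra-cut and $\kappa_1(H_n)\le 2n-2$.

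For the \emph{lower bound}, suppose $F$ is a $1$-extra-cut with $|F|\le 2n-3$ and let $D$ be a smallest component of $H_n-F$, so $|V(D)|\ge 2$, $V(H_n)\setminus(V(D)\cup N_{H_n}(D))\ne\emptyset$, and $N_{H_n}(D)\subseteq F$; it suffices to show $|N_{H_n}(D)|\ge 2n-2$. If $|V(D)|\le 3$ this already follows from Lemma~\ref{triangle-free} and Lemma~\ref{common-neighbor}: counting the edges between $D$ and its complement and using that adjacent vertices of $D$ have no common outside neighbour while non-adjacent ones have at most two, one gets $|N_{H_n}(D)|\ge 2n-2$ (exactly $2n-2$ when $|V(D)|=2$, at least $3n-5$ when $|V(D)|=3$); in particular this settles $n=3$, where $|V(D)|=2$ is forced. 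For larger $D$ we argue by induction on $n$ via the recursive structure: write $H_n=0H_{n-1}\cup 1H_{n-1}$ with the cross edges forming a perfect matching $M$, $|M|=2^{n-1}$, and set $F_i=F\cap V(iH_{n-1})$ with $|F_0|\le|F_1|$, so $|F_0|\le n-2$ and hence $C_0:=0H_{n-1}-F_0$ is connected ($H_{n-1}$ being $(n-1)$-connected). If $|F_1|\le n-2$ as well, then $1H_{n-1}-F_1$ is connected, and since $2^{n-1}>2n-3\ge|F|$ some matching edge has both ends outside $F$, so $H_n-F$ is connected — a contradiction. Therefore $|F_1|\ge n-1$ and $|F_0|\le n-2$; a standard argument then forces $D\subseteq 1H_{n-1}-F_1$ (everything adjacent to, or lying in, $C_0$ is in one component), with the matching partner of each vertex of $D$ belonging to $F_0$, so $|V(D)|\le|F_0|\le n-2$.

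The \emph{crux} is the remaining estimate. Now $N_{H_n}(D)=N_{1H_{n-1}}(D)\cup(\text{matching partners of }D)$ is a disjoint union whose second part has size $|V(D)|$, so we must show $|N_{1H_{n-1}}(D)|+|V(D)|\ge 2(n-1)$. If $|N_{1H_{n-1}}(D)|\ge 2(n-1)-2$ this is immediate from $|V(D)|\ge 2$. Otherwise $N_{1H_{n-1}}(D)$ is a cut of $H_{n-1}\cong 1H_{n-1}$ of size below $\kappa_1(H_{n-1})=2(n-1)-2$, hence not a $1$-extra-cut, so $H_{n-1}-N_{1H_{n-1}}(D)$ — which contains $D$ and is disconnected — has an isolated vertex $z$; if $z\in D$ then $D=\{z\}$, contradicting $|V(D)|\ge 2$, and if $z\notin D$ then the entire $1H_{n-1}$-neighbourhood of $z$ lies in $F$, which forces a further vertex (the matching partner of $z$) into $F_0$. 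This last nested case — $D$ of order at most $n-2$ while its boundary inside $1H_{n-1}$ is also below $2(n-1)-2$ — is the main obstacle: the induction hypothesis only produces an isolated vertex somewhere in $H_{n-1}-N_{1H_{n-1}}(D)$, not necessarily inside $D$, so one must carefully track how the at most $n-2$ vertices of $F_0$ can simultaneously host the matching partners of $D$ and of every such would-be isolated vertex. I expect the cleanest route — and the one effectively behind \cite{Xu10} — is to bypass this bookkeeping by first establishing, for every $H_m$, the uniform bound $|N_{H_m}(A)|\ge 2m-2$ for all $A$ with $|V(A)|$ small directly from Lemma~\ref{common-neighbor} as above, and then closing the induction with the sub-cube splitting for all remaining $A$, so that the delicate case analysis collapses into a single clean inequality.
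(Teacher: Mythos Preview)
The paper does not give its own proof of this lemma: it is simply quoted from \cite{Xu10}, which establishes the result for a general class of hypercube-like networks to which $H_n$ belongs. So there is no in-paper argument to compare against.

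As for your proposal itself: it is not a proof but a plan with an explicitly acknowledged hole. Your upper bound is fine (modulo the hand-check for $n\in\{3,4\}$, which does go through). For the lower bound, your direct counting disposes of $|V(D)|\in\{2,3\}$ correctly, and your subcube splitting correctly forces $D\subseteq 1H_{n-1}$ with $|V(D)|\le |F_0|\le n-2$. But then you stop: you write ``this last nested case \ldots\ is the main obstacle'' and ``I expect the cleanest route \ldots'' without actually executing it. Concretely, when $|N_{1H_{n-1}}(D)|<2(n-1)-2$, the isolated vertex $z$ produced by the induction hypothesis need \emph{not} have its matching partner in $F_0$ (your claim to that effect is unjustified: $z$ could be joined to the big component through $z^1\in 0H_{n-1}\setminus F_0$, which is perfectly consistent with $z\notin D$), so the argument as written does not close.

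The fix you hint at is the right one and is essentially what \cite{Xu10} does for the whole class: prove, by induction on $n$, the stronger statement that every connected $A\subseteq V(H_n)$ with $2\le |A|$ and $V(H_n)\setminus(A\cup N(A))\ne\emptyset$ satisfies $|N_{H_n}(A)|\ge 2n-2$. The base cases $|A|\in\{2,3\}$ are exactly your counts from Lemmas~\ref{triangle-free} and~\ref{common-neighbor}; for the inductive step the subcube splitting gives $|N_{H_n}(A)|=|N_{1H_{n-1}}(A)|+|A|$ with $|A|\ge 2$, and either $|N_{1H_{n-1}}(A)|\ge 2(n-1)-2$ by induction, or $N_{1H_{n-1}}(A)$ is a (non--$1$-extra) cut of $H_{n-1}$ of size at least $\kappa(H_{n-1})=n-1$, and the singleton it isolates must be $A$ itself or lie outside $A\cup N_{1H_{n-1}}(A)$ --- in either case a short check finishes. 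You need to actually write this out rather than gesture at it.
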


\begin{lemma}  \cite{Chang13}
\label{kappa2}
$\kappa_{2}(H_n)=3n-5$, where $n\geq5$.
\end{lemma}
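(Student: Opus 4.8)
\medskip
\noindent\textbf{Proof proposal.} The plan is to prove the two inequalities $\kappa_2(H_n)\le 3n-5$ and $\kappa_2(H_n)\ge 3n-5$ separately. For the upper bound I would exhibit an explicit $2$-extra cut of size $3n-5$. Using the recursive decomposition $H_n=0H_{n-1}\cup 1H_{n-1}\cup M$, where $M$ is the perfect matching joining $0x$ to $1\phi(x)$, pick inside $0H_{n-1}$ (a copy of $H_{n-1}$, which contains a $4$-cycle since $H_2=C_4$ embeds in $H_m$ for all $m\ge 2$) a $4$-cycle $\langle a,b,c,d,a\rangle$, and set $P=\langle a,b,c\rangle$; since $H_n$ is triangle-free (Lemma \ref{triangle-free}), $H_n[\{a,b,c\}]\cong P_3$. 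Put $F=N_{H_n}(\{a,b,c\})$. A short inclusion--exclusion count, using triangle-freeness (adjacent vertices share no neighbour) together with Lemma \ref{common-neighbor} (so $a$ and $c$ have at most two common neighbours, hence exactly the two $b,d$), gives $|F|=(n-1)+(n-2)+(n-1)-1=3n-5$; equivalently, $F$ consists of the $3(n-1)-5$ vertices of $N_{0H_{n-1}}(\{a,b,c\})$ together with the three matching partners of $a,b,c$ in $1H_{n-1}$. Then $\{a,b,c\}$ is a component of $H_n-F$ of order $3$, and the rest is connected: $1H_{n-1}$ minus three vertices is connected because $\kappa(H_{n-1})=n-1\ge 4$ for $n\ge 5$, and every surviving vertex of $0H_{n-1}$ reaches it along its matching edge, since the three deleted vertices of $1H_{n-1}$ are precisely the partners of $a,b,c$, none of which survives in $0H_{n-1}$. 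Hence $F$ is a $2$-extra cut and $\kappa_2(H_n)\le 3n-5$.

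For the lower bound I would argue by contradiction: suppose $F\subseteq V(H_n)$ with $|F|\le 3n-6$ and $H_n-F$ disconnected with every component of order $\ge 3$. Let $C$ be a smallest component; then $3\le |C|<2^{n-1}$ and $N_{H_n}(C)\subseteq F$, so it suffices to show $|N_{H_n}(C)|\ge 3n-5$. If $|C|=3$ then $H_n[C]\cong P_3$, and if $|C|=4$ then $H_n[C]\in\{P_4,C_4,K_{1,3}\}$ (the only connected triangle-free graphs on four vertices); in each case a direct inclusion--exclusion estimate of $|N_{H_n}(C)|$ using Lemmas \ref{triangle-free} and \ref{common-neighbor} gives $|N_{H_n}(C)|\ge 3n-5$ (the $P_3$ gives exactly $3n-5$, and among the four-vertex cases the binding one is $P_4$, for which the estimate is $4n-10$, which is $\ge 3n-5$ precisely when $n\ge 5$ --- this is where the hypothesis $n\ge 5$ enters). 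For $|C|\ge 5$ I would induct on $n$, with base case $n=5$ verified directly (using $\kappa(H_4)=4$ and $\kappa_1(H_4)=6$ from Lemma \ref{kappa1}). In the inductive step, write $C_i=C\cap V(iH_{n-1})$; since $N_{0H_{n-1}}(C_0)$ and $N_{1H_{n-1}}(C_1)$ lie in different copies they are disjoint subsets of $N_{H_n}(C)$, so $|N_{H_n}(C)|\ge |N_{0H_{n-1}}(C_0)|+|N_{1H_{n-1}}(C_1)|$; bounding each term by the inductive hypothesis $\kappa_2(H_{n-1})=3(n-1)-5$ when the corresponding $C_i$ is connected, of order $\ge 3$, and does not nearly fill its copy, and handling the degenerate situations (some $C_i$ empty, dominating its copy, or all but a few vertices of it --- cases in which $|N_{H_n}(C)|$ is of order $2^{n-1}$ and dwarfs $3n-5$) via the trivial bounds and $\kappa(H_{n-1})$, $\kappa_1(H_{n-1})$, yields $|N_{H_n}(C)|\ge 3n-5$ in all cases, contradicting $|N_{H_n}(C)|\le |F|\le 3n-6$. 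Hence $\kappa_2(H_n)\ge 3n-5$.

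I expect the lower bound, and specifically the case $|C|\ge 5$, to be the main obstacle. The three- and four-vertex cases are routine neighbourhood counting, but for larger $C$ one must split $C$ across the two $H_{n-1}$-copies and bound the boundary contributed by each half, which requires a somewhat delicate case analysis: the halves $C_0,C_1$ may have very different sizes, one may be empty, one may (nearly) dominate or fill its copy, and the matching $M$ simultaneously contributes extra boundary vertices and may be the only link keeping $H_n-F$ connected. Keeping this bookkeeping tight enough that the estimate never drops below $3n-5$, together with a clean treatment of the base case $n=5$ (where $\kappa_2(H_4)$ is not yet available), is where the real work lies.
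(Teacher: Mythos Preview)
The paper does not prove this lemma at all: it is quoted verbatim as a known result from \cite{Chang13} (Chang and Hsieh's $\{2,3\}$-extraconnectivity paper for hypercube-like networks), and is used only as a black box in the proofs of Lemmas~\ref{lower}, \ref{P2-lower} and~\ref{Pk-lower}. So there is no ``paper's own proof'' to compare your proposal against; you have supplied an argument where the authors simply cite one.

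That said, your sketch is along the right lines for how such results are actually established, and the upper bound half is essentially complete and correct: the $P_3$ inside a $4$-cycle, the count $|N(\{a,b,c\})|=3n-5$, and the connectivity of the complement via the matching and $\kappa(H_{n-1})\ge 4$ are all sound. For the lower bound, your three- and four-vertex cases are fine, but the inductive step for $|C|\ge 5$ is only outlined. The genuine difficulty you correctly flag --- that $C_0$ and $C_1$ need not be connected, one may be empty or nearly fill its copy, and the matching edges both help and hurt --- is exactly where \cite{Chang13} spends its effort; their argument handles all hypercube-like graphs simultaneously and does not reduce to a one-line induction. If you want a self-contained proof you will need to flesh out that case analysis (in particular, when $C_i$ is disconnected inside $iH_{n-1}$ you cannot directly invoke the inductive $\kappa_2$ bound on it), or else appeal to the general theorem in \cite{Chang13} as the present paper does.
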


Let $a\in \{0,1\}$ be an integer and $\overline{a}$ the complement of $a$, i.e., $\overline{a}=1-a$.

Let $u=u_1u_2\cdots u_n\in V(H_n)$. By the definition of the twisted hypercubes,  $u$ has only one neighbor $\overline{u_1}\phi(u_{2}u_{3} \cdots u_{n})$ in $\overline{u_1}H_{n-1}$,
and $n-1$ neighbors in $u_1H_{n-1}$. For every neighbor $w=w_1w_2\cdots w_n$ of $u$ in $u_1H_{n-1}$, there exists some $q~(1\leq q\leq n-1)$ such that $w_i=u_i$ for $1\leq i \leq q$. We use $u^i$ to denote the neighbor of $u$ with the same first $i-1$ bits as that of $u$, and $u^i$ is called the $(i)$-neighbor of $u$ , where $1\leq i\leq n$. That is, $u^i=u_1 \cdots u_{i-1} \overline{u_i}\phi(u_{i+1}\cdots u_n)=u_1 \cdots u_{i-1} \overline{u_i} (u_{i+1}\oplus u_{n-\kappa+1}) (u_{i+2}\oplus u_{n-\kappa+2}) \cdots (u_{i+\kappa}\oplus u_{n}) u_{i+\kappa+1} \cdots u_n$, where $\kappa=\kappa(n-i)$.

Note that there are $n-i$ neighbors of $u^{i}$ in $u_1 \cdots u_{i-1}\overline{u_i}H_{n-i}$, we use $u^{i,j}$ to denote the neighbor of $u^{i}$  with the same first $i+j-1$ bits as that of $u^i$, and $u^{i,j}$ is called the $(i,j)$-neighbor of $u$, where $1\leq j\leq n-i$. Denote $\kappa_i=\kappa(n-i)$. Then
$$u^{i,j}=\left\{
\begin{array}{ll}
u^i[1,i](u_{i+1}\oplus u_{n-\kappa_i+1})\cdots (u_{i+j-1}\oplus u_{n-\kappa_i+j-1})(\overline{u_{i+j}\oplus u_{n-\kappa_i+j}})\phi(u^i[{i+j+1},n]),\\~~~~~~~~~~~~~~~~~~~~~~~~~~~~~~~~~~~~~~~~~~~~~~
~~~~~~~~~~~~~~~~~~~~~~~~~~~~ \mbox{ \mbox{~if~} $1\leq j \leq \kappa_i$,}\\
u^i[1,i](u_{i+1}\oplus u_{n-\kappa_i+1})  \cdots (u_{i+\kappa_i}\oplus u_{n}) u_{i+\kappa_i+1} \cdots u_{i+j-1} \overline{u_{i+j}}\phi(u_{i+j+1} \cdots u_n),\\ ~~~~~~~~~~~~~~~~~~~~~~~~~~~~~~~~~~~~~~~~~~~~~~~
~~~~~~~~~~~~~~~~~~~~~~~~~~~~ \mbox{~if~$\kappa_i+1\leq j \leq n-i$.}\end{array}
\right.
$$
Specially,
$$u^{i,1}=\left\{
\begin{array}{ll}
u_1 \cdots u_{i-1}\overline{u}_{i} (\overline{u_{i+1}\oplus u_{n}})(u_{i+2}\oplus u_n)u_{i+3} \cdots u_{n},& \mbox{ \mbox{~if~} $\kappa_i=\kappa_{i+1}=1$,}\\
u_1 \cdots u_{i-1}\overline{u}_{i} (\overline{u_{i+1}\oplus u_{n-\kappa+1}}) (u_{i+2}\oplus u_{n-\kappa+2}\oplus u_{n-\kappa+1})\\
~~~~~\cdots (u_{i+\kappa}\oplus u_{n}\oplus u_{n-1})(u_{i+\kappa+1}\oplus u_{n}) u_{i+\kappa+2} \cdots u_{n},  & \mbox{~if~$\kappa_i=\kappa_{i+1}=\kappa\geq 2$,}\\
u_1 \cdots u_{i-1}\overline{u}_{i} (\overline{u_{i+1}\oplus u_{n-\kappa+1}}) u_{i+2} \cdots u_{n},  & \mbox{~if~$\kappa_i=\kappa_{i+1}+1=\kappa$.}\end{array}
\right.
$$

Similarly, there are $n-i-j$ neighbors of $u^{i,j}$ in $u_1 \cdots u_{i-1} \overline{u_i}(u_{i+1}\oplus u_{n-\kappa_i+1})\cdots (u_{i+j-1}\oplus u_{n-\kappa_i+j-1})(\overline{u_{i+j}\oplus u_{n-\kappa_i+j}})H_{n-i-j}$ if $1\leq j\leq \kappa_i$, in $u_1 \cdots u_{i-1} \overline{u_i} (u_{i+1}\oplus u_{n-\kappa_i+1})  \cdots (u_{i+\kappa_i}\oplus u_{n}) u_{i+\kappa_i+1} \cdots u_{i+j-1} \overline{u_{i+j}}H_{n-i-j}$ if  $\kappa_i+1\leq j \leq n-i$. We use $u^{i,j,k}$ to denote the neighbor of $u^{i,j}$ with the same first $i+j+k-1$ bits as that of $u^{i,j}$, and $u^{i,j,k}$ is called the $(i,j,k)$-neighbor of $u$, where $1\leq k\leq n-i-j$. Specially,
$$u^{i,1,1}=\left\{
\begin{array}{ll}
u_1 \cdots u_{i-1}\overline{u_{i}} (\overline{u_{i+1}\oplus u_{n}}) (\overline{u_{i+2}\oplus u_{n}})
 \phi(u_{i+3} \cdots u_{n}),
~~~~~~~ \mbox{ \mbox{~if~} $\kappa_i=\kappa_{i+1}=1$,}\\
u_1\cdots u_{i-1}\overline{u_{i}} (\overline{u_{i+1}\oplus u_{n-1}}) (\overline{u_{i+2}\oplus u_n\oplus u_{n-1}})
 \phi( (u_{i+3}\oplus u_n) u_{i+4}\cdots u_{n}),\\~~~~~~~~~~~~~~~~~~~~~~~~~~~~~~~~~~~~~
~~~~~~~~~~~~~~~~~~~~~~~~~~~~~~~~~\mbox{ \mbox{~if~} $\kappa_i=\kappa_{i+1}=2$,}\\
 u_1\cdots u_{i-1}\overline{u_{i}} (\overline{u_{i+1}\oplus u_{n-\kappa+1}}) (\overline{u_{i+2}\oplus u_{n-\kappa+2}\oplus u_{n-\kappa+1}})\phi(u^{i,1}[i+3,n]),\\ ~~~~~~~~~~~~~~~~~~~~~~~~~~~~~~~~~~~~~
 ~~~~~~~~~~~~~~~~~~~~~~~~~~~~~~~~~\mbox{ \mbox{~if~} $\kappa=\kappa_i=\kappa_{i+1}\ge 3$,}\\
u_1 \cdots u_{i-1}\overline{u_{i}} (\overline{u_{i+1}\oplus u_{n-\kappa+1}}) \overline{u_{i+2}} \phi(u_{i+3} \cdots u_{n}), ~~~~~~~~~~ \mbox{ \mbox{~if~} $\kappa_i=\kappa_{i+1}+1=\kappa. $}
\end{array}
\right.
$$
At the same time,
 $$u^{i,1,2}=\left\{
\begin{array}{ll}
u_1 \cdots u_{i-1}\overline{u_{i}} (\overline{u_{i+1}\oplus u_{n}}) (u_{i+2}\oplus u_{n})
\overline{ u_{i+3}}\phi(u_{i+4} \cdots u_{n}), ~~\mbox{ \mbox{~if~} $\kappa_i=\kappa_{i+1}=1$,}\\
u_1 \cdots u_{i-1}\overline{u_{i}} (\overline{u_{i+1}\oplus u_{n-1}}) (u_{i+2}\oplus u_n\oplus u_{n-1})(\overline{u_{i+3}\oplus u_n})\phi(u_{i+4}\cdots u_n),\\~~~~~~~~~~~~~~~~~~~~~~~~~~~~~~~~~~~~~
~~~~~~~~~~~~~~~~~~~~~~~~~~~~~~~~~\mbox{ \mbox{~if~} $\kappa_i=\kappa_{i+1}=2$,}\\
u_1\cdots u_{i-1}\overline{u_{i}} (\overline{u_{i+1}\oplus u_{n-2}}) (u_{i+2}\oplus u_{n-1}\oplus u_{n-2}) (\overline{u_{i+3}\oplus u_{n}\oplus u_{n-1}})\phi(u^{i,1}[i+4,n]) ,\\ ~~~~~~~~~~~~~~~~~~~~~~~~~~~~~~~~~~~~~
 ~~~~~~~~~~~~~~~~~~~~~~~~~~~~~~~~~\mbox{ \mbox{~if~} $\kappa_i=\kappa_{i+1}=3$,}\\
u_1\cdots u_{i-1}\overline{u_{i}} (\overline{u_{i+1}\oplus u_{n-\kappa+1}}) (u_{i+2}\oplus u_{n-\kappa+2}\oplus u_{n-\kappa+1}) (\overline{u_{i+3}\oplus u_{n-\kappa+3}\oplus u_{n-\kappa+2}})\\
~~\phi( (u_{i+4}\oplus u_{n-\kappa+4}\oplus u_{n-\kappa+3})\cdots (u_{i+\kappa}\oplus u_{n}\oplus u_{n-1})(u_{i+\kappa+1}\oplus u_{n})u_{i+\kappa+2} \cdots u_{n}),\\ ~~~~~~~~~~~~~~~~~~~~~~~~~~~~~~~~~~~~~
 ~~~~~~~~~~~~~~~~~~~~~~~~~~~~~~~~~\mbox{ \mbox{~if~} $\kappa=\kappa_i=\kappa_{i+1}\ge 4$,}\\
u_1 \cdots u_{i-1}\overline{u_{i}} (\overline{u_{i+1}\oplus u_{n-\kappa+1}}) u_{i+2} \overline{u_{i+3}}\phi(u_{i+4}\cdots u_{n}),~~~~~\mbox{ \mbox{~if~} $\kappa_i=\kappa_{i+1}+1=\kappa. $}
\end{array}
\right.
$$

Let $u^{h(\cdot)}$ be a vertex of $H_n$. If the first $i$ bits of $u^{h(\cdot)}$ is $u_1u_2\cdots u_i$, then $u^{h(\cdot)}$ has exactly one neighbor in  $u_1u_2 \cdots u_{i-1}\overline{u_i}H_{n-i}$, and we denote this neighbor $u^{h(\cdot),i^{*}}$. For example, let $u=0010\in H_4$, then $u^{1}=1010, u^{2}=0110, u^{3}=0000, u^{4}=0011, u^{1,1}=u^{2,1^{*}}=1110, u^{2,1}=0100, u^{3,1}=0001, u^{3,1^{*}}=1000, u^{4,1^{*}}=1111, u^{2,1,1^{*}}=1100, u^{3,1,1^{*}}=1101.$ Figure  2  illustrates the neighbors of $u$.

\begin{figure}[!htb]
\centering
{\includegraphics[height=0.6\textwidth]{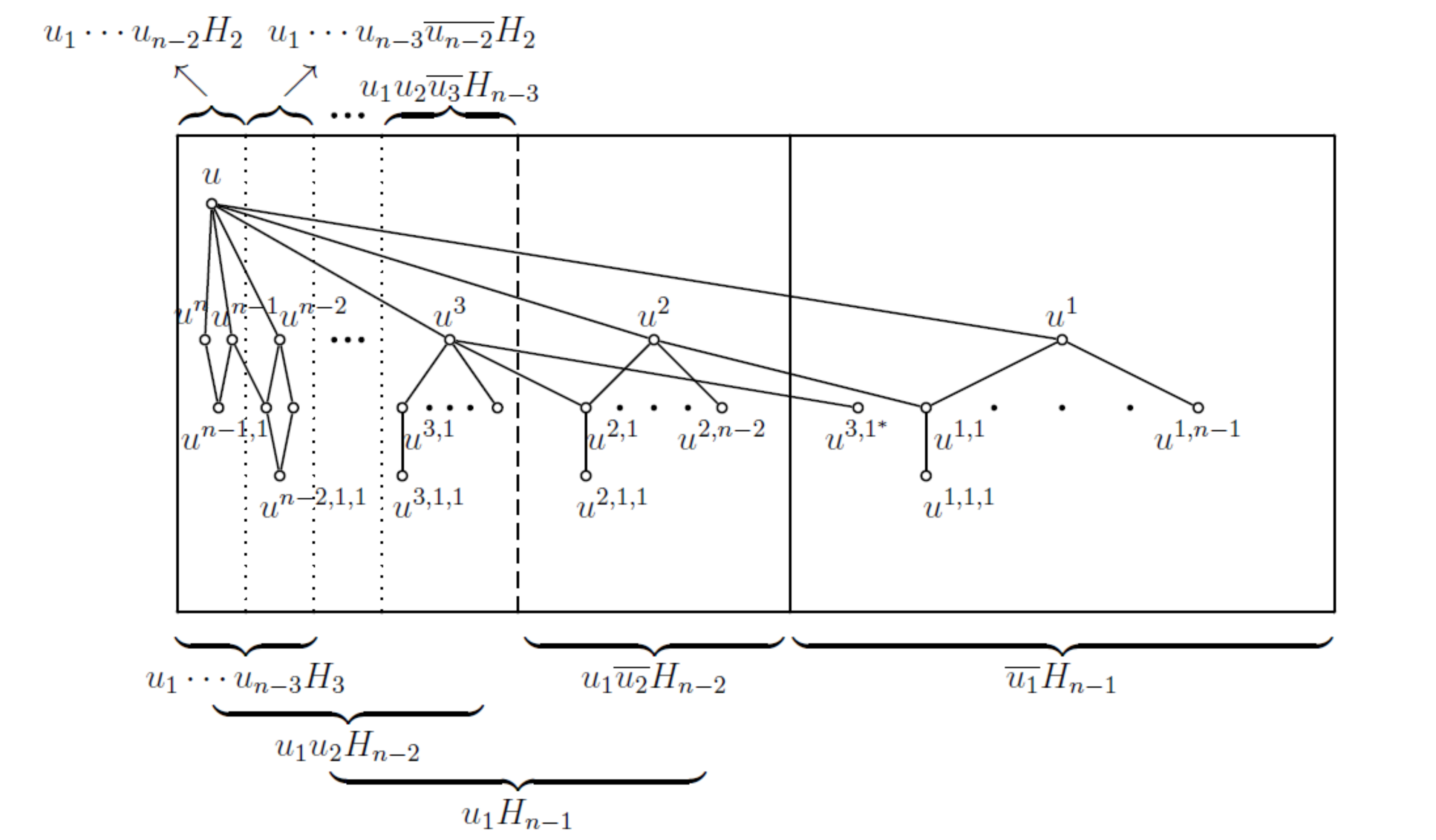}}

Figure 2. ~Some neighbors related to a vertex $u$ of $H_n$
\end{figure}

Note that $u^{i+1}=u_1u_2 \cdots u_{i} \overline{u_{i+1}} \phi(u_{i+2}u_{i+3} \cdots u_{n})$. Then
$$u^{i+1,i^*}=\left\{
\begin{array}{ll}
u_1 \cdots u_{i-1}\overline{u}_{i} (\overline{u_{i+1}\oplus u_{n}})(u_{i+2}\oplus u_n)u_{i+3} \cdots u_{n},& \mbox{ \mbox{~if~} $\kappa_i=\kappa_{i+1}=1$,}\\
u_1 \cdots u_{i-1}\overline{u}_{i} (\overline{u_{i+1}\oplus u_{n-\kappa+1}}) (u_{i+2}\oplus u_{n-\kappa+1}\oplus u_{n-\kappa+2})\\
~~~~~\cdots (u_{i+\kappa}\oplus u_{n-1}\oplus u_{n})(u_{i+\kappa+1}\oplus u_{n}) u_{i+\kappa+2} \cdots u_{n},  & \mbox{~if~$\kappa_i=\kappa_{i+1}=\kappa\geq 2$,}\\
u_1 \cdots u_{i-1}\overline{u}_{i} (\overline{u_{i+1}\oplus u_{n-\kappa+1}}) u_{i+2} \cdots u_{n},  & \mbox{~if~$\kappa_i=\kappa_{i+1}+1=\kappa$.}\end{array}
\right.
$$

It is seen that $u^{i+1,i^*}=u^{i,1}$. Then $u^{i,1}$ is a common neighbor of $u^i$ and $u^{i+1}$. Note that $u$ is also a common neighbor of $u^i$ and $u^{i+1}$. By Lemma~\ref{common-neighbor}, any two vertices have at most two common neighbors. So we have the following proposition.

\begin{proposition}
\label{neighbor}
Let $u=u_1u_2\cdots u_n$ be any vertex of $H_n$ and $N_{H_n}(u)=\{u^{1},u^{2},\ldots,u^{n}\}$. Then $N_{H_n}(u^{i})\cap N_{H_n}(u^{i+1})=\{u,u^{i,1} \}$, where $1\leq i \leq n-1$.
\end{proposition}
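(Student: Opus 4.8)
The plan is to exhibit two distinct vertices that are common neighbors of $u^i$ and $u^{i+1}$ and then invoke Lemma~\ref{common-neighbor} to rule out a third one. The first common neighbor is $u$ itself: by the definition of the $(i)$-neighbor and the $(i+1)$-neighbor we have $u^i,u^{i+1}\in N_{H_n}(u)$, hence $u\in N_{H_n}(u^i)\cap N_{H_n}(u^{i+1})$. The second is the $(i,1)$-neighbor $u^{i,1}$, which is a neighbor of $u^i$ by definition; so the crux is to check that $u^{i,1}\in N_{H_n}(u^{i+1})$ as well.

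For the crux I would use the identity $u^{i,1}=u^{i+1,i^{*}}$, where $u^{i+1,i^{*}}$ denotes the unique neighbor of $u^{i+1}$ lying in the subcube $u_1\cdots u_{i-1}\overline{u_i}H_{n-i}$. Once this identity is known, $u^{i,1}=u^{i+1,i^{*}}\in N_{H_n}(u^{i+1})$ is immediate from the construction of $u^{i+1,i^{*}}$. Both $u^{i,1}$ and $u^{i+1,i^{*}}$ have explicit closed forms, written out above according to the three regimes $\kappa_i=\kappa_{i+1}=1$, $\kappa_i=\kappa_{i+1}=\kappa\geq 2$, and $\kappa_i=\kappa_{i+1}+1=\kappa$; in each regime one compares the two strings position by position — both keep the prefix $u_1\cdots u_{i-1}$, complement the $i$th bit, and then perform the same sequence of XOR-with-a-later-coordinate operations prescribed by $\phi$ on length-$(n-i)$ strings — and reads off that they coincide. (The remaining degenerate case $i=n-1$, where $\phi$ acts trivially on the length-one tail, is checked by hand.)

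It then remains to assemble the pieces. Since the $i$th bit of $u^{i,1}$ is $\overline{u_i}$ while that of $u$ is $u_i$, we get $u\neq u^{i,1}$, so $\{u,u^{i,1}\}$ is a set of two distinct common neighbors of $u^i$ and $u^{i+1}$. As $u^i$ and $u^{i+1}$ are themselves distinct (they already differ in coordinate $i$), Lemma~\ref{common-neighbor} yields $|N_{H_n}(u^i)\cap N_{H_n}(u^{i+1})|\leq 2$, whence $N_{H_n}(u^i)\cap N_{H_n}(u^{i+1})=\{u,u^{i,1}\}$, as claimed.

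The main obstacle is the bit-level verification of $u^{i,1}=u^{i+1,i^{*}}$: it is purely computational but delicate, since the pattern of $\oplus$-terms introduced by $\phi$ changes with the value of $\kappa=\kappa(n-i)$, and one is reconciling two genuinely different descriptions of the same vertex — one obtained as the $(1)$-neighbor inside the copy of $H_{n-i}$ containing $u^i$, the other obtained as the lone cross-subcube neighbor $i^{*}$ of $u^{i+1}$. This reconciliation is exactly what the displayed formulas immediately preceding the proposition carry out, so in the write-up I would cite those formulas and record the equality in each of the three cases.
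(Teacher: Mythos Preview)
Your proposal is correct and follows essentially the same approach as the paper: the paper also establishes the identity $u^{i+1,i^{*}}=u^{i,1}$ by comparing the explicit bit-formulas displayed immediately before the proposition, then concludes that $u$ and $u^{i,1}$ are both common neighbors of $u^{i}$ and $u^{i+1}$ and invokes Lemma~\ref{common-neighbor} to rule out any further common neighbor. Your write-up is slightly more careful in recording $u\neq u^{i,1}$ and $u^{i}\neq u^{i+1}$, but the argument is the same.
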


Recall that for $u=u_1\cdots u_n\in V(H_n)$, $u^{n}=u_1\cdots u_{n-1} \overline{u_n}$, $u^{n-1}=u_1\cdots u_{n-2}\overline {u_{n-1}} u_n$ and $u^{n-1,1}=u_1u_2 \cdots u_{n-2} \overline{u_{n-1}} ~\overline{u_n},$ we have
\begin{eqnarray*}u^{n,i^*}&=&u_1\cdots u_{i-1}\overline{u_i}\phi(u_{i+1}\cdots u_{n-1} \overline{u_n}),\\
u^{n-1,1,1^*}&=&\overline{u_1}\phi(u_2\cdots u_{n-2} \overline{u_{n-1}}~\overline{u_n}),\\
u^{n-1,1,2^*}&=&u_1\overline{u_2}\phi(u_3\cdots u_{n-2} \overline{u_{n-1}}~\overline{u_n}).
\end{eqnarray*}

The following two propositions are straightforward.

\begin{proposition}
\label{vertex}
For any $u=u_1u_2 \ldots u_n\in V(H_n)$, $N_{H_n}(u)=\{u^1,u^2,\ldots,u^n\}$, Then

(a) $u^{i,1}\neq u^{j,1}$ for $1\leq i \neq j \leq n-1$.

(b) $u^{n-1,1,1^{*}}\neq u^{1,1}$, $u^{n-1,1,1^{*}}\neq u^{1,1,1}$ and $u^{n-1,1,1^{*}}\neq u^{1,1,2}$.

(c) $u^{n-1,1,2^{*}}\neq u^{2,1}$, $u^{n-1,1,2^{*}}\neq u^{2,1,1}$ and $u^{n-1,1,2^{*}}\neq u^{2,1,2}$.

(d) $u^{n,i^{*}}\neq u^{i,1}$, $u^{n,i^{*}}\neq u^{i,1,1}$ and $u^{n,i^{*}}\neq u^{i,1,2}$, where $1\leq i \leq n-3$.
\end{proposition}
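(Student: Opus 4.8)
All four assertions say that two specific binary strings of length $n$, both written out coordinate-wise in the formulas of Section~2, are distinct. Part (a) admits a short structural proof from the common-neighbour bound. Suppose $u^{i,1}=u^{j,1}=:v$ with $1\le i<j\le n-1$. By Proposition~\ref{neighbor}, $v$ is a common neighbour of $u^{i}$ and $u^{i+1}$ and also of $u^{j}$ and $u^{j+1}$, and $v\neq u$. Since $u$ is adjacent to each of $u^{i},u^{i+1},u^{j},u^{j+1}$, the pair $u,v$ has all of these among its common neighbours; as $i<j$ this is a set of at least three distinct vertices ($\{u^{i},u^{i+1},u^{i+2}\}$ when $j=i+1$, four vertices when $j\ge i+2$), which contradicts Lemma~\ref{common-neighbor}. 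Hence (a) holds.

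For (b), (c) and (d) I would argue by comparing the two strings coordinate by coordinate, splitting along the three cases of the formulas for $u^{i,1}$, $u^{i,1,1}$, $u^{i,1,2}$ (namely $\kappa_i=\kappa_{i+1}=1$, or $\kappa_i=\kappa_{i+1}\ge 2$, or $\kappa_i=\kappa_{i+1}+1$, where $\kappa_i=\kappa(n-i)$). The recurring device is the elementary inequality $\kappa(m)\le m-2$ for $m\ge 3$ (with $\kappa(2)=1$, $\kappa(1)=0$), which forces $\phi$ to leave the last bit of any string untouched. Thus the final coordinate of $u^{n-1,1,1^{*}}=\overline{u_1}\phi(u_2\cdots u_{n-2}\overline{u_{n-1}}\,\overline{u_n})$ equals $\overline{u_n}$, while the final coordinate of each of $u^{1,1}$, $u^{1,1,1}$, $u^{1,1,2}$ sits in its untwisted tail and equals $u_n$; so coordinate $n$ separates each pair in (b), and the same works in (c) for $u^{n-1,1,2^{*}}$ against $u^{2,1},u^{2,1,1},u^{2,1,2}$. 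Likewise in (d) the final coordinate of $u^{n,i^{*}}=u_1\cdots u_{i-1}\overline{u_i}\phi(u_{i+1}\cdots u_{n-1}\overline{u_n})$ is $\overline{u_n}$, whereas the final coordinate of $u^{i,1},u^{i,1,1},u^{i,1,2}$ is $u_n$ throughout the generic range of $i$, so coordinate $n$ again does the job.

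The main difficulty is the boundary bookkeeping, not any single identity. When $i$ is close to $n$ (so that, for the shortest $\phi$-blocks, a flipped bit or the twisted block of $\phi$ occupies coordinate $n$) or when $\kappa$ takes its small degenerate values, coordinate $n$ no longer discriminates and one must compare an appropriately chosen earlier coordinate instead; within each of the three cases this is a one-line check, but there are several such cases and they must all be recorded. Arranging the argument so that coordinate $n$ settles everything in the generic range and listing the finitely many boundary indices separately is what keeps the verification short and complete.
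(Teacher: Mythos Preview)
The paper offers no proof beyond declaring the proposition ``straightforward,'' so your plan of reading off coordinates from the explicit Section~2 formulas is exactly the intended route, and your structural argument for (a) via Proposition~\ref{neighbor} and Lemma~\ref{common-neighbor} is a clean alternative to a raw bit comparison.

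The gap is not in your method but in the statement: the boundary cases you defer as ``one-line checks'' cannot all be completed, because parts (b)--(d) are actually false at the extreme indices. For (d) with $i=n-3$ and any $u$ with $u_n=0$, the $\kappa_{n-3}=\kappa_{n-2}=1$ formulas give
\[
u^{n,(n-3)^{*}}=u_1\cdots u_{n-4}\,\overline{u_{n-3}}\,(u_{n-2}\oplus\overline{u_n})\,u_{n-1}\,\overline{u_n},
\qquad
u^{n-3,1,2}=u_1\cdots u_{n-4}\,\overline{u_{n-3}}\,(\overline{u_{n-2}\oplus u_n})\,(u_{n-1}\oplus u_n)\,\overline{u_n},
\]
and since $u_{n-2}\oplus\overline{u_n}=\overline{u_{n-2}\oplus u_n}$ these coincide whenever $u_n=0$ (e.g.\ $n=6$, $u=000000$ yields $u^{6,3^{*}}=001101=u^{3,1,2}$). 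Likewise for (b) with $n=4$, $u=0001$ one computes $u^{3,1,1^{*}}=1010=u^{1,1,2}$, and for (c) with $n=5$, $u=00001$ one computes $u^{4,1,2^{*}}=01010=u^{2,1,2}$. Your last-coordinate argument is valid and does establish the three parts once the ranges are trimmed (say (d) for $1\le i\le n-4$, (b) for $n\ge 5$, (c) for $n\ge 6$); it fails precisely where the flipped bit of $u^{i,1,2}$ lands on coordinate $n$, and there the assertion itself is false. This does not harm the applications in Lemmas~\ref{3-upper} and~\ref{4-upper}, since a structure-cut need not consist of vertex-disjoint stars, but it does mean the proposition as written cannot be proved.
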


\begin{proposition}
\label{path}
For any $u=u_1u_2 \ldots u_n\in V(H_n)$, $N_{H_n}(u)=\{u^1,u^2,\ldots,u^n\}$, then
\begin{eqnarray}
u^{n,(n-2)^{*}}&\neq&u^{n-2,1},\nonumber
\\u^{n,(n-2)^{*},(n-3)^{*}}&\neq&u^{n-3,1},\nonumber
\\&\vdots&\nonumber
\\u^{n,(n-2)^{*},(n-3)^{*},\ldots,2^{*}}&\neq&u^{2,1},\nonumber
\\u^{n,(n-2)^{*},(n-3)^{*},\ldots,2^{*},1^{*}}&\neq&u^{1,1}.\nonumber
\end{eqnarray}
\end{proposition}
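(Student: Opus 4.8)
For $1\leq i\leq n-2$ set $w_i:=u^{n,(n-2)^*,(n-3)^*,\ldots,i^*}$, so that the $i$-th line of the displayed chain is exactly the statement $w_i\neq u^{i,1}$ (the first line being $w_{n-2}\neq u^{n-2,1}$ and the last being $w_1\neq u^{1,1}$). The plan is to compute the last bit (the $n$-th coordinate) of $w_i$ and of $u^{i,1}$ separately and show that these two bits are complementary; the two vertices then disagree in coordinate~$n$ and hence are distinct.

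I would first track only the last bit of $w_i$. Passing from $w_{j+1}$ to $w_j=(w_{j+1})^{j^*}$, the description of the $j^*$-neighbour coming from the recursive construction of $H_n$ flips bit~$j$ and replaces the last $n-j$ bits of $w_{j+1}$ by their image under the permutation~$\phi$ on $Z^{n-j}_2$. Since $\phi(x)[\kappa(m)+1,m]=x[\kappa(m)+1,m]$ and, directly from the definition of~$\kappa$, one has $\kappa(m)\leq m-1$ for every $m\geq1$, the permutation~$\phi$ on $Z^{m}_2$ fixes the $m$-th bit; hence every step of the chain leaves the last coordinate unchanged. Setting $w_{n-1}:=u^n=u_1\cdots u_{n-1}\overline{u_n}$, whose last bit is $\overline{u_n}$, a downward induction on $i$ then gives that the last bit of $w_i$ equals $\overline{u_n}$ for all $1\leq i\leq n-1$.

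Next I would read the last bit of $u^{i,1}$ off the three case-formulas for $u^{i,1}$ established earlier in this section, according as $\kappa_i=\kappa_{i+1}=1$, $\kappa_i=\kappa_{i+1}=\kappa\geq2$, or $\kappa_i=\kappa_{i+1}+1=\kappa$. In each case the string for $u^{i,1}$ ends with a block $u_\ell u_{\ell+1}\cdots u_n$ of bits of $u$ left unmodified, with $\ell=i+3$, $\ell=i+\kappa+2$ and $\ell=i+2$ respectively, so its last bit equals $u_n$ as soon as this block is nonempty, that is $\ell\leq n$. The assumption $i\leq n-2$ handles the third case; the condition $\kappa_{i+1}=1$, which forces $n-i-1\geq2$, handles the first; and the elementary bound $\kappa(m)\leq m-2$ for $m\geq3$ (so that $\kappa(n-i)\geq2$ forces $i+\kappa(n-i)+1<n$) handles the second. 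Hence $u^{i,1}$ ends in $u_n$; comparing with the previous paragraph, $w_i$ and $u^{i,1}$ differ in coordinate~$n$, so $w_i\neq u^{i,1}$, which is the assertion of Proposition~\ref{path}.

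The only step needing real, if routine, care is this last boundary bookkeeping: in every case of the formula for $u^{i,1}$ one must check that coordinate~$n$ lies strictly to the right of all positions $i+1,\ldots,i+\kappa+1$ at which a nontrivial exclusive-or is applied, and this is precisely where the two elementary inequalities $\kappa(m)\leq m-1$ and $\kappa(m)\leq m-2$ (for $m\geq3$) are invoked.
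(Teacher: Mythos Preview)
Your argument is correct. The paper itself does not supply a proof of this proposition (it is declared ``straightforward'' just before the statement), so there is nothing to compare against; your last-bit comparison is exactly the kind of routine verification the authors had in mind, and it is carried out cleanly. The key observations---that the permutation $\phi$ on $Z_2^m$ fixes the $m$-th coordinate because $\kappa(m)\leq m-1$, and that in each of the three displayed cases for $u^{i,1}$ the final coordinate is an unmodified $u_n$---are both valid, and your boundary checks (that $\kappa_{i+1}=1$ forces $n-i-1\geq 2$, that $\kappa(m)\leq m-2$ for $m\geq 3$, and that $i\leq n-2$ suffices in the third case) are exactly what is needed to make sure coordinate $n$ never lands in the XOR-modified block.
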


By Lemma~\ref{triangle-free} and Proposition~\ref{neighbor}, we have following proposition.

\begin{proposition}
\label{star}
For any $u=u_1u_2 \ldots u_n\in V(H_n)$, $N_{H_n}(u)=\{u^1,u^2,\ldots,u^n\}$,  we have

(a) $H_n[\{u^i,u^{i,1}, u^{i+1}, u^{i,1,1}\}]$ is a $K_{1,3}$ with center $u^{i,1}$, where $1\leq i \leq n-2$.

(b) $H_n[\{u^n,u^{n-1,1}, u^{n,1^*}, u^{n,2^*}\}]$ is a $K_{1,3}$ with center $u^{n}$.

(c) $H_n[\{u^{n-1},u^{n-1,1}, u^{n}, u^{n-1,1,1^*}\}]$ is a $K_{1,3}$ with center $u^{n-1,1}$.

(d) $H_n[\{u^i,u^{i,1}, u^{i+1}, u^{i,1,1}, u^{i,1,2}\}]$ is a $K_{1,4}$ with center $u^{i,1}$, where $1\leq i \leq n-3$.

(e) $H_n[\{u^n,u^{n-1,1}, u^{n,1^*}, u^{n,2^*}, u^{n,3^*}\}]$ is a $K_{1,4}$ with center $u^{n}$.

(f) $H_n[\{u^{n-1},u^{n-1,1}, u^{n}, u^{n-1,1,1^*}, u^{n-1,1,2^*}\}]$ is a $K_{1,4}$ with center $u^{n-1,1}$.

(g) $H_n[\{u^{n-2},u^{n-2,1}, u^{n-1}, u^{n-2,1,1}, u^{n-2,1,1^*}\}]$ is a $K_{1,4}$ with center $u^{n-2,1}$.
\end{proposition}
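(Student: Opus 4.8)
The plan is to verify each of the seven items by the same three-step routine. Fix $u$ and write the listed vertex set as $\{c\}\cup L$, where $c$ is the claimed center and $L$ consists of the remaining three (resp.\ four) vertices. I will show: (1) $c$ is adjacent in $H_n$ to every vertex of $L$; (2) the vertices of $\{c\}\cup L$ are pairwise distinct; and (3) no two vertices of $L$ are adjacent. Given (1)--(3), the subgraph induced on $\{c\}\cup L$ has exactly $|L|$ edges, all incident with $c$, hence is $K_{1,3}$ (resp.\ $K_{1,4}$) with center $c$.

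Steps (1) and (3) cost almost nothing. Each edge required in (1) is of one of the declared neighbor types: the edges $cu^i$, $cu^{i,1,k}$, $cu^{n,j^{*}}$, $cu^{n-1}$, $cu^{n-1,1,j^{*}}$, $cu^{n-2}$, $cu^{n-2,1,1}$ and $cu^{n-2,1,1^{*}}$ hold by the very definitions of the $(i)$-, $(i,j,k)$- and $*$-neighbor notation, while the edges $u^{i,1}u^{i+1}$ (in (a), (d)), $u^{n-1,1}u^{n}$ (in (b), (c), (e), (f)) and $u^{n-2,1}u^{n-1}$ (in (g)) are exactly instances of Proposition~\ref{neighbor}, which asserts $u^{i,1}\in N_{H_n}(u^i)\cap N_{H_n}(u^{i+1})$. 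For step (3), once (1) holds any two vertices of $L$ have $c$ as a common neighbor, so an edge inside $L$ would close a triangle through $c$, contradicting Lemma~\ref{triangle-free}.

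Thus the whole content of the proof is step (2), and this is where I expect the main obstacle. Distinctness of $c$ from each vertex of $L$ is free, since $H_n$ is loopless and $c$ is adjacent to each. For distinctness inside $L$ I would sort the pairs by the first coordinate at which the two strings are forced to disagree: any vertex with first bit $\overline{u_1}$ --- namely $u^{n,1^{*}}$, $u^{n-1,1,1^{*}}$, $u^{n-2,1,1^{*}}$ --- is automatically separated from every vertex with first bit $u_1$; a forced disagreement already in bit $2$ separates $u^{n,2^{*}}$ and $u^{n-1,1,2^{*}}$ from the rest, and one in bit $3$ separates $u^{n,3^{*}}$. The handful of remaining pairs ($u^i$ against $u^{i,1,1}$ and against $u^{i,1,2}$; $u^{i,1,1}$ against $u^{i,1,2}$; $u^{n-2}$ against $u^{n-2,1,1}$; and so on) are dispatched by reading a single coordinate off the explicit neighbor formulas of Section~2: bit $i+1$ of $u^{i,1}$ is always the complement of bit $i+1$ of $u^i$ (so $u^{i,1,1}\neq u^i$ and $u^{i,1,2}\neq u^i$), bit $i+2$ of $u^{i,1,1}$ is always the complement of bit $i+2$ of $u^{i,1,2}$, and the cases near position $n$ are recorded as the inequalities in Propositions~\ref{vertex} and~\ref{path}. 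The only real difficulty is the bookkeeping --- running through all $\binom{4}{2}$ or $\binom{5}{2}$ pairs for each of the seven items --- together with watching small $n$: for $n\in\{3,4\}$ low-index bits such as bit $2$ of $u^{n-1}$ or of $u^{n-1,1}$ equal $\overline{u_i}$ rather than $u_i$, so items (b)--(g) for these $n$ should be checked directly on $H_3$ and $H_4$, or are subsumed by the dimension restrictions under which those items are used later.
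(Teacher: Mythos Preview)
Your approach is correct and matches the paper's: the paper's entire justification is the single line ``By Lemma~\ref{triangle-free} and Proposition~\ref{neighbor}'', which is precisely your steps (1) and (3), with distinctness (your step (2)) left implicit. Your more careful treatment of distinctness---including the caveat about small $n$---only makes explicit what the paper takes for granted.
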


\section{$\kappa(H_{n}; K_{1,r})$ and $\kappa^{s}(H_{n}; K_{1,r})$ for $r\in\{3, 4\}$}

In this section, we first show some properties of $H_n$ related to stars, then explore the $T$-structure connectivity and $T$-substructure connectivity of $H_n$ for $T=K_{1,3}$ and $T=K_{1,4}$, respectively.

\begin{lemma}
\label{star-u}
Let $K_{1,r}$ be a star in $H_{n}$. If $u$ is a vertex of $H_{n}-K_{1,r}$, then $|N_{H_{n}}(u) \cap V(K_{1,r})| \leq 2$.
\end{lemma}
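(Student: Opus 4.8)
The plan is to bound the number of vertices that a fixed vertex $u \notin V(K_{1,r})$ can have in common with $V(K_{1,r})$, using the triangle-free and common-neighbor structure of $H_n$. Write $K_{1,r}$ with center $c$ and leaves $\ell_1,\ldots,\ell_r$. Suppose for contradiction that $|N_{H_n}(u)\cap V(K_{1,r})| \geq 3$, so $u$ is adjacent to at least three vertices of $\{c,\ell_1,\ldots,\ell_r\}$. I would split into two cases according to whether $c$ is one of these three neighbors.

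\textbf{Case 1: $u$ is adjacent to $c$.} Then $u$ is adjacent to at least two leaves, say $\ell_1$ and $\ell_2$. But then $u$ and $c$ are both common neighbors of... no — rather, $\ell_1$ and $\ell_2$ have the two common neighbors $c$ and $u$, which is allowed by Lemma~\ref{common-neighbor}; however, $\langle u,\ell_1,c\rangle$ together with the edge $u\ell_1$ and the path $\ell_1 c u$ — here $u,\ell_1,c$ would form a triangle since $u\sim\ell_1$, $\ell_1\sim c$, $c\sim u$, contradicting Lemma~\ref{triangle-free}. So this case is immediately impossible once $u\sim c$ and $u$ is adjacent to any leaf; hence if $u\sim c$ then $u$ is adjacent to no leaf, giving $|N_{H_n}(u)\cap V(K_{1,r})|=1$.

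\textbf{Case 2: $u$ is not adjacent to $c$.} Then all three neighbors of $u$ in $V(K_{1,r})$ are leaves, say $\ell_1,\ell_2,\ell_3$. Now $c$ and $u$ are two distinct vertices (since $u\notin V(K_{1,r})$), and each of $\ell_1,\ell_2,\ell_3$ is a common neighbor of $c$ and $u$. That gives three common neighbors of the pair $\{c,u\}$, contradicting Lemma~\ref{common-neighbor}. Therefore $|N_{H_n}(u)\cap V(K_{1,r})|\leq 2$, and combining with Case~1 we are done. I expect no real obstacle here: the only subtlety is making sure the degenerate sub-case in Case~1 (where $u$ is adjacent to exactly one leaf and to $c$) is handled, but that already forms a triangle, so the bound $\leq 2$ holds there too; the whole argument rests only on Lemmas~\ref{triangle-free} and~\ref{common-neighbor}, both available.
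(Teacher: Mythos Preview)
Your proof is correct and follows essentially the same approach as the paper: both split on whether $u$ is adjacent to the center, invoke Lemma~\ref{triangle-free} to get $|N_{H_n}(u)\cap V(K_{1,r})|=1$ in the adjacent case, and invoke Lemma~\ref{common-neighbor} to bound the non-adjacent case by $2$. The paper's version is slightly more direct (no contradiction framing), but the content is the same.
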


\begin{proof}
Let $x$ be the center of the star $K_{1,r}$. If $(x,u)\in E(H_n)$, then $N_{H_{n}}(u) \cap V(K_{1,r})=\{x\}$ by Lemma~\ref{triangle-free}. That is, $|N_{H_{n}}(u) \cap V(K_{1,r})|=1$. If $(x,u)\notin E(H_n)$, then $|N_{H_{n}}(u) \cap V(K_{1,r})| \leq 2$ as $u$ and $x$ have at most two common neighbors by Lemma~\ref{common-neighbor}.
 \end{proof}

\begin{lemma}
\label{3-star-uv}
Let $K_{1,3}$ be a star in $H_{n}$. If $u$ and $v$ are two adjacent vertices of $H_{n}-K_{1,3}$, then $|N_{H_{n}}(\{u,v\}) \cap V(K_{1,3})| \leq 3$.
\end{lemma}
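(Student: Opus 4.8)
The plan is to bound $|N_{H_n}(\{u,v\}) \cap V(K_{1,3})|$ by combining the single-vertex bound from Lemma~\ref{star-u} with the triangle-free property. Let $x$ be the center of $K_{1,3}$, so $V(K_{1,3}) = \{x, y_1, y_2, y_3\}$ where each $y_i \sim x$. By Lemma~\ref{star-u} we have $|N_{H_n}(u) \cap V(K_{1,3})| \le 2$ and $|N_{H_n}(v) \cap V(K_{1,3})| \le 2$, so the trivial bound is $4$; I need to shave off one. The main case split is on whether $x \in N_{H_n}(\{u,v\})$.

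First, suppose $x \notin N_{H_n}(\{u,v\})$, i.e. $x$ is adjacent to neither $u$ nor $v$. Then $N_{H_n}(\{u,v\}) \cap V(K_{1,3}) \subseteq \{y_1, y_2, y_3\}$. Here the key point is that $u, v, y_1, y_2, y_3$ together with $x$ would create a forbidden configuration if the intersection were too large: since $u \sim v$, if say $u \sim y_i$ and $v \sim y_i$ then $\{u,v,y_i\}$ is a triangle, contradicting Lemma~\ref{triangle-free}; so each $y_i$ is adjacent to at most one of $u,v$. Suppose for contradiction all three $y_i \in N_{H_n}(\{u,v\})$. Partition $\{y_1,y_2,y_3\}$ according to which of $u,v$ each is adjacent to; by pigeonhole one of $u,v$ — say $u$ — is adjacent to at least two of them, say $y_1, y_2$. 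But then $u$ and $x$ have the two common neighbors $y_1, y_2$ (recall $x \not\sim u$), and moreover $v$ is adjacent to $y_3$; now consider common neighbors of $u$ and $x$: they are exactly $\{y_1,y_2\}$ by Lemma~\ref{common-neighbor} used with equality, which is consistent, so I must push further. The cleaner route: if $u \sim y_1, y_2$ then $u$ and $x$ have common neighbors $y_1,y_2$; if additionally $v \sim y_1$ (impossible, triangle) or $v \sim y_3$, look at the $4$-cycle / path structure — actually the simplest finish is to observe $v \sim u$ and $v \sim y_3$ gives a path $y_1 u v y_3$ and with $x$ adjacent to $y_1, y_3$ we get a $5$-cycle $x y_1 u v y_3 x$, which is allowed, so triangle-freeness alone is not enough and I should instead use Lemma~\ref{common-neighbor} more carefully on the pair $\{u, x\}$ versus $\{v, x\}$, or handle it by the structural description of neighbors.

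So the real argument in this case: each $y_i$ is adjacent to exactly one of $u, v$ (at most one by triangle-freeness, and to be counted at least one). WLOG $u$ is adjacent to $y_1$ and $y_2$ (the case $u \sim$ all three or $u \sim$ two is symmetric after relabeling; $u$ adjacent to $\le 1$ forces $v$ adjacent to $\ge 2$). Then $y_1, y_2 \in N_{H_n}(u) \cap N_{H_n}(x)$, so $\{y_1,y_2\}$ are precisely the (at most two) common neighbors of $u$ and $x$. Now $v \sim y_3$ and $v \sim u$; I claim this is impossible. Consider the vertex $x$: it has neighbors $y_1, y_2, y_3, \dots$. The edge $uv$ together with $v y_3$ and $x y_3$: this makes $x, y_3, v, u$ a path, and since $x \not\sim u$ and $x \not\sim v$, fine. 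To kill it, I use that $H_n$ is triangle-free AND Lemma~\ref{common-neighbor}: count common neighbors of $v$ and $x$ — they include $y_3$, and since $u \sim v$, $u$ is not a common neighbor ($x \not\sim u$); but also $y_1 \not\sim v, y_2 \not\sim v$. Hmm, this still only gives $\ge 1$. The genuinely needed observation is the following combinatorial one applied to $x$: the three neighbors $y_1, y_2, y_3$ of $x$ lie in $N_{H_n}(u) \cup N_{H_n}(v)$, so $\{y_1,y_2,y_3\} \subseteq (N_{H_n}(u)\setminus\{v\}) \cup (N_{H_n}(v)\setminus\{u\})$, and using $|N(u)\cap N(x)|\le 2$, $|N(v)\cap N(x)|\le 2$ with the partition being a genuine partition (by triangle-freeness), the only obstruction to $3$ is ruled out by noting that if $u\sim y_1,y_2$ and $v\sim y_3$, then $H_n[\{x,y_1,y_2,y_3,u,v\}]$ contains the $6$-cycle-with-chord pattern $x y_1 u y_2 x$ plus $x y_3 v u$; the $4$-cycle $x y_1 u y_2 x$ is fine. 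I expect the actual proof to invoke the explicit neighbor formulas (as in Propositions~\ref{vertex}--\ref{path}) to derive the contradiction, which is the main obstacle: translating ``$u \sim y_1, y_2$ and $v \sim y_3$, $v \sim u$'' into coordinate equations and showing no such $x$ exists.

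Thus I would organize the write-up as: (1) set $V(K_{1,3}) = \{x, y_1, y_2, y_3\}$ with center $x$; (2) dispose of the case $x \in N_{H_n}(\{u,v\})$, where $x$ adjacent to $u$ (say) forces $N_{H_n}(u) \cap V(K_{1,3}) = \{x\}$ by triangle-freeness, and then $|N_{H_n}(v) \cap V(K_{1,3})| \le 2$ gives the total $\le 3$; (3) in the remaining case $x \notin N_{H_n}(\{u,v\})$, use triangle-freeness to get that each $y_i$ is adjacent to at most one of $u, v$, then suppose all three $y_i$ are neighbors of $\{u,v\}$ and derive a contradiction via Lemma~\ref{common-neighbor} together with the explicit adjacency structure of $H_n$ — the case where one of $u,v$ collects two of the $y_i$'s and the other collects the third is the delicate one and will be settled by the neighbor formulas. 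The main obstacle is precisely this last sub-case: ruling out that $u$ has two common neighbors with $x$ among $\{y_1,y_2,y_3\}$ while $v$ (adjacent to $u$) grabs the third, which Lemma~\ref{common-neighbor} alone does not forbid and which requires either the structural neighbor descriptions or a more refined cycle/girth argument specific to $H_n$.
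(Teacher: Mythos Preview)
Your handling of the case $x \in N_{H_n}(\{u,v\})$ is correct and matches the paper. The gap is in the remaining case $x \notin N_{H_n}(\{u,v\})$: you set out to derive a contradiction from ``all three $y_i$ lie in $N_{H_n}(\{u,v\})$'', but that is \emph{not} a contradiction and is not what the lemma requires. The lemma only asks for $|N_{H_n}(\{u,v\}) \cap V(K_{1,3})| \le 3$. Once $x$ is excluded, the intersection is contained in the three-element set $\{y_1,y_2,y_3\}$, so the bound $\le 3$ is immediate. There is nothing to rule out; the configuration you call ``the delicate sub-case'' is perfectly consistent with the statement.

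In short, the ``main obstacle'' you identify is a phantom created by aiming for $\le 2$ instead of $\le 3$ in that case. The paper's proof is exactly your step~(2) followed by the one-line observation that $N_{H_n}(\{u,v\}) \cap V(K_{1,3}) \subseteq \{y_1,y_2,y_3\}$; no use of Lemma~\ref{common-neighbor} or coordinate formulas is needed here.
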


\begin{proof}
Let $V(K_{1,3})=\{x,x_1,x_2,x_3\}$ and $E(K_{1,3})=\{(x,x_1),(x,x_2),(x,x_3)\}$. If $(x,u)\in E(H_n)$, then $N_{H_{n}}(u) \cap V(K_{1,3})=\{x\}$ by Lemma~\ref{triangle-free}. That is, $|N_{H_{n}}(u) \cap V(K_{1,3})|=1$. By Lemma~\ref{star-u}, $|N_{H_{n}}(v) \cap V(K_{1,3})|\leq 2$.
Then $|N_{H_{n}}(\{u,v\}) \cap V(K_{1,3})| \leq 3$. Therefore $(x,u)\notin E(H_n)$. By the symmetry of $u$ and $v$, $(x,v)\notin E(H_n)$.
Then $N_{H_{n}}(\{u,v\}) \cap V(K_{1,3}) \subseteq\{x_1,x_2,x_3\}$ and thus $|N_{H_{n}}(\{u,v\}) \cap V(K_{1,3})| \le 3$.
 \end{proof}

\begin{lemma}
\label{4-star-uv}
Let $K_{1,r}$ be a star in $H_{n}$ with center $x$. If $u$ and $v$ are two adjacent vertices of $H_{n}-K_{1,r}$, then $|N_{H_{n}}(\{u,v\}) \cap V(K_{1,r})| \leq 4$. Moreover, if $|N_{H_{n}}(\{u,v\}) \cap V(K_{1,r})|=4$, then $(u,x),(v,x)\notin E(H_n)$ and $u,v,x\in V(aH_{n-1})$ for some $a\in \{0,1\}$.
\end{lemma}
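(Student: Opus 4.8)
The plan is to follow the same two-step template already used for Lemma \ref{3-star-uv}, but carry it one notch further. First I would split on whether $x$ is adjacent to $u$ (or, by symmetry, to $v$). If $(x,u)\in E(H_n)$, then by triangle-freeness (Lemma \ref{triangle-free}) we get $N_{H_n}(u)\cap V(K_{1,r})=\{x\}$, while Lemma \ref{star-u} gives $|N_{H_n}(v)\cap V(K_{1,r})|\le 2$; adding these, $|N_{H_n}(\{u,v\})\cap V(K_{1,r})|\le 3<4$. So in the case $|N_{H_n}(\{u,v\})\cap V(K_{1,r})|=4$ we must have $(x,u),(x,v)\notin E(H_n)$, which proves the first half of the ``moreover'' clause and also shows the bound $\le 4$ is the only thing left to establish when $x$ is adjacent to neither.

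Second, assume $(x,u),(x,v)\notin E(H_n)$. Then every vertex of $N_{H_n}(\{u,v\})\cap V(K_{1,r})$ is a leaf of the star, i.e.\ lies in $\{x_1,\dots,x_r\}$, and more precisely each such leaf is a common neighbor either of $u$ and $x$ or of $v$ and $x$ (since a leaf $x_j$ adjacent to $u$ forces the common neighbor $x$ of $u,x_j$... wait, more simply: $x_j\in N(u)$ and $x_j\in N(x)$, so $x_j$ is a common neighbor of $u$ and $x$). By Lemma \ref{common-neighbor}, $u$ and $x$ share at most two common neighbors, and likewise $v$ and $x$; hence $|N_{H_n}(\{u,v\})\cap V(K_{1,r})|\le 2+2=4$, giving the bound for $r=4$ (and trivially for $r=3$). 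For equality to hold, $u$ and $x$ must have exactly two common neighbors and $v$ and $x$ must have exactly two common neighbors, and these four common neighbors must be pairwise distinct leaves of the star.

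The remaining point is the ``layer'' conclusion: if equality holds then $u,v,x$ all lie in a single copy $aH_{n-1}$. Here I would use the recursive structure of $H_n$: each vertex $w$ has exactly one neighbor in the opposite copy $\overline{w_1}H_{n-1}$ and $n-1$ neighbors in its own copy. If $x$ and $u$ lay in different copies, their unique cross-copy neighbors would be the only candidates for common neighbors straddling the two halves, and one checks that $u$ and $x$ can then have at most one common neighbor (essentially because two vertices in different halves have at most one common neighbor — this should follow from Lemma \ref{common-neighbor} together with the observation that a common neighbor must itself be the cross neighbor of one of them), contradicting that $u,x$ share two. The same argument applies to $v$ and $x$, so $u,v,x$ share a common first bit $a$. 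I expect this last step — pinning down that two vertices in opposite copies of $H_{n-1}$ have at most one common neighbor, so that two common neighbors force the same layer — to be the main obstacle, and I would handle it by a direct case analysis on where a putative common neighbor sits relative to the two halves, using Definition of $H_n$ and Proposition \ref{neighbor}.
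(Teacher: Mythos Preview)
Your proposal is correct and follows essentially the same route as the paper: the bound $\le 4$ comes from two applications of Lemma~\ref{star-u} (the paper invokes that lemma directly rather than via your case split on whether $x\sim u$), and the layer conclusion is obtained from the fact that each vertex has a unique neighbor in the opposite copy of $H_{n-1}$. The only organizational difference is that, after placing $u,v$ in the same half $aH_{n-1}$ via the cross-half common-neighbor argument you describe, the paper disposes of the remaining case $x\in\overline aH_{n-1}$ by the single observation $N_{H_n}(\{u,v\})\cap N_{H_n}(x)\subseteq\{u^1,v^1,x^1\}$ instead of re-invoking the cross-half bound on the pairs $(u,x)$ and $(v,x)$.
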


\begin{proof}
By Lemma~\ref{star-u}, $|N_{H_{n}}(u) \cap V(K_{1,r})| \leq 2$ and $|N_{H_{n}}(v) \cap V(K_{1,r})| \leq 2$. Then $|N_{H_{n}}(\{u,v\}) \cap V(K_{1,r})| \leq 4$.

Suppose that $|N_{H_{n}}(\{u,v\}) \cap V(K_{1,r})| = 4$, in the following, we show that $u,v,x\in V(aH_{n-1})$ for some $a\in \{0,1\}$.
Note that $|N_{H_{n}}(u) \cap V(K_{1,r})|=2$ and $|N_{H_{n}}(v) \cap V(K_{1,r})|=2$, then $(u,x),(v,x)\notin E(H_n)$. If $u\in V(0H_{n-1})$ and $v\in V(1H_{n-1})$, then  $x$ and $v$ have at most one common neighbor if $x\in V(0H_{n-1})$, and $x$ and $u$ have at most one common neighbor if $x\in V(1H_{n-1})$, a contradiction. Therefore  $u,v\in V(aH_{n-1})$ for some $a\in \{0,1\}$. If $x\in V(\overline{a}H_{n-1})$, then $N_{H_{n}}(\{u,v\})\cap N_{H_{n}}(x)\subseteq \{u^1,v^1,x^1\}$, i.e, $|N_{H_{n}}(\{u,v\}) \cap V(K_{1,r})|\leq 3$, a contradiction. Therefore $x\in V(aH_{n-1})$.
 \end{proof}

\begin{lemma}
\label{4'-star-uv}
Let $F_1,F_2,\ldots,F_{\lceil n/2\rceil-1}$ be $\lceil\frac{n}{2}\rceil-1$ stars in $H_{n}$.
If $u$ and $v$ are two adjacent vertices of $H_{n}-\cup_{i=1}^{\lceil n/2\rceil-1}V(F_i)$, then $| N_{H_n}(\{u,v\}) \cap (\cup_{i=1}^{\lceil n/2\rceil-1}V(F_i))| \leq 2n-3$.
\end{lemma}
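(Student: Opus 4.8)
The plan is to estimate $|N_{H_n}(\{u,v\}) \cap (\cup_{i=1}^{\lceil n/2\rceil-1}V(F_i))|$ by combining a global degree bound with the structural restriction from Lemma~\ref{4-star-uv}. Since $u$ and $v$ are adjacent, $H_n$ is $n$-regular and triangle-free (Lemma~\ref{triangle-free}), the set $N_{H_n}(\{u,v\})$ has exactly $(n-1)+(n-1)=2n-2$ vertices (the edge $uv$ contributes, and $u,v$ have no common neighbor). So trivially $|N_{H_n}(\{u,v\}) \cap (\cup_i V(F_i))| \le 2n-2$, and the whole point is to shave off one more to reach $2n-3$.

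First I would argue that it is impossible for every one of the $\lceil n/2\rceil - 1$ stars $F_i$ to have $|N_{H_n}(\{u,v\}) \cap V(F_i)| = 4$ \emph{while simultaneously} covering all $2n-2$ neighbours. If each $F_i$ met $N_{H_n}(\{u,v\})$ in exactly $4$ vertices and these meeting sets were pairwise disjoint, the stars would together cover $4(\lceil n/2\rceil-1) \ge 2n-4$ neighbours, which is not yet a contradiction by itself — so the refinement in Lemma~\ref{4-star-uv} is essential. That lemma says that whenever a star $F_i$ (center $x_i$) attains the value $4$, we must have $(u,x_i),(v,x_i)\notin E(H_n)$ and $u,v,x_i$ all lie in the same copy $aH_{n-1}$. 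I would then observe that the two neighbours of $\{u,v\}$ that lie \emph{outside} $aH_{n-1}$, namely $u^1$ and $v^1$ (the unique cross-edge neighbours of $u$ and $v$), can never be among the four covered vertices of such an $F_i$: indeed $N_{H_n}(\{u,v\})\cap N_{H_n}(x_i)\subseteq\{u,v\}\cup (V(aH_{n-1})\setminus\{u,v\})$ together with at most the single cross neighbour $x_i^1$ — more carefully, $u^1,v^1\in\overline aH_{n-1}$ have their only neighbour back in $aH_{n-1}$ equal to $u,v$ respectively, so they cannot be adjacent to $x_i\in aH_{n-1}$ and cannot equal $x_i$ (which would force them in $F_i$'s center anyway). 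Hence each ``full'' star $F_i$ is confined to meeting $N_{H_n}(\{u,v\})$ within the $2n-4$ vertices of $N_{H_n}(\{u,v\})\cap V(aH_{n-1})$.

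Now I would run the counting. Suppose for contradiction that $\cup_i V(F_i)$ covers all $2n-2$ neighbours of $\{u,v\}$. Then in particular $u^1$ and $v^1$ are covered, say $u^1\in V(F_j)$, $v^1\in V(F_\ell)$; by the previous paragraph neither $F_j$ nor $F_\ell$ can be a ``full'' star, so each of them contributes at most $3$ to the covering count. Every other $F_i$ contributes at most $4$. Therefore
\[
2n-2 \;=\; \Bigl|N_{H_n}(\{u,v\}) \cap \bigcup_{i=1}^{\lceil n/2\rceil-1} V(F_i)\Bigr| \;\le\; 4\bigl(\lceil n/2\rceil-1 - t\bigr) + 3t,
\]
where $t\ge 1$ is the number of distinct indices among $\{j,\ell\}$ (so $t\in\{1,2\}$), giving $2n-2 \le 4(\lceil n/2\rceil-1)-t \le 4\lceil n/2\rceil - 4 - 1 = 4\lceil n/2\rceil - 5$. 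For even $n$ this reads $2n-2\le 2n-5$, and for odd $n$ it reads $2n-2 \le 2(n+1)-5 = 2n-3$, both false. Hence not all $2n-2$ neighbours can be covered, so $|N_{H_n}(\{u,v\}) \cap (\cup_i V(F_i))| \le 2n-3$, as claimed. (One should double-check the odd case: if $t=2$ we get $2n-2\le 4\lceil n/2\rceil - 6 = 2n-4$, also a contradiction; the tight case is odd $n$ with $t=1$, i.e. $u^1,v^1$ in the same star, which is exactly why the bound is $2n-3$ and not better.)

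The main obstacle I anticipate is the careful verification that $u^1$ and $v^1$ genuinely cannot belong to a star $F_i$ that achieves the value $4$ — this needs Lemma~\ref{4-star-uv}'s conclusion that such a star's center $x_i$ lives in the same subcube $aH_{n-1}$ as $u,v$, plus the observation that a vertex of $\overline a H_{n-1}$ meets $aH_{n-1}$ in exactly one vertex via its cross-edge. A secondary subtlety is bookkeeping when $u^1$ and $v^1$ happen to lie in the \emph{same} star $F_j$: then only one index is ``downgraded'' from $4$ to $3$, which is precisely the case that makes $2n-3$ sharp, so the argument must not accidentally assume $j\ne\ell$. Everything else is the routine arithmetic on $\lceil n/2\rceil$ sketched above.
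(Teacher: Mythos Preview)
Your argument is correct and hinges on the same key observation as the paper's proof: by Lemma~\ref{4-star-uv}, any star $F_i$ meeting $N_{H_n}(\{u,v\})$ in four vertices must have its center $x_i$ in the same half $aH_{n-1}$ as $u$ and $v$, so the cross-edge neighbour $u^1\in\overline{a}H_{n-1}$ (whose only neighbour back in $aH_{n-1}$ is $u\ne x_i$) cannot belong to such a star. The paper organises the contradiction slightly more economically: from $2n-2\le 4(\lceil n/2\rceil-1)$ it first deduces that $n$ is odd and \emph{every} $F_i$ is full, after which the star containing $u^1$ is immediately contradictory---this bypasses your $t\in\{1,2\}$ case analysis and the separate even/odd arithmetic. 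One small omission in your write-up: you tacitly assume $u,v\in aH_{n-1}$ for a common $a$, which is only forced once at least one full star exists; you should note that if $u,v$ lie in different halves (equivalently, if no $F_i$ is full) then the count is at most $3(\lceil n/2\rceil-1)<2n-2$ and you are done directly.
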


\begin{proof}
Note that $|N_{H_n}(\{u,v\})|=2n-2$, suppose to the contrary that $| N_{H_n}(\{u,v\}) \cap (\cup_{i=1}^{\lceil n/2\rceil-1} V(F_i))|= 2n-2$.
By Lemma~\ref{4-star-uv}, $|N_{H_{n}}(\{u,v\}) \cap V(K_{1,r})| \leq 4$, then $2n-2=| N_{H_n}(\{u,v\}) \cap (\cup_{i=1}^{\lceil n/2\rceil-1}V(F_i))| \leq 4(\lceil\frac{n}{2}\rceil-1)$. Thus $n$ is odd, $|N_{H_{n}}(\{u,v\}) \cap V(F_i)| =4$ for $1\leq i\leq \lceil n/2\rceil-1$ and $ N_{H_n}(\{u,v\}) \cap (\cup_{i=1}^{\lceil n/2\rceil}V(F_i))=N_{H_n}(\{u,v\})$.
Let $x_i$ be the center of $F_i$, then $(u,x_i),(v,x_i)\notin E(H_n)$ and $u,v,x_i\in V(aH_{n-1})$ for some $a\in \{0,1\}$ by Lemma~\ref{4-star-uv}.  Assume, without loss of generality, that $u^1\in F_1$. Then $u^1\neq x_1$ as $(u,x_1)\notin E(H_n)$. On the other hand, $(u^1,x_1)\in E(H_n)$, which implies $x_1\in V(\overline{a}H_{n-1})$, a contradiction.
\end{proof}

\begin{lemma}
\label{lower}
$\kappa^{s}(H_{n}; K_{1,r})\geq \lceil\frac{n}{2}\rceil$ for $3\leq r\leq 4$ and $n\geq 4$.
\end{lemma}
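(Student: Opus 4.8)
Suppose for contradiction that $\kappa^{s}(H_{n};K_{1,r})\le\lceil n/2\rceil-1$, and let $\mathcal{F}=\{T'_{1},\dots,T'_{m}\}$ be a minimum $K_{1,r}$-substructure-cut, so $m\le\lceil n/2\rceil-1$ and each $T'_{i}$ is isomorphic to a star $K_{1,s_{i}}$ with $s_{i}\le r\le 4$. Put $F=\bigcup_{i=1}^{m}V(T'_{i})$; then $H_{n}-F$ is disconnected, and I let $C$ be a component of $H_{n}-F$ with the fewest vertices. The plan is to rule out each possible value of $|V(C)|$. If $|V(C)|=1$, say $V(C)=\{u\}$, then $N_{H_{n}}(u)\subseteq F$, and since $u\notin V(T'_{i})$ the argument of Lemma~\ref{star-u} (which works verbatim for any star $K_{1,s_{i}}$) gives $|N_{H_{n}}(u)\cap V(T'_{i})|\le2$; hence $n=|N_{H_{n}}(u)|\le 2m\le 2(\lceil n/2\rceil-1)\le n-1$, a contradiction. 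If $|V(C)|=2$, say $V(C)=\{u,v\}$ with $uv\in E(H_{n})$, then by Lemma~\ref{triangle-free} the vertices $u,v$ have no common neighbour, so $|N_{H_{n}}(\{u,v\})\setminus\{u,v\}|=2n-2$ and all of these vertices lie in $F$; but $u,v\notin F$, so Lemma~\ref{4'-star-uv} applied to $T'_{1},\dots,T'_{m}$ (the bound obviously persisting for fewer than $\lceil n/2\rceil-1$ stars, or after padding $\mathcal{F}$ with extra copies of $T'_{1}$) gives $|N_{H_{n}}(\{u,v\})\cap F|\le 2n-3<2n-2$, again a contradiction.

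It remains to treat $|V(C)|\ge3$; here every component of $H_{n}-F$ has at least three vertices. For $n=4$ this case cannot occur, because $|F|\le m(r+1)\le r+1\le5<6=\kappa_{1}(H_{4})$ (Lemma~\ref{kappa1}) forces $H_{4}-F$ to contain an isolated vertex. For $n\ge5$, Lemma~\ref{kappa2} gives $|F|\ge\kappa_{2}(H_{n})=3n-5$, while $|F|\le m(r+1)\le 5(\lceil n/2\rceil-1)$; combining these yields $3n\le5\lceil n/2\rceil$, which is false for every even $n$ and for every odd $n\ge7$. Thus for all $n\ge6$ we reach a contradiction, and the only surviving case is $n=5$ with $r=4$.

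For $n=5,\ r=4$ the inequalities are tight: $10=\kappa_{2}(H_{5})\le|F|\le 5(\lceil5/2\rceil-1)=10$, so $|F|=10$, $m=2$, and $T'_{1}\cong T'_{2}\cong K_{1,4}$ with $V(T'_{1})\cap V(T'_{2})=\emptyset$; in particular $H_{5}[F]$ contains a spanning copy of $K_{1,4}\sqcup K_{1,4}$, so \emph{no vertex of $F$ is isolated in $H_{5}[F]$}. Now I would analyse the structure of $C$. If $|V(C)|=3$ then $C$ is an induced path $v\!-\!u\!-\!w$, and since $C$ is a component $N_{H_{5}}(\{u,v,w\})\setminus\{u,v,w\}\subseteq F$; a count using Lemma~\ref{common-neighbor} shows this set has at least $3\cdot5-5=10$ vertices, hence equals $F$, and the unique common neighbour $z\ne u$ of $v,w$ exists. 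Because $H_{5}$ is triangle-free, $z\not\sim u$ and $z$ has no neighbour among $N_{H_{5}}(v)\cup N_{H_{5}}(w)$, so $N_{H_{5}}(z)\cap F\subseteq N_{H_{5}}(u)\setminus\{v,w\}$; but any such neighbour $a$ of $z$ would be a third common neighbour of $u$ and $z$ (besides $v,w$), violating Lemma~\ref{common-neighbor}. Hence $z$ is isolated in $H_{5}[F]$, contradicting the previous paragraph. If $|V(C)|=4$ then $C\in\{P_{4},C_{4},K_{1,3}\}$; for $C_{4}$ and $K_{1,3}$ a direct neighbourhood count already gives $|N_{H_{5}}(V(C))\setminus V(C)|\ge12$ resp. $\ge11$, exceeding $|F|=10$, and for $P_{4}$ the borderline count forces $c_{1},c_{4}$ to have two external common neighbours $r_{1},r_{2}$ each joined (in $H_{5}[F]$) to three of a fixed four-set, whence $r_{1},r_{2}$ would have three common neighbours, again contradicting Lemma~\ref{common-neighbor}. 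Finally the cases $|V(C)|\ge5$ are excluded by a similar but longer edge-count, using that each centre of the two $K_{1,4}$'s has at most one neighbour outside $F$.

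The routine portion is the trichotomy on $|V(C)|$ together with the two extra-connectivity thresholds $\kappa_{1}$ and $\kappa_{2}$, which dispose of everything except the single tight case $n=5$, $r=4$. \emph{The main obstacle is precisely this tight case}: when $|F|$ equals $\kappa_{2}(H_{5})$ exactly, one cannot conclude by counting alone and must instead exploit triangle-freeness and the "at most two common neighbours" property to pin down the forced structure of the small component and of $F$ as a disjoint union of two $K_{1,4}$'s; handling $|V(C)|\ge5$ in that case is the most delicate step and may alternatively be settled by direct inspection of $H_{5}$.
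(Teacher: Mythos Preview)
Your argument follows the paper's proof almost verbatim: the same trichotomy on $|V(C)|$, Lemma~\ref{star-u} for $|V(C)|=1$, Lemma~\ref{4'-star-uv} for $|V(C)|=2$ (the paper uses Lemma~\ref{3-star-uv} when $r=3$, but that is only cosmetic), and the extra-connectivity Lemmas~\ref{kappa1}--\ref{kappa2} for $|V(C)|\ge3$.

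You have actually been more careful than the paper in one place. In Case~3 the paper simply asserts $3n-5>5(\lceil n/2\rceil-1)$ for $n\ge5$ and stops; but both sides equal $10$ at $n=5$, so the published argument silently leaves the case $n=5$, $r=4$, $|V(C)|\ge3$ unhandled. You correctly isolate this as the only surviving configuration, and your treatment of the subcase $|V(C)|=3$ (showing that the unique second common neighbour $z$ of $v,w$ must be isolated in $H_5[F]$, contradicting that $F$ carries two disjoint $K_{1,4}$'s) is sound and goes beyond what the paper provides.

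The remainder of your $n=5$ analysis, however, is not yet a proof. In the $P_4$ subcase your pigeonhole step assumes that each $r_i$ has all three of its remaining neighbours inside $F$, but nothing forces this: those neighbours may lie in another component of $H_5-F$, so the claimed third common neighbour of $r_1,r_2$ does not follow. And the subcases $|V(C)|\ge5$ are only promised (``a similar but longer edge-count'' and ``direct inspection of $H_5$'' are not arguments). A more promising line is to note that each centre $x_i$ of the two $K_{1,4}$'s has exactly one neighbour in $C$ and four in $F$, which sharply restricts which vertices of $F$ can serve as centres, and then work from there---but that analysis still has to be carried out.
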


\begin{proof}
Let $\mathcal{F}=\{\underbrace{P_1,\ldots,P_1}_{x_1}, \underbrace{P_2,\ldots,P_2}_{x_2},\underbrace{P_3,\ldots,P_3}_{x_3},\underbrace{K_{1,3},\ldots,K_{1,3}}_{x_4}\}$ if $r=3$, and $\mathcal{F}=\{\underbrace{P_1,\ldots,P_1}_{x_1}, \underbrace{P_2,\ldots,P_2}_{x_2},\underbrace{P_3,\ldots,P_3}_{x_3},\underbrace{K_{1,3},\ldots,K_{1,3}}_{x_4},\underbrace{K_{1,4},\ldots,K_{1,4}}_{x_5}\}$ if $r=4$, where $x_i\geq 0$ and $1\leq i\leq r+1$.  Then $|\mathcal{F}| =\sum^{r+1}_{i=1}x_i $.
Suppose to the contrary that $|\mathcal{F}| \leq \lceil\frac{n}{2}\rceil-1$ and $H_n-\mathcal{F}$ is disconnected, then $H_n-\mathcal{F}$ has at least two components. Let $C$ be the smallest component of $H_n-\mathcal{F}$. We consider the following three cases.

\vskip .2cm
{\bf Case 1.} $|V(C)| =1$.

In this case, $C$ is an isolated vertex $w$. Note that $|N_{H_n}(w)|=n$. By Lemmas~\ref{triangle-free} and \ref{star-u}, every element in $\mathcal{F}$ contains at most two neighbors of $w$, then $2|\mathcal{F}|\geq|N_{H_n}(w)|$. Thus, $2(\lceil\frac{n}{2}\rceil-1)\geq n$, a contradiction.

\vskip .2cm
{\bf Case 2.} $|V(C)| =2$.

In this case, $C$ is an edge $(u,v)$. Note that $N_{H_n}(\{u, v\}) =2n-2$. If $r=3$, every element in $\mathcal{F}$ contains at most three neighbors of $\{u, v\}$ by Lemmas~\ref{triangle-free} and~\ref{3-star-uv}, then $3|\mathcal{F}|\geq|N_{H_n}(\{u, v\})|$. Thus $3(\lceil\frac{n}{2}\rceil-1)\geq 2n-2$, a contradiction. Therefore $r=4$. By Lemma~\ref{4'-star-uv}, all elements in $\mathcal{F}$ contains at most $2n-3$ neighbors of $\{u, v\}$, a contradiction.

\vskip .2cm
{\bf Case 3.} $|V(C)| \geq3$.

Note that $|\mathcal{F}| \leq \lceil\frac{n}{2}\rceil-1$ and every element in $\mathcal{F}$ contains at most five vertices, then $|V(\mathcal{F})| \leq 5(\lceil\frac{n}{2}\rceil-1)$. If $n=4$, then $|V(\mathcal{F})|\leq 5$. By Lemma~\ref{kappa1}, $\kappa_1(H_n) =2n-2=6$, which implies that we have to delete at least $6$ vertices to separate $C$ from $H_4$, a contradiction. Therefore $n\geq 5$. By Lemma~\ref{kappa2}, $\kappa_2(H_n) =3n-5$, which implies that we have to delete at least $3n-5>5(\lceil\frac{n}{2}\rceil-1)$ vertices to separate $C$ from $H_n$, a contradiction.

Thus, $\kappa^{s}(H_{n}; K_{1,3})\geq \lceil\frac{n}{2}\rceil$.
\end{proof}

\subsection{$\kappa(H_{n}; K_{1,3})$ and $\kappa^{s}(H_{n}; K_{1,3})$}

\vskip.2cm

\begin{lemma}
\label{3-upper}
$\kappa(H_{n}; K_{1,3})\leq \lceil\frac{n}{2}\rceil$ for $n\geq 4$.
\end{lemma}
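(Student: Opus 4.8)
The plan is to exhibit an explicit $K_{1,3}$-structure-cut of $H_n$ of size $\lceil n/2\rceil$, using the star gadgets already catalogued in Proposition~\ref{star}. Fix a vertex $u=u_1u_2\cdots u_n$ and recall $N_{H_n}(u)=\{u^1,u^2,\ldots,u^n\}$. The idea is to "cover" all $n$ neighbors of $u$ by $\lceil n/2\rceil$ copies of $K_{1,3}$, each of which swallows two consecutive neighbors $u^i,u^{i+1}$ together with their shared extra neighbor $u^{i,1}$; after deleting these stars, $u$ becomes isolated, hence $H_n-\mathcal F$ is disconnected (as long as $n\ge 4$ so that $H_n$ has more than $n+1$ vertices and the rest is nonempty).

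First I would pair up the indices $1,2,\ldots,n$. For each pair $\{2j-1,2j\}$ with $1\le j\le \lfloor n/2\rfloor$, Proposition~\ref{star}(a) (applied with $i=2j-1$, valid since $i\le n-2$ once $j\le \lfloor n/2\rfloor$ and, at the last pair, using parts (b)/(c) if $i$ reaches $n-1$ or $n$) gives a $K_{1,3}$ namely $H_n[\{u^i,u^{i,1},u^{i+1},u^{i,1,1}\}]$ with center $u^{i,1}$, which contains both $u^i$ and $u^{i+1}$. Call these stars $F_1,\ldots,F_{\lfloor n/2\rfloor}$. If $n$ is even these already cover $\{u^1,\ldots,u^n\}$; if $n$ is odd, the single leftover neighbor $u^n$ is covered by one more $K_{1,3}$ from Proposition~\ref{star}(b), namely $H_n[\{u^n,u^{n-1,1},u^{n,1^*},u^{n,2^*}\}]$, bringing the count to $\lceil n/2\rceil$. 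Setting $\mathcal F=\{F_1,\ldots\}$, every neighbor of $u$ lies in $\bigcup_i V(F_i)$, so $u$ is isolated in $H_n-\mathcal F$, and since $|\mathcal F|\le\lceil n/2\rceil$ we need only check $|V(H_n)|=2^n>n+1+\lceil n/2\rceil\cdot 4$ for $n\ge 4$ to guarantee $H_n-\mathcal F$ has another component; this is trivial.

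The one genuine obstacle is disjointness: for $\mathcal F$ to be a legitimate structure-cut the chosen stars must be pairwise vertex-disjoint, and also none of them may contain $u$ itself (else "removing $\mathcal F$" already removes $u$ and the argument about isolating $u$ is vacuous — in fact for the structure-connectivity bound we just need the stars to be genuine subgraphs isomorphic to $K_{1,3}$, so we must at least ensure the four listed vertices in each gadget are distinct and induce exactly $K_{1,3}$). For the within-gadget distinctness, Proposition~\ref{star}(a)–(c) already asserts the induced subgraph is a $K_{1,3}$, so that is given. For cross-gadget disjointness I would invoke Propositions~\ref{vertex} and~\ref{path}: part (a) of Proposition~\ref{vertex} gives $u^{i,1}\ne u^{j,1}$ for $i\ne j$, handling the centers; the neighbor sets $\{u^i,u^{i+1}\}$ of consecutive pairs are disjoint by construction since the pairs partition $\{1,\ldots,n\}$; the remaining potential clashes involve the "third leaves" $u^{i,1,1}$ (and $u^{n,1^*},u^{n,2^*}$ in the odd case) colliding with leaves of another gadget — exactly the inequalities packaged in Propositions~\ref{vertex}(b)–(d) and~\ref{path}. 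So the whole disjointness check reduces to quoting those propositions pair by pair, which I would organize by first disposing of the even case (only gadgets of type (a) involved, clashes among $u^{i,1,1}$'s handled by noting they have distinct first $i+1$ bits) and then, in the odd case, separately checking the extra star of type (b) against each type-(a) star using Proposition~\ref{vertex}(d) and Proposition~\ref{path}. Once disjointness is established, the bound $\kappa(H_n;K_{1,3})\le|\mathcal F|=\lceil n/2\rceil$ follows immediately.
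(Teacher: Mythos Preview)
Your proposal is correct and follows essentially the same construction as the paper: pair consecutive neighbours $u^{2j-1},u^{2j}$ and cover each pair by the star $H_n[\{u^i,u^{i,1},u^{i+1},u^{i,1,1}\}]$ from Proposition~\ref{star}(a), with the last pair handled by Proposition~\ref{star}(c) when $n$ is even and an extra star from Proposition~\ref{star}(b) when $n$ is odd, leaving $u$ isolated. The only minor slip is the appeal to Proposition~\ref{path} in the disjointness discussion --- that proposition concerns the iterated vertices $u^{n,(n-2)^*,\ldots}$ used in the $P_k$ construction and plays no role here; Proposition~\ref{vertex} (together with the first-bit observations you mention) suffices, which is exactly what the paper cites.
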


\begin{proof}
For any $u=u_1\cdots u_n\in V(H_n)$, $N_{H_n}(u)=\{u^1,\ldots,u^n\}$. Let $T_i$ be the subgraph induced  by $\{u^i,u^{i,1}, u^{i+1}, u^{i,1,1}\}$ for $1\leq i \leq n-2$. If $n$ is odd, we let $T_n$ be the subgraph induced  by $\{u^n, u^{n-1,1},u^{n,1^{*}}, u^{n,2^{*}}\}$; and if $n$ is even, let $T_{n-1}$ the subgraph induced  by $\{u^{n-1}, u^{n-1,1}, u^{n}, u^{n-1,1,1^{*}}\}$. Then $T_i$ is a $K_{1,3}$ by Proposition~\ref{star}(a-c) for $1\leq i\leq n$, and $V(T_i)\cap V(T_j)=\emptyset$ by Proposition~\ref{vertex} for $1\leq i\neq j\leq n$.

Set $\mathcal{S} = \{T_1, T_3,\ldots, T_{n-2}, T_n\}$ if $n$ is odd; and $\mathcal{S} = \{T_1, T_3,\ldots, T_{n-3}, T_{n-1}\}$ if $n$ is even.
Then, in either case, $H_n-\mathcal{S}$ is disconnected, one component is $\{u\}$ and $|\mathcal{S}| =\lceil\frac{n}{2} \rceil$. Thus, $\kappa(H_{n}; K_{1,3})\leq \lceil\frac{n}{2}\rceil$.
\end{proof}

Note that $\kappa^{s}(H_{n}; K_{1,3})\leq \kappa(H_{n}; K_{1,3})$, and thus, by Lemmas \ref{lower} and \ref{3-upper}, we have the following result.

\begin{theorem}
$\kappa(H_{n}; K_{1,3})=\kappa^{s}(H_{n}; K_{1,3})=\lceil\frac{n}{2}\rceil$ for $n\geq 4$.
\end{theorem}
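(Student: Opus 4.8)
The plan is to prove the two inequalities $\kappa(H_n;K_{1,3})\ge \kappa^{s}(H_n;K_{1,3})\ge \lceil n/2\rceil$ and $\kappa(H_n;K_{1,3})\le\lceil n/2\rceil$ separately, and then combine them using the trivial relation $\kappa^{s}(G;T)\le\kappa(G;T)$ which holds by definition. The lower bound $\kappa^{s}(H_n;K_{1,3})\ge\lceil n/2\rceil$ is exactly Lemma~\ref{lower}, and the upper bound $\kappa(H_n;K_{1,3})\le\lceil n/2\rceil$ is exactly Lemma~\ref{3-upper}; so the theorem is a one-line consequence. Concretely, I would write: since $\kappa^{s}(H_n;K_{1,3})\le\kappa(H_n;K_{1,3})$ by definition, Lemma~\ref{lower} gives $\kappa(H_n;K_{1,3})\ge\kappa^{s}(H_n;K_{1,3})\ge\lceil n/2\rceil$, while Lemma~\ref{3-upper} gives $\kappa(H_n;K_{1,3})\le\lceil n/2\rceil$; hence all three quantities coincide.

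Since the theorem itself is immediate from the two preceding lemmas, the real content lies in those lemmas, and if I were proving the statement from scratch I would focus my effort there. For the upper bound (Lemma~\ref{3-upper}) the idea is constructive: fix an arbitrary vertex $u$, and around it build a family of vertex-disjoint copies of $K_{1,3}$ — namely $T_i=H_n[\{u^i,u^{i,1},u^{i+1},u^{i,1,1}\}]$ for $1\le i\le n-2$ (these are stars by Proposition~\ref{star}(a)), together with one extra star $T_n$ or $T_{n-1}$ handling the two ``leftover'' neighbors $u^{n-1},u^n$ depending on the parity of $n$ (Proposition~\ref{star}(b)--(c)). Taking every other one of these — the odd-indexed $T_i$ plus the terminal star — yields $\lceil n/2\rceil$ pairwise disjoint stars that together cover all $n$ neighbors of $u$; removing them isolates $u$. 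The vertex-disjointness across the whole family is the delicate bookkeeping point, and it is precisely what Propositions~\ref{vertex}, \ref{path} are set up to guarantee.

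The main obstacle is the lower bound (Lemma~\ref{lower}), i.e. showing that no $T$-substructure-cut with fewer than $\lceil n/2\rceil$ pieces exists. The strategy there is to take a hypothetical cut $\mathcal{F}$ with $|\mathcal{F}|\le\lceil n/2\rceil-1$, look at the smallest component $C$ of $H_n-\mathcal{F}$, and derive a contradiction by cases on $|V(C)|$. When $|V(C)|=1$ one uses that each star (or sub-star) meets the neighborhood of a vertex in at most $2$ points (Lemmas~\ref{triangle-free}, \ref{star-u}), forcing $2(\lceil n/2\rceil-1)\ge n$, impossible. When $|V(C)|=2$, the relevant bound is that each piece meets $N(\{u,v\})$ in at most $3$ points for $K_{1,3}$ (Lemma~\ref{3-star-uv}) — note that for $r=4$ one needs the sharper collective bound $2n-3$ of Lemma~\ref{4'-star-uv} — again contradicting $|N(\{u,v\})|=2n-2$. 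When $|V(C)|\ge 3$, one invokes the known extra-connectivity values $\kappa_1(H_n)=2n-2$ (Lemma~\ref{kappa1}) and $\kappa_2(H_n)=3n-5$ (Lemma~\ref{kappa2}): since $|\mathcal{F}|\le\lceil n/2\rceil-1$ pieces contribute at most $5(\lceil n/2\rceil-1)$ vertices, and $3n-5>5(\lceil n/2\rceil-1)$ for $n\ge5$ (with $n=4$ handled directly via $\kappa_1$), no such separation is possible. So the proof of the theorem is trivial; the architecture of the argument — and the place where all the structural lemmas about $H_n$ are actually consumed — is in Lemmas~\ref{lower} and~\ref{3-upper}.
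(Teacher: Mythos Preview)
Your proposal is correct and matches the paper's approach exactly: the theorem is stated as an immediate consequence of Lemma~\ref{lower} and Lemma~\ref{3-upper} together with the trivial inequality $\kappa^{s}(H_n;K_{1,3})\le\kappa(H_n;K_{1,3})$, and your sketches of those two lemmas (the disjoint-star construction around $u$ for the upper bound, and the case analysis on $|V(C)|$ using Lemmas~\ref{star-u}, \ref{3-star-uv}, \ref{kappa1}, \ref{kappa2} for the lower bound) are precisely what the paper does.
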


\subsection{$\kappa(H_{n}; K_{1,4})$ and $\kappa^{s}(H_{n}; K_{1,4})$}

\begin{lemma}
\label{4-upper}
$\kappa(H_{n}; K_{1,4})\leq \lceil\frac{n}{2}\rceil$
 for $n\geq4$.
\end{lemma}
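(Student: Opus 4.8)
The plan is to mimic exactly the construction used in Lemma~\ref{3-upper}, but now packaging the $n$ neighbors of a fixed vertex $u$ into copies of $K_{1,4}$ (each absorbing $4$ of the $u^i$'s together with one extra ``fourth leaf'' outside $N_{H_n}(u)$), so that roughly $n/2$ stars suffice to isolate $u$. Concretely, fix $u=u_1\cdots u_n\in V(H_n)$ with $N_{H_n}(u)=\{u^1,\dots,u^n\}$. For $1\le i\le n-3$ let $T_i$ be the subgraph induced by $\{u^i,u^{i,1},u^{i+1},u^{i,1,1},u^{i,1,2}\}$, which is a $K_{1,4}$ with center $u^{i,1}$ by Proposition~\ref{star}(d). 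These $T_i$ cover $u^1,\dots,u^{n-2}$ among the neighbors of $u$ (each $T_i$ contributes the two neighbors $u^i,u^{i+1}$, and taking $T_1,T_3,T_5,\dots$ gives a vertex-disjoint family covering all of $u^1,\dots,u^{n-2}$), while the remaining work is to handle $u^{n-1}$ and $u^n$. I would then split into cases according to the parity of $n$ and which indices are already used, exactly as in Lemma~\ref{3-upper}: when the consecutive-pairing $T_1,T_3,\dots$ runs out leaving $u^{n-1},u^n$ uncovered, use Proposition~\ref{star}(e) (the $K_{1,4}$ on $\{u^n,u^{n-1,1},u^{n,1^*},u^{n,2^*},u^{n,3^*}\}$ centered at $u^n$) to absorb $u^n$, and Proposition~\ref{star}(f)/(g) (the $K_{1,4}$'s centered at $u^{n-1,1}$ or at $u^{n-2,1}$) to absorb $u^{n-1}$, choosing whichever of (e)--(g) fits the leftover index set so that altogether $\{u^1,\dots,u^n\}\subseteq\bigcup V(T_{i})$ over a family $\mathcal S$ of $\lceil n/2\rceil$ pairwise vertex-disjoint $K_{1,4}$'s.

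The key step that needs care is pairwise vertex-disjointness of the chosen stars, which is where Propositions~\ref{vertex} and~\ref{path} come in. For two ``interior'' stars $T_i,T_j$ with $i<j$ and $j\ge i+2$, their vertex sets are $\{u^i,u^{i+1},u^{i,1},u^{i,1,1},u^{i,1,2}\}$ and $\{u^j,u^{j+1},u^{j,1},u^{j,1,1},u^{j,1,2}\}$; the neighbors $u^i$ of $u$ are all distinct, $u^{i,1}\ne u^{j,1}$ by Proposition~\ref{vertex}(a), and the mixed inequalities $u^{i,1}\ne u^{j,1,1}$, etc., follow because each $u^{i,1},u^{i,1,1},u^{i,1,2}$ is at distance $2$ or $3$ from $u$ with a prescribed prefix, so distinct prefixes force distinct strings — this is precisely the content that Propositions~\ref{vertex} and~\ref{path} are designed to certify. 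For the ``boundary'' stars from Proposition~\ref{star}(e)--(g), disjointness from the interior $T_i$'s is exactly what parts (b)--(d) of Proposition~\ref{vertex} and all of Proposition~\ref{path} guarantee (e.g. $u^{n,i^*}\ne u^{i,1},u^{i,1,1},u^{i,1,2}$ from Proposition~\ref{vertex}(d), and the chain of inequalities $u^{n,(n-2)^*,\dots,j^*}\ne u^{j,1}$ from Proposition~\ref{path}). Once disjointness is in hand, removing $\mathcal S$ deletes all of $N_{H_n}(u)$ (and none of these stars contains $u$ itself, since $u\notin N_{H_n}(u^{i,1})\cup\{u^n,\dots\}$ would create a triangle, contradicting Lemma~\ref{triangle-free}), so $\{u\}$ becomes an isolated component of $H_n-\mathcal S$.

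Finally I would record the bookkeeping: since each interior $T_i$ covers two of the $n$ neighbors and the one or two boundary stars cover the last one or two, the family has size exactly $\lceil n/2\rceil$, giving $\kappa(H_n;K_{1,4})\le\lceil n/2\rceil$. The main obstacle is purely organizational rather than conceptual: getting the case split on $n\bmod 2$ and on which of Proposition~\ref{star}(e),(f),(g) to invoke for the leftover indices right, so that in every case the leftover vertices $u^{n-1},u^n$ are covered by stars that are simultaneously (i) genuine $K_{1,4}$'s and (ii) disjoint from every interior $T_i$ already chosen — and verifying this is exactly what the earlier Propositions~\ref{vertex}, \ref{path}, \ref{star} were set up to make routine. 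I expect the proof to be a short paragraph with a parity case distinction, essentially parallel to Lemma~\ref{3-upper}.
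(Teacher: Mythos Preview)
Your proposal is correct and follows essentially the same construction as the paper: interior stars $T_i=H_n[\{u^i,u^{i,1},u^{i+1},u^{i,1,1},u^{i,1,2}\}]$ for odd $i\le n-3$, together with boundary stars from Proposition~\ref{star}(e)--(g) depending on parity, with disjointness supplied by Proposition~\ref{vertex}. Two small clean-ups: first, your opening line about each star ``absorbing $4$ of the $u^i$'s'' is a slip (each $T_i$ contains only the two neighbors $u^i,u^{i+1}$, as you correctly say later); second, Proposition~\ref{path} is not needed here---the paper invokes only Proposition~\ref{vertex} for disjointness, and indeed the $i^*$-chains of Proposition~\ref{path} do not appear in any of the $K_{1,4}$'s used.
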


\begin{proof}
For any $u\in V(H_n)$, $N_{H_n}(u)=\{u^1,\ldots,u^n\}$. Let $T_i$ be the subgraph induced  by $\{u^i,u^{i,1}, u^{i+1}, u^{i,1,1},  u^{i,1,2}\}$ for $1\leq i \leq n-3$. If $n$ is odd, we let $T_{n-2}$ be the subgraph induced by $\{u^{n-2},u^{n-2,1}, u^{n-1}, u^{n-2,1,1}, u^{n-2,1,1^*}\}$ and $T_n$ the subgraph induced by $\{u^n, u^{n-1,1},u^{n,1^{*}}, u^{n,2^{*}}, u^{n,3^{*}}\}$; and if $n$ is even, let $T_{n-1}$ be the subgraph induced by $\{u^{n-1},$ $u^{n-1,1}, u^{n}, u^{n-1,1,1^{*}}, u^{n-1,1,2^{*}}\}$. Then $T_i$ is a $K_{1,4}$ by Proposition~\ref{star}(d-g) for $1\leq i\leq n$, and $V(T_i)\cap V(T_j)=\emptyset$ by Proposition~\ref{vertex} for $1\leq i\neq j\leq n$.

Set $\mathcal{S} = \{T_1, T_3,\ldots,T_{n-4}, T_{n-2}, T_n\}$ if $n$ is odd; and $\mathcal{S} = \{T_1, T_3,\ldots, T_{n-3}, T_{n-1}\}$ if $n$ is even.
Then, in either case, $H_n-\mathcal{S}$ is disconnected, one component is $\{u\}$ and $|\mathcal{S}| =\lceil\frac{n}{2} \rceil$. Thus, $\kappa(H_{n}; K_{1,4})\leq \lceil\frac{n}{2}\rceil$ and $\kappa^{s}(H_{n}; K_{1,4})\leq \lceil\frac{n}{2}\rceil$.
\end{proof}

Note that $\kappa^{s}(H_{n}; K_{1,4})\leq \kappa(H_{n}; K_{1,4})$, and hence, by Lemmas \ref{lower} and \ref{4-upper}, we have the following result.

\begin{theorem}
$\kappa(H_{n}; K_{1,4})=\kappa^{s}(H_{n}; K_{1,4})=\lceil\frac{n}{2}\rceil$ for $n\geq 4$.
\end{theorem}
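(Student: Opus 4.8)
The final statement to prove is $\kappa(H_{n}; K_{1,4})=\kappa^{s}(H_{n}; K_{1,4})=\lceil\frac{n}{2}\rceil$ for $n\geq 4$, and the plan is simply to assemble it from the pieces the section has already built. The chain of inequalities is $\lceil\frac{n}{2}\rceil \leq \kappa^{s}(H_{n}; K_{1,4}) \leq \kappa(H_{n}; K_{1,4}) \leq \lceil\frac{n}{2}\rceil$, so equality forces all three quantities to equal $\lceil\frac{n}{2}\rceil$.

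\begin{proof}
By definition, $\kappa^{s}(H_{n}; K_{1,4})\leq \kappa(H_{n}; K_{1,4})$. Lemma~\ref{lower} (applied with $r=4$) gives $\kappa^{s}(H_{n}; K_{1,4})\geq \lceil\frac{n}{2}\rceil$, and Lemma~\ref{4-upper} gives $\kappa(H_{n}; K_{1,4})\leq \lceil\frac{n}{2}\rceil$. Combining, $\lceil\frac{n}{2}\rceil\leq \kappa^{s}(H_{n}; K_{1,4})\leq \kappa(H_{n}; K_{1,4})\leq \lceil\frac{n}{2}\rceil$, so all of these are equal to $\lceil\frac{n}{2}\rceil$ for $n\geq 4$.
\end{proof}

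There is no genuine obstacle in this final step: the entire content has already been discharged in Lemma~\ref{lower} (the lower bound, where the real work lives — distinguishing components of size $1$, $2$, and $\geq 3$ and invoking $\kappa_1$, $\kappa_2$ together with the star-neighborhood bounds of Lemmas~\ref{star-u}, \ref{3-star-uv}, \ref{4-star-uv}, \ref{4'-star-uv}) and in Lemma~\ref{4-upper} (the explicit construction of $\lceil n/2\rceil$ pairwise vertex-disjoint copies of $K_{1,4}$, drawn from the families in Proposition~\ref{star}(d)--(g), whose removal isolates a chosen vertex $u$, with disjointness guaranteed by Proposition~\ref{vertex}). So the only thing this theorem's proof needs to do is cite those two lemmas plus the trivial $\kappa^{s}\leq\kappa$ inequality and squeeze. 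If I were to worry about anything, it would only be a sanity check that Lemma~\ref{lower} is indeed stated for $r=4$ (it is, for $3\leq r\leq 4$) and that Lemma~\ref{4-upper} covers all $n\geq 4$ (it does), so the hypotheses line up and no edge case in small $n$ is left uncovered.
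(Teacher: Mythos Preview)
Your proof is correct and matches the paper's approach exactly: the paper simply notes $\kappa^{s}(H_{n}; K_{1,4})\leq \kappa(H_{n}; K_{1,4})$ and cites Lemmas~\ref{lower} and~\ref{4-upper} to squeeze both quantities to $\lceil n/2\rceil$.
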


\section{$\kappa(H_{n}; P_{k})$ and $\kappa^{s}(H_{n}; P_{k})$}

Recall that $\kappa(H_n; P_1)=\kappa^{s}(H_{n}; P_{1})=\kappa(H_n)=n$ for $n\geq3$. So we assume $k\geq 2$.

\subsection{$\kappa(H_{n}; P_{2})$ and $\kappa^{s}(H_{n}; P_{2})$}

\begin{lemma}
\label{book}
If $\kappa(n-1)\geq 2$, then $H_n$ does not consists of $n-1$ quadrilaterals sharing a common edge.
\end{lemma}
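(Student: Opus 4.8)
The plan is to argue by contradiction. Suppose $H_n$ is built from $n-1$ quadrilaterals $Q_1,\dots,Q_{n-1}$ that pairwise share exactly one common edge $(u,v)$; write $Q_i=\langle u,v,a_i,b_i,u\rangle$, so $a_i\in N_{H_n}(v)$, $b_i\in N_{H_n}(u)$ and $(a_i,b_i)\in E(H_n)$. Since $H_n$ is $n$-regular the $b_i$'s exhaust $N_{H_n}(u)\setminus\{v\}$ and the $a_i$'s exhaust $N_{H_n}(v)\setminus\{u\}$; moreover, by Lemma~\ref{triangle-free} no common neighbour of $u$ and $v$ can lie in any page (it would span a triangle with a matching edge), so these two $(n-1)$-element sets are disjoint. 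Thus the configuration is equivalent to a perfect matching of $N_{H_n}(u)\setminus\{v\}$ onto $N_{H_n}(v)\setminus\{u\}$ along edges of $H_n$, and by Lemma~\ref{common-neighbor} the bipartite graph carrying it has maximum degree at most $2$.

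First I would dispose of the case in which $(u,v)$ runs between the halves $0H_{n-1}$ and $1H_{n-1}$. Writing $u=0x$ forces $v=1\phi(x)$, and $v$ is the only neighbour of $u$ outside $0H_{n-1}$, so every $b_i$ has the form $0y_i$ with $y_i\sim_{H_{n-1}}x$ and every $a_i$ has the form $1\phi(y_i)$; then $1\phi(y_i)\sim_{H_{n-1}}1\phi(x)$ gives $\phi(y_i)\sim_{H_{n-1}}\phi(x)$. Since the $y_i$ run over all of $N_{H_{n-1}}(x)$, the permutation $\phi$ would map $N_{H_{n-1}}(x)$ into $N_{H_{n-1}}(\phi(x))$. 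I would refute this directly from the definition of $\phi$: the vertex $w$ obtained from $x$ by complementing its last coordinate is a neighbour of $x$ in $H_{n-1}$ (the residual twist acts on the empty string), whereas $\phi(w)$ differs from $\phi(x)$ in exactly the two coordinates $\kappa(n-1)$ and $n-1$; but every neighbour $\phi(x)^{\,j}$ of $\phi(x)$ in $H_{n-1}$ differs from $\phi(x)$ only within the block of coordinates $\{j,j+1,\dots,j+\kappa(n-1-j)\}$, of length $\kappa(n-1-j)+1\le\kappa(n-1)+1$, which is far too short to contain both $\kappa(n-1)$ and $n-1$. Hence $\phi(w)\notin N_{H_{n-1}}(\phi(x))$, a contradiction. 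This argument needs only $\kappa(n-1)\ge1$, so the same conclusion will also be available for the smaller twisted hypercubes appearing below.

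For the remaining case $u$ and $v$ lie in the same half, say $u=0x$, $v=0y$ with $x\sim_{H_{n-1}}y$. The cross-neighbour $1\phi(x)$ of $u$ and the cross-neighbour $1\phi(y)$ of $v$ must be matched to each other, since each is the other's only admissible partner; this yields the side condition $\phi(x)\sim_{H_{n-1}}\phi(y)$, and deleting that one page leaves $n-2$ quadrilaterals of $H_{n-1}$ pairwise sharing the edge $(x,y)$. Now $(u,v)=(u,u^{j})$ for some $j\in\{2,\dots,n\}$ and the peeled edge is $(x,x^{j-1})$ in $H_{n-1}$. If $2\le j\le n-2$, I would iterate this reduction, removing one leading coordinate at a time and lowering the index; after $j-1$ steps the edge is a cross edge of $H_{n-j+1}$ carrying $n-j$ quadrilaterals, and since $\kappa(n-j)\ge1$ the preceding paragraph gives a contradiction. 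If $j=n$ or $j=n-1$, iteration would only reach $H_2$, where the cross-edge argument is unavailable; instead I would exploit the side condition already obtained at the first step. For $j=n$ the peeled edge complements the last coordinate, so $\phi(x)$ and $\phi(y)$ differ in coordinates $\kappa(n-1)$ and $n-1$ and are non-adjacent in $H_{n-1}$, a contradiction. For $j=n-1$ the peeled edge complements coordinate $n-2$; here $\kappa(n-1)\ge2$ enters decisively, as it is exactly what guarantees that $\phi$ still moves a ``low'' coordinate, so that $\phi(x)$ and $\phi(y)$ differ in coordinates $\kappa(n-1)-1$ and $n-2$, and once more these two coordinates cannot lie in a common block $\{j,\dots,j+\kappa(n-1-j)\}$, contradicting $\phi(x)\sim_{H_{n-1}}\phi(y)$.

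The delicate point is this last case $j=n-1$, together with the bookkeeping that decides, edge by edge, whether the contradiction already appears at the top level or whether one must descend to a genuine cross edge. The hypothesis $\kappa(n-1)\ge2$ rather than merely $\kappa(n-1)\ge1$ is precisely what excludes the $j=n-1$ configuration: when $\kappa(n-1)=1$ the permutation $\phi$ is too mild to break the would-be matching, and in that regime such books do occur --- for instance $H_3$ contains two quadrilaterals sharing a common edge while $\kappa(2)=1$ --- so the argument must be organised so that the hypothesis is invoked in exactly that place.
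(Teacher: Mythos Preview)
Your proposal is correct but follows a genuinely different route from the paper.

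The paper's proof is a direct bitwise computation: for an arbitrary vertex $u$ with neighbours $u^1,\dots,u^n$, it verifies coordinate by coordinate that $(u^i)^p\neq(u^n)^q$ for all admissible $p,q$ when $1\le i\le n-2$, and that $(u^1)^p\neq(u^{n-1})^q$ for all admissible $p,q$. In other words, it shows that $u^n$ shares no common neighbour with any $u^i$ ($i\le n-2$) except $u$, and that $u^{n-1}$ shares no common neighbour with $u^1$ except $u$. This immediately blocks a full book on any spine $(u,u^j)$: if $j\le n-2$ or $j=n$ the pair $(u^j,u^n)$ or $(u^1,u^n)$ has no second common neighbour, and if $j=n-1$ the pair $(u^1,u^{n-1})$ has none. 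The hypothesis $\kappa(n-1)\ge2$ enters only in the last computation, when comparing $(u^1)^{n-1}$ with $(u^{n-1})^1$.

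Your argument is structural rather than computational: you exploit the recursive build of $H_n$ to reduce a same-half spine to a smaller hypercube, ultimately reaching a cross edge where you show $\phi$ fails to send the last-coordinate neighbour of $x$ to a neighbour of $\phi(x)$; and you treat the two extreme indices $j=n,n-1$ separately via the forced side condition $\phi(x)\sim_{H_{n-1}}\phi(y)$, isolating $j=n-1$ as the unique place where $\kappa(n-1)\ge2$ is genuinely needed. The paper's approach is shorter and avoids any induction or case splitting into cross versus non-cross edges, at the price of opaque coordinate chasing. Your approach is longer but more conceptual: it explains the result through the failure of $\phi$ to be a local automorphism, and it makes transparent why the lemma is sharp. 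One small point to tighten: your ``block too short'' step implicitly uses that $\kappa$ is non-decreasing and that $2\kappa(n-1)<n-1$; both hold in the regime $\kappa(n-1)\ge2$ but deserve a one-line justification.
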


\begin{proof}
For any $u=u_1u_2 \ldots u_n \in V(H_n)$, we show that for any $1\leq i\leq n-2$, $u^i$ and $u^n$ have no other common neighbors other than $u$,
and $u^{n-1}$ and $u^1$ have no other common neighbors other than $u$.

Recall that $u^{i}=u_1 \cdots u_{i-1} \overline{u_i} (u_{i+1}\oplus u_{n-\kappa_i+1}) \cdots (u_{i+\kappa_i}\oplus u_{n})u_{i+\kappa_i+1} \cdots u_{n-1}u_n$, where $\kappa_i=\kappa(n-i)$.

First we assume $i\leq n-2$. We only need to show that $(u^i)^j\neq (u^n)^k$ for $1\leq j\neq i\leq n$ and $1\leq k\leq n-1$. If  $1\leq j\neq i\leq n-1$, then $(u^i)^j[n]=u_n\neq \overline{u_n}=(u^n)^k[n]$, thus $(u^i)^j\neq (u^n)^k$. Therefore $j=n$. If $i\neq k$, then $(u^i)^n[i]\neq (u^n)^k[i]$; and if $i=k$, then $(u^i)^n[i+\kappa_i]=u_{i+\kappa_i}\oplus u_n\neq u_{i+\kappa_i}\oplus \overline{u_n}=(u^n)^i[i+\kappa_i]$. Thus $(u^i)^n\neq (u^n)^k$.

Now we assume $i=n-1$. We only need to show that $(u^1)^{p}\neq (u^{n-1})^q$ for $2\leq p\leq n$ and $1\leq q\neq n-1\leq n$. If $2\leq p\leq n-2$, then
$(u^1)^p[n-1,n]=u_{n-1}u_n\neq \overline{u_{n-1}}u_n=(u^{n-1})^q[n-1,n]$ for $1\leq q\leq n-2$ and $(u^1)^p[n-1,n]=u_{n-1}u_n\neq\overline{u_{n-1}} ~\overline{u_n}=(u^{n-1})^n[n-1,n]$, and thus $(u^1)^{p}\neq (u^{n-1})^q$. If $p=n$, then $(u^1)^n[n-1,n]=u_{n-1}\overline{u_n}\neq \overline{u_{n-1}}u_n=(u^{n-1})^q[n-1,n]$ for $1\leq q\leq n-2$ and $(u^1)^n[n-1,n]=u_{n-1}\overline{u_n}\neq\overline{u_{n-1}} ~\overline{u_n}=(u^{n-1})^n[n-1,n]$, and thus $(u^1)^{n}\neq (u^{n-1})^q$. Therefore $p=n-1$. If $q\neq 1$, then $(u^1)^{n-1}[1]\neq (u^{n-1})^q[1]$; and if $q=1$, then
$(u^{n-1})^1=\overline{u_1}(u_2\oplus u_{n-\kappa_1+1})\cdots (u_{\kappa_1-1}\oplus u_{n-2})(u_{\kappa_1}\oplus \overline{u_{n-1}})(u_{\kappa_1+1}\oplus u_{n})u_{\kappa_1+2}\cdots u_{n-2}\overline{u_{n-1}}u_n$ as $\kappa_1=\kappa(n-1)\ge 2$ and thus $(u^1)^{n-1}[\kappa_1]=(u_{\kappa_1}\oplus u_{n-1})\neq (u_{\kappa_1}\oplus \overline{u_{n-1}})
=(u^{n-1})^{1}[\kappa_1]$. Hence $(u^1)^{n-1}\neq (u^{n-1})^q$.
\end{proof}

\begin{lemma}
\label{P2-upper}
$\kappa(H_{n}; P_2)\leq n-1$ if $\kappa(n-1)=1$ and $\kappa(H_{n}; P_2)\leq n$ if $\kappa(n-1)\geq 2$.
\end{lemma}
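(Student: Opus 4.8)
## Proof Proposal for Lemma \ref{P2-upper}

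The plan is to treat the two cases by two different constructions. In both we exhibit an explicit $P_2$-structure-cut; the difference is what component we split off. When $\kappa(n-1)=1$ we isolate an \emph{edge}, and when $\kappa(n-1)\ge 2$ we isolate a \emph{vertex}. The common mechanism: fix any $u=u_1\cdots u_n\in V(H_n)$, so $N_{H_n}(u)=\{u^1,\dots,u^n\}$. If $(x,y)$ is an edge of $H_n$, then $|N_{H_n}(\{x,y\})|=2n-2$ (as $x,y$ have no common neighbour by Lemma~\ref{triangle-free}), and deleting these $2n-2$ vertices isolates $(x,y)$ as a $K_2$-component while leaving $2^n-2n\ge 1$ further vertices, hence disconnects $H_n$. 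Since two neighbours of $x$ (resp.\ of $y$) are non-adjacent, the only edges contained in $N_{H_n}(\{x,y\})$ are the ``pages'' $(a,b)$ with $a\in N_{H_n}(x)\setminus\{y\}$, $b\in N_{H_n}(y)\setminus\{x\}$, $a\sim b$, i.e.\ the middle edges of quadrilaterals on the spine $(x,y)$. Covering all $2n-2$ vertices of $N_{H_n}(\{x,y\})$ by $n-1$ vertex-disjoint pages is the same as finding $n-1$ internally disjoint quadrilaterals through $(x,y)$, and then these $n-1$ pages form a $P_2$-structure-cut; this is exactly the ``book'' whose non-existence (when $\kappa(n-1)\ge2$) is the content of Lemma~\ref{book}.

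\textbf{Case $\kappa(n-1)=1$.} From the definition of $\kappa$ one first checks $\kappa(m)=1$ for all $2\le m\le n-1$ (the function $\lceil\log_2 m-2\log_2\log_2 m\rceil$ is maximised on $\{2,\dots,n-1\}$ at $m=2$ or $m=n-1$), so every $\kappa_i=\kappa(n-i)$ occurring below equals $1$ and $\kappa_{n-1}=\kappa(1)=0$; in particular $\phi$ acts through a single bit in all the relevant places. I take the spine to be $(u,u^{n-1})$. For $1\le i\le n-2$ I compute directly, from the definitions of $u^i$, $(u^i)^{n-1}$ and $(u^{n-1})^i$, that
\[
(u^i)^{n-1}=(u^{n-1})^i=:b_i,
\]
so $b_i$ is a common neighbour of $u^i$ and $u^{n-1}$ other than $u$; and I set $b_n:=u^{n-1,1}$, which is a common neighbour of $u^n$ and $u^{n-1}$ by Proposition~\ref{neighbor}. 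Reading off the explicit strings, $b_1,\dots,b_{n-2},b_n$ are pairwise distinct (the position of the first flipped bit separates $b_1,\dots,b_{n-2}$, while $b_n$ differs from all of them in the last bit), and each $b_i$ differs from every $u^j$ (else $u,u^j,u^{n-1}$ would form a triangle, as $b_i\sim u^{n-1}$). Hence the $n-1$ edges $(u^i,b_i)$ with $i\in\{1,\dots,n-2,n\}$ are pairwise vertex-disjoint and their vertex set is exactly $N_{H_n}(\{u,u^{n-1}\})$, so deleting them isolates the edge $(u,u^{n-1})$; by the previous paragraph this gives $\kappa(H_n;P_2)\le n-1$.

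\textbf{Case $\kappa(n-1)\ge 2$.} Now I isolate the single vertex $u$: I must delete all of $N_{H_n}(u)=\{u^1,\dots,u^n\}$, and since no edge can contain two neighbours of $u$, each edge used contributes exactly one of them. For $1\le i\le n-1$ take the edge $(u^i,u^{i,1})$; the vertices $u^{1,1},\dots,u^{n-1,1}$ are pairwise distinct by Proposition~\ref{vertex}(a), each adjacent to $u^i$, and none equals any $u^j$ (triangle-freeness). By the proof of Lemma~\ref{book}, for $j\le n-2$ the vertices $u^j$ and $u^n$ have no common neighbour other than $u$, so $u^{j,1}\notin N_{H_n}(u^n)$; since $|N_{H_n}(u^n)\setminus\{u\}|=n-1\ge 2$, we may choose $w\in N_{H_n}(u^n)\setminus(\{u\}\cup\{u^{1,1},\dots,u^{n-1,1}\})$ and add the edge $(u^n,w)$. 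These $n$ edges are pairwise vertex-disjoint, their removal isolates $u$, and $2^n-2n\ge1$ vertices remain elsewhere; hence $\kappa(H_n;P_2)\le n$.

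The main obstacle is the computation in the case $\kappa(n-1)=1$: establishing $(u^i)^{n-1}=(u^{n-1})^i$ for every $1\le i\le n-2$ — this is precisely the ``converse'' of the non-existence statements proved for Lemma~\ref{book}, and it is the step that breaks when $\kappa(n-1)\ge 2$ — and then checking that the $n-1$ page-vertices are pairwise distinct and disjoint from $N_{H_n}(u)$. These are elementary bit-string manipulations, but one must track carefully the ranges over which the permutation $\phi$ reshuffles bits (and in particular use that $\kappa(m)=1$ throughout the relevant range).
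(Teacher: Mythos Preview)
Your proof is correct and follows essentially the same approach as the paper: in Case $\kappa(n-1)=1$ you isolate the edge $(u,u^{n-1})$ via the $n-1$ ``page'' edges of the book on that spine (the paper does the identical construction with the concrete choice $u=0\cdots 0$), and in Case $\kappa(n-1)\ge 2$ you isolate $u$ via the edges $(u^i,u^{i,1})$ for $i\le n-1$ together with one further edge at $u^n$ (the paper makes the explicit choice $w=u^{n,1^*}$, while you argue existentially using Lemma~\ref{book}). Your disjointness checks are in fact a bit more carefully spelled out than the paper's.
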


\begin{proof}
we distinguish cases pertaining to the value of $\kappa(n)$ in the following.

\vskip .2cm
{\bf Case 1.} $\kappa(n-1)=1$.

In this case, we set $u=00 \cdots 0$. Then $u^{i}=0 \cdots 0 u_i 0 \cdots 0$, where $u_i=1, 1\leq i \leq n$. Let $v=u^{n-1}=0 \cdots 010$, then $v^{i}=0 \cdots 0 v_i 0 \cdots 010$, where $v_i=1, 1\leq i \leq n-2$; $v^{n}=0 \cdots 011$. Note that if $\kappa(n-1)=1$, then $(u^{i},v^{i})\in E(H_n)$; and hence $C_4=\langle u,u^{i},v^{i},v,u\rangle$ is a cycle of length 4.

Set $\mathcal{S} = \{u^iv^{i}: 1\leq i \leq n$ and $i\neq n-1\}$.
Then, $H_n-\mathcal{S}$ is disconnected, one component is $\{u,v\}$ and $|\mathcal{S}| =n-1$. Thus, $\kappa(H_{n}; P_2)\leq n-1$.

\vskip .2cm
{\bf Case 2.} $\kappa(n-1)\geq 2$.

\vskip .2cm

In this case, for any $u=u_1u_2 \ldots u_n\in V(H_n)$, $N_{H_n}(u)=\{u^1,u^2,\ldots,u^n\}$. By Lemma~\ref{book}, $H_n$ does not consists of $n-1$ quadrilaterals sharing a common edge. So we can set that $T_i$ be the subgraph induced by $\{u^i,u^{i,1}\}$ for $1\leq i \leq n-1$, then $T_i$ is isomorphic to $P_2$.

Set $\mathcal{S} = \{T_1, T_2,\ldots, T_{n-1}, T_n\}$, where $T_n$ is the subgraph induced by $\{u^n, u^{n,1^{*}}\}$.
Then, $H_n-\mathcal{S}$ is disconnected, one component is $\{u\}$ and $|\mathcal{S}| =n$. Thus, $\kappa(H_{n}; P_2)\leq n$.
\end{proof}

\begin{lemma}
\label{P2-lower}
$\kappa^{s}(H_{n}; P_2)\geq n-1$ if $\kappa(n-1)=1$ and $\kappa^{s}(H_{n}; P_2)\geq n$ if $\kappa(n-1)\geq 2$.
\end{lemma}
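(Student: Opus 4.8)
The plan is to prove the lower bound by a minimum-counterexample / extremal-component argument parallel to Lemma~\ref{lower}, but now adapted to $P_2$-substructure cuts, whose elements are copies of $K_1$ or $P_2$, i.e.\ single vertices or edges. Suppose $\mathcal{F}$ is a minimum $P_2$-substructure-cut of $H_n$, so each $F_i\in\mathcal{F}$ is a vertex or an edge, $H_n-\mathcal{F}$ is disconnected, and $|\mathcal{F}|$ is smaller than the claimed bound. Let $C$ be a smallest component of $H_n-\mathcal{F}$; then $N_{H_n}(V(C))\subseteq \bigcup_i V(F_i)$, and since each $F_i$ has at most two vertices, $|V(\mathcal{F})|\le 2|\mathcal{F}|$.

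First I would dispose of the case $|V(C)|\ge 3$ exactly as in Lemma~\ref{lower}: by Lemma~\ref{kappa1} and Lemma~\ref{kappa2}, separating a component of order $\ge 2$ (resp.\ $\ge 3$) requires at least $2n-2$ (resp.\ $3n-5$) vertices, and $2|\mathcal{F}|$ would have to be that large, contradicting $|\mathcal{F}|\le n$ (for $n$ large enough; small cases $n\le$ some bound handled directly). The case $|V(C)|=1$, say $C=\{w\}$, gives $n=|N_{H_n}(w)|\le |V(\mathcal F)\cap N_{H_n}(w)|$. Here the key counting point is that a $P_2$ (an edge $(a,b)$) can contain at most \emph{two} neighbors of $w$, and by Lemma~\ref{triangle-free} ($H_n$ triangle-free) if an edge contains two neighbors of $w$ it cannot itself contain $w$; a $K_1$ contains at most one neighbor of $w$. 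So each element of $\mathcal F$ covers at most two vertices of $N_{H_n}(w)$, forcing $|\mathcal F|\ge \lceil n/2\rceil$. This is \emph{weaker} than the claimed $n-1$ or $n$, so the isolated-vertex case is not the extremal one and must be combined with a sharper analysis — in fact the bound in the lemma must come from the case $|V(C)|=2$.

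So the heart of the proof is $|V(C)|=2$, i.e.\ $C$ is an edge $(u,v)$ with $|N_{H_n}(\{u,v\})|=2n-2$, and every vertex of $N_{H_n}(\{u,v\})$ lies in some $F_i$. For a $K_1$: one neighbor covered. For a $P_2$ $(a,b)$: I claim it covers at most two vertices of $N_{H_n}(\{u,v\})$, and covers exactly two only in restricted configurations. Indeed $a$ has at most one neighbor in $\{u,v\}$ unless $a$ is a common neighbor of $u$ and $v$ — but $u,v$ are adjacent, so by triangle-freeness they have no common neighbor, hence $a$ contributes at most one. Likewise $b$. Thus an edge covers $\le 2$ and $|\mathcal F|\ge n-1$ always. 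To push to $n$ when $\kappa(n-1)\ge 2$, I would show that an edge $F_i=(a,b)$ covering two neighbors of $\{u,v\}$ — one neighbor $a$ of $u$, one neighbor $b$ of $v$ — forces a $4$-cycle $\langle u,a,b,v,u\rangle$; if $\kappa(n-1)\ge 2$, Lemma~\ref{book} says $H_n$ does not contain $n-1$ quadrilaterals sharing the common edge $(u,v)$, so at most $n-2$ of the edges in $\mathcal F$ can be of this "straddling" type, the remaining $\ge n$ neighbors needing elements covering $\le 1$ each, giving $|\mathcal F|\ge n-2 + \lceil(2n-2 - 2(n-2))/1\rceil$? — I would recount carefully: $2n-2$ neighbors, say $t$ straddling edges (each covering $2$) with $t\le n-2$, the rest covered one per element, so $|\mathcal F|\ge t + (2n-2-2t) = 2n-2-t \ge 2n-2-(n-2)=n$. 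When $\kappa(n-1)=1$ the straddling edges $u^iv^i$ do exist for $i\ne n-1$ (as in Lemma~\ref{P2-upper}), so only $t\le n-1$ is available, giving $|\mathcal F|\ge 2n-2-(n-1)=n-1$, matching the bound.

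The main obstacle I anticipate is the bookkeeping in the $|V(C)|=2$ case: precisely justifying that a $P_2$ covering two distinct neighbors of $\{u,v\}$ must be a "straddling" edge completing a $4$-cycle on the edge $(u,v)$ — ruling out the possibility that its two endpoints are two distinct neighbors of the \emph{same} vertex $u$ (which would also be two neighbors but need not form a $C_4$ through $(u,v)$). Handling this requires observing that two adjacent neighbors $u^i,u^j$ of $u$ would, together with $u$, be part of a triangle-free configuration; one shows via Lemma~\ref{triangle-free} and the common-neighbor structure (Proposition~\ref{neighbor}, Lemma~\ref{common-neighbor}) that such an edge still contributes a bounded amount and folds into the same count. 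Once that case analysis is pinned down, combining it with Lemma~\ref{book} to get the extra $+1$ when $\kappa(n-1)\ge 2$ is the decisive step, and the small-$n$ base cases are checked directly against Figure~1.
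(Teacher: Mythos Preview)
Your main error is in the $|V(C)|=1$ case. You claim a $P_2=(a,b)$ can contain up to two neighbours of the isolated vertex $w$, but this is impossible: if $a,b\in N_{H_n}(w)$ and $(a,b)\in E(H_n)$, then $\{w,a,b\}$ is a triangle, contradicting Lemma~\ref{triangle-free}. Hence every element of $\mathcal F$ (whether a $K_1$ or a $P_2$) contains at most \emph{one} neighbour of $w$, and the isolated-vertex case alone forces $|\mathcal F|\ge n$. This is exactly how the paper handles it (Subcases~1.1 and~2.1), and it shows that the singleton case is never the bottleneck --- contrary to your diagnosis that ``the isolated-vertex case is not the extremal one''.

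The same triangle-freeness observation dissolves the ``main obstacle'' you flag for $|V(C)|=2$: if $(a,b)\in\mathcal F$ has both endpoints in $N_{H_n}(\{u,v\})$, then $a,b$ cannot both lie in $N(u)$ (triangle) nor both in $N(v)$, so one is a neighbour of $u$ and the other of $v$, and $\langle u,a,b,v,u\rangle$ is automatically a $4$-cycle on the edge $(u,v)$. Your straddling-edge count $|\mathcal F|\ge 2n-2-t$ with $t\le n-2$ (via Lemma~\ref{book}) then goes through cleanly and is precisely the paper's Subcase~2.2, where the terse ``$\frac{2n-2}{2}+1=n$'' encodes exactly this argument. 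For $\kappa(n-1)=1$ the paper does not even split off $|V(C)|=2$: it simply uses $\kappa_1(H_n)=2n-2$ and $|V(\mathcal F)|\le 2(n-2)$ to handle all $|V(C)|\ge 2$ at once. So once the $|V(C)|=1$ miscount is corrected, your argument collapses to the paper's.
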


\begin{proof}
we distinguish cases pertaining to the value of $\kappa(n)$ in the following.

\vskip .2cm
{\bf Case 1.} $\kappa(n-1)=1$.

Let $\mathcal{F}=\{\underbrace{P_1,\ldots,P_1}_{x_1}, \underbrace{P_2,\ldots,P_2}_{x_2}\}$ and $|\mathcal{F}| =x_1+x_2\leq n-2$ for $x_1, x_2\geq 0$. Suppose to the contrary that $H_n-\mathcal{F}$ is disconnected, then $H_n-\mathcal{F}$ has at least two components. Let $C$ be the smallest component of $H_n-\mathcal{F}$. We consider following two cases.

\vskip .2cm
{\bf Subcase 1.1.} $|V(C)| =1$.

In this subcase, we set $V(C) =\{w\}$. Note that $|N_{H_n}(w)|=n$. By Lemma~\ref{triangle-free}, every element in $\mathcal{F}$ contains at most one neighbor of $w$. Thus, we have to delete at least $n$ elements of $\mathcal{F}$ to isolate $C$. But it is impossible since $|\mathcal{F}|\leq n-2<n$.

\vskip .2cm
{\bf Subcase 1.2.} $|V(C)| \geq2$.

In this subcase, by Lemma~\ref{kappa1}, $\kappa_{1}(H_n)=2n-2$. This implies that we have to delete at least $2n-2$ vertices to isolate $C$. Since $|\mathcal{F}|\leq n-2$, we have $|V(\mathcal{F})|\leq 2(n-2) =2n-4 <2n-2$, a contradiction. Thus, $\kappa^{s}(H_{n}; P_2)\geq n-1$ and $\kappa(H_{n}; P_2)\geq n-1$.

\vskip .2cm
{\bf Case 2.} $\kappa(n-1)\geq 2$.

In this case, $n>5$. Let $\mathcal{F}=\{\underbrace{P_1,\ldots,P_1}_{x_1}, \underbrace{P_2,\ldots,P_2}_{x_2}\}$ and $|\mathcal{F}| =x_1+x_2\leq n-1$ for $x_1, x_2\geq0$. Suppose to the contrary that $H_n-\mathcal{F}$ is disconnected, then $H_n-\mathcal{F}$ has at least two components. Let $C$ be the smallest component of $H_n-\mathcal{F}$. We consider following three cases.

\vskip .2cm
{\bf Subcase 2.1.} $|V(C)| =1$.

In this subcase, we set $V(C) =\{w\}$. Note that $|N_{H_n}(w)|=n$. By Lemma~\ref{triangle-free}, every element in $\mathcal{F}$ contains at most one neighbor of $w$. Thus, we have to delete at least $n$ elements of $\mathcal{F}$ to isolate $C$. But it is impossible since $|\mathcal{F}|\leq n-1<n$.

\vskip .2cm
{\bf Subcase 2.2.} $|V(C)| =2$.

In this subcase, $C$ is an edge of $H_n$ and $|N_{H_n}(C)|=2n-2$.
By Lemma~\ref{kappa1} and  Lemma~\ref{book}, we have to delete at least $\frac{2n-2}{2}+1=n$ elements of $\mathcal{F}$ to separate $C$. However, $|\mathcal{F}| \leq n-1< n$, a contradiction.

\vskip .2cm
{\bf Subcase 2.3.} $|V(C)| \geq 3$.

In this subcase, by Lemma~\ref{kappa2}, $\kappa_{2}(H_n)=3n-5$. This implies that we have to delete at least $3n-5$ vertices to isolate $C$. Since $|\mathcal{F}|\leq n-1$, we have $|V(\mathcal{F})|\leq 2(n-1) =2n-2 <3n-5$, a contradiction with $n> 5$. Thus, $\kappa^{s}(H_{n}; P_2)\geq n$ and $\kappa(H_{n}; P_2)\geq n$.
\end{proof}

Note that $\kappa^{s}(H_{n};P_2)\leq \kappa(H_{n}; P_2)$, and hence, by Lemmas~\ref{P2-upper} and~\ref{P2-lower}, we have the following result.

\begin{theorem}
$\kappa(H_{n}; P_2) =\kappa^{s}(H_{n}; P_2)= n-1$ if $\kappa(n-1)=1$; $\kappa(H_{n}; P_2) =\kappa^{s}(H_{n}; P_2)= n$ if $\kappa(n-1)\geq 2$.
\end{theorem}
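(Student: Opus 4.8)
The plan is to obtain the result purely by sandwiching, since both directions have already been prepared. Recall that for any connected subgraph $T$ of any graph $G$ we have $\kappa^{s}(G;T)\le\kappa(G;T)$, because an element isomorphic to $T$ is in particular a connected subgraph of $T$, so every $T$-structure-cut is also a $T$-substructure-cut. Combining this with Lemma~\ref{P2-upper} and Lemma~\ref{P2-lower}, I would split into the two stated regimes and chain the inequalities.

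First, suppose $\kappa(n-1)=1$. Lemma~\ref{P2-lower} gives $\kappa^{s}(H_n;P_2)\ge n-1$, and Lemma~\ref{P2-upper} gives $\kappa(H_n;P_2)\le n-1$; together with $\kappa^{s}(H_n;P_2)\le\kappa(H_n;P_2)$ this forces
$$n-1\le\kappa^{s}(H_n;P_2)\le\kappa(H_n;P_2)\le n-1,$$
so all three quantities equal $n-1$. Next, suppose $\kappa(n-1)\ge 2$. Then Lemma~\ref{P2-lower} gives $\kappa^{s}(H_n;P_2)\ge n$ and Lemma~\ref{P2-upper} gives $\kappa(H_n;P_2)\le n$, whence
$$n\le\kappa^{s}(H_n;P_2)\le\kappa(H_n;P_2)\le n$$
and all three equal $n$. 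This exhausts the cases, since $\kappa(n-1)$ is always a positive integer (for $n\ge3$), so the theorem follows.

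There is no real obstacle left at this stage: the content lives entirely in the supporting lemmas. The delicate point is Lemma~\ref{book}, which rules out $H_n$ containing $n-1$ quadrilaterals glued along a common edge when $\kappa(n-1)\ge2$; this is exactly what prevents a $P_2$-substructure-cut of size $\tfrac{2n-2}{2}=n-1$ from isolating an edge in Subcase~2.2 of Lemma~\ref{P2-lower}, and it is also what allows the construction of $n$ pairwise vertex-disjoint copies of $P_2$ around a single vertex in Case~2 of Lemma~\ref{P2-upper}. The other essential ingredients are the extra-connectivity bounds $\kappa_1(H_n)=2n-2$ (Lemma~\ref{kappa1}) and $\kappa_2(H_n)=3n-5$ (Lemma~\ref{kappa2}), which handle the larger components. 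Given all of these, the proof of the theorem itself is the short assembly above.
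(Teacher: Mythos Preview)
Your proposal is correct and matches the paper's own proof essentially line for line: the paper likewise derives the theorem immediately from the inequality $\kappa^{s}(H_n;P_2)\le\kappa(H_n;P_2)$ together with Lemmas~\ref{P2-upper} and~\ref{P2-lower}. Your additional commentary on where the real work lies (Lemma~\ref{book} and the extra-connectivity results) is accurate but goes beyond what the paper spells out at this point.
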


\subsection{$\kappa(H_{n}; P_{k})$ and $\kappa^{s}(H_{n}; P_{k})$ with $k\geq 3$}

\begin{lemma}
\label{v-p}
Let $P_{k}$ be a path of order $k$ in $H_{n}$ with $1 \leq k \leq n$. If $v$ is a vertex of $H_{n}-P_{k}$, then $|N_{H_{n}}(v) \cap V(P_{k})| \leq \lceil \frac{k}{2} \rceil$.
\end{lemma}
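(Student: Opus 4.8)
The plan is to let $P_k=\langle w_1,w_2,\ldots,w_k\rangle$ and count, for each $w_j$, whether $v$ can be adjacent to it. Since $H_n$ is triangle-free (Lemma~\ref{triangle-free}), $v$ cannot be adjacent to two consecutive vertices $w_j,w_{j+1}$ of the path, because $w_jw_{j+1}$ is an edge and $\{v,w_j,w_{j+1}\}$ would form a triangle. Hence the set $J=\{j: (v,w_j)\in E(H_n)\}$ is an \emph{independent set of indices} in the path $\langle 1,2,\ldots,k\rangle$, i.e. no two elements of $J$ are consecutive. A maximum independent set in a path on $k$ vertices has size $\lceil k/2\rceil$, which already gives $|N_{H_n}(v)\cap V(P_k)|=|J|\le\lceil k/2\rceil$.

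The only subtlety is to make sure we are not missing a tighter bound that the authors might actually need elsewhere; but as stated the triangle-free argument suffices, so I would present exactly that. In more detail: first I would fix the path and the vertex $v\notin V(P_k)$; then observe via Lemma~\ref{triangle-free} that adjacency of $v$ to $w_j$ forbids adjacency to $w_{j-1}$ and $w_{j+1}$; then invoke the elementary fact that a subset of $\{1,\ldots,k\}$ containing no two consecutive integers has at most $\lceil k/2\rceil$ elements (the ``odd-indexed'' positions $1,3,5,\ldots$ realize the maximum). Concluding, $|N_{H_n}(v)\cap V(P_k)|\le\lceil k/2\rceil$.

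I do not expect any real obstacle here: the statement is a clean consequence of triangle-freeness plus a one-line combinatorial bound on independent sets in a path, and Lemma~\ref{common-neighbor} is not even needed. If one wanted a fully self-contained count, one can phrase it as: partition $\{w_1,\ldots,w_k\}$ into the $\lfloor k/2\rfloor$ consecutive pairs $\{w_{2t-1},w_{2t}\}$ (with $w_k$ left over when $k$ is odd); $v$ has at most one neighbor in each pair and at most one in the leftover singleton, giving at most $\lfloor k/2\rfloor + (k\bmod 2) = \lceil k/2\rceil$ neighbors in total. Either formulation is a couple of lines, so the write-up is routine.
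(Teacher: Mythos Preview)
Your proposal is correct and follows essentially the same approach as the paper: the paper's proof is a two-line argument that by Lemma~\ref{triangle-free} the vertex $v$ can be adjacent to at most one of any two consecutive vertices on $P_k$, and hence the bound follows. Your pairing argument is exactly the explicit version of this, just written out in more detail than the paper bothers to.
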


\begin{proof}
By Lemma~\ref{triangle-free}, $v$ can be adjacent to at most one vertex of any two consecutive vertices on $P_k$. Thus, the lemma follows.
\end{proof}

\begin{lemma}
\label{uv-P}
Let $P_{k}$ be a path of order $k$ in $H_{n}$ with $3 \leq k \leq n$. If $u$ and $v$ are two adjacent vertices of $H_{n}-P_{k}$, then $|N_{H_{n}}(\{u,v\}) \cap V(P_{k})| \leq 2\lfloor\frac{k}{3}\rfloor + (k(mod ~3)) $. Moreover, $|N_{H_{n}}(\{u,v\}) \cap V(P_{k})|\leq k-1$.
\end{lemma}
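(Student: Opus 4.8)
The plan is to partition the vertices of the path into consecutive triples and to prove that each triple contributes at most two vertices to $N_{H_n}(\{u,v\})$. Write $P_k=\langle p_1,p_2,\ldots,p_k\rangle$, and for $1\le j\le\lfloor k/3\rfloor$ put $B_j=\{p_{3j-2},p_{3j-1},p_{3j}\}$; the last $k\bmod 3$ vertices form a set $B_0$ with $|B_0|=k\bmod 3$. Since $B_0,B_1,\ldots,B_{\lfloor k/3\rfloor}$ partition $V(P_k)$, once we know $|N_{H_n}(\{u,v\})\cap B_j|\le 2$ for all $j\ge 1$ we obtain $|N_{H_n}(\{u,v\})\cap V(P_k)|\le 2\lfloor k/3\rfloor+(k\bmod 3)$, and the second assertion follows from the identity $2\lfloor k/3\rfloor+(k\bmod 3)=k-\lfloor k/3\rfloor\le k-1$, where the last inequality uses $k\ge 3$.

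It remains to bound a single triple. Fix $j\ge 1$ and write the three vertices of $B_j$ as $a,b,c$ in the order in which they occur on $P_k$, so that $a\sim b$ and $b\sim c$ in $H_n$. Suppose, for contradiction, that $a,b,c\in N_{H_n}(\{u,v\})$. By Lemma~\ref{triangle-free}, neither $u$ nor $v$ is adjacent to two consecutive vertices of $P_k$, so each of $N_{H_n}(u)\cap\{a,b,c\}$ and $N_{H_n}(v)\cap\{a,b,c\}$ is contained in $\{a,c\}$ or in $\{b\}$. These two sets cannot both be contained in $\{b\}$ (then $a$ is not covered) nor both contained in $\{a,c\}$ (then $b$ is not covered); hence, after possibly interchanging $u$ and $v$, we have $N_{H_n}(v)\cap\{a,b,c\}=\{b\}$ and $\{a,c\}\subseteq N_{H_n}(u)$.

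Now look at the pair $u,b$. Since $a\sim u$ and $a\sim b$, $a$ is a common neighbour of $u$ and $b$; since $c\sim u$ and $c\sim b$, so is $c$; and since $v\sim u$ and $v\sim b$, so is $v$. As $a,c\in V(P_k)$ are distinct and $v\notin V(P_k)$, these are three distinct common neighbours of $u$ and $b$, contradicting Lemma~\ref{common-neighbor}. Thus $|N_{H_n}(\{u,v\})\cap B_j|\le 2$, which finishes the argument.

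I expect no real obstacle here: the arithmetic and the partition bookkeeping are routine, and the only delicate point is the short case check showing that a fully covered consecutive triple forces the configuration ``one of $u,v$ covers the two outer vertices and the other covers the middle vertex'', at which stage Lemma~\ref{common-neighbor} applies directly.
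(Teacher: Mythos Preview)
Your proof is correct and follows essentially the same approach as the paper: both arguments reduce to the key claim that any three consecutive path vertices contain at most two members of $N_{H_n}(\{u,v\})$, and your justification of that claim matches the paper's almost verbatim. The only cosmetic difference is that you sum the triple bound directly via a partition, whereas the paper derives the same formula by a short induction on $k$.
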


\begin{proof}
Let $P_k=\langle v_1, v_2, \ldots, v_k\rangle$. We first show that

~~~~~~~~~~~~~~~~~~~~$|N_{H_{n}}(\{u,v\}) \cap \{v_{i-1}, v_i, v_{i+1} \}| \leq 2$ for $2\leq i \leq k-1$.~~~~~~~~~~~~~~~~~~~($*$)\\
 Otherwise, we suppose that $|N_{H_{n}}(\{u,v\}) \cap \{v_{i-1}, v_i, v_{i+1} \}| \geq 3 $ for some $i ~(2\leq i \leq k-1)$. Without loss of generality, we assume that $|N_{H_{n}}(u) \cap \{v_{i-1}, v_i, v_{i+1} \}| \geq 2$. By Lemma~\ref{triangle-free}, we have $uv_{i-1}, uv_{i+1}\in E(H_n)$, and $uv_i\notin E(H_n) $. Moreover, $|N_{H_{n}}(u) \cap \{v_{i-1}, v_i, v_{i+1} \}| = 2$. Then $|N_{H_{n}}(v) \cap \{v_{i-1}, v_i, v_{i+1} \}| \geq 1$ and $vv_{i-1}, vv_{i+1}\notin E(H_n)$ by Lemma~\ref{triangle-free}. So $vv_i\in E(H_n)$, and thus $\{v, v_{i-1}, v_{i+1} \}\subseteq N(u)\cap N(v_i)$, a contradiction with Lemma~\ref{common-neighbor}.

Now we prove this lemma by induction on $k$. For $k=3$, by the above proof, $|N_{H_{n}}(\{u,v\}) \cap V(P_{3})|\leq 2$. In the induction step, assume that the lemma is true for $3\leq k\le  l$.  When $k=l+1$, $|N_{H_n}(v_{l+1} )\cap \{u,v\}|\le 1$ by Lemma~\ref{triangle-free}, and then $|N_{H_{n}}(\{u,v\}) \cap V(P_{l+1})|\le |N_{H_{n}}(\{u,v\}) \cap V(P_{l})|+1 \leq 2\lfloor\frac{l}{3}\rfloor + (l(\mbox{mod} ~3))+1$ by the
inductive hypothesis.

Note that if $(l+1)(\mbox{mod} ~3)=1$ or 2, then $(l+1)(\mbox{mod} ~3)=l(\mbox{mod} ~3)+1$ and $\lfloor\frac{l}{3}\rfloor=\lfloor\frac{l+1}{3}\rfloor$, and hence $2\lfloor\frac{l}{3}\rfloor + (l(\mbox{mod} ~3))+ 1 = 2\lfloor\frac{l+1}{3}\rfloor + ((l+1)(\mbox{mod} ~3))$. So, in the following, we assume that $l\ge 5$ and $(l+1)(\mbox{mod} ~3)=0$. Then $(l-2)(\mbox{mod} ~3)=0$. By ($*$),
$|N_{H_{n}}(\{u,v\}) \cap V(P_{l+1})|\le |N_{H_{n}}(\{u,v\}) \cap V(P_{l-2})|+2 \leq 2\lfloor\frac{l-2}{3}\rfloor +2= 2\lfloor\frac{l+1}{3}\rfloor$.

Therefore the proof of the lemma is complete.
\end{proof}

\begin{lemma}
\label{Pk-upper}
Let $3 \leq k \leq n$. Then
$\kappa(H_{n}; P_{k})\leq \lceil\frac{2n}{k+1}\rceil$  if $k$ is odd and $\kappa(H_{n}; P_{k})\leq \lceil\frac{2n}{k}\rceil$ if $k$ is even.
\end{lemma}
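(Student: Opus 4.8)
The plan is to exhibit, for a cleverly chosen vertex $u$ (and sometimes a second vertex adjacent to it), an explicit family $\mathcal{S}$ of vertex-disjoint paths $P_k$ whose removal isolates a small component, thereby furnishing the stated upper bound. The starting point is the same local picture used in the $K_{1,r}$ case: fix $u=u_1\cdots u_n$ with $N_{H_n}(u)=\{u^1,\dots,u^n\}$, and recall from Proposition~\ref{neighbor} that $u^{i,1}$ is a common neighbor of $u^i$ and $u^{i+1}$, so that $\langle u^{i,1},u^i,u,u^{i+1},u^{i+1,1}\rangle$-type configurations give short paths through consecutive neighbors of $u$. The idea is to bundle the $n$ neighbors of $u$ into groups, each group swallowed by a single $P_k$ that also reaches out into the second neighborhood of $u$ so as to keep the paths pairwise vertex-disjoint (using Proposition~\ref{vertex} and Proposition~\ref{path} to verify the disjointness).

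For $k$ odd, write $k=2t+1$. The key observation is that a $P_{2t+1}$ of the form $\langle a_1,\dots,a_t,u^i,u,u^{i+1},b_1,\dots,b_{t-1}\rangle$ contains exactly two neighbors of $u$, namely $u^i$ and $u^{i+1}$, but also uses up $u$ itself; to avoid reusing $u$ I instead take paths that touch $u$'s neighborhood on one side only, e.g. a path on $t+1$ vertices $u^i,u^{i,1},u^{i,1,1},\dots$ followed by $u^{i+1},u^{i+1,1},\dots$ — arranged so that each path absorbs $2$ consecutive neighbors $u^i,u^{i+1}$ of $u$ and $k-2$ further vertices drawn from deeper neighborhoods, never hitting $u$. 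Since a path of order $k$ can cover at most $2\lfloor k/3\rfloor+(k\bmod 3)\le k-1$ of $\{u^1,\dots,u^n\}$ by Lemma~\ref{uv-P}, the relevant counting is that $2$ of the $k$ vertices of each path lie in $N_{H_n}(u)$; hence $\lceil n/2\rceil$... more precisely, to cover all $n$ neighbors with paths each covering $2$ of them we need $\lceil n/2\rceil$ paths, but because $k$ is odd the remaining $k-2$ vertices of each path are even in number and can be split symmetrically into the two deeper neighborhoods, and a short bookkeeping gives that $\lceil 2n/(k+1)\rceil$ paths suffice: each block of $(k+1)/2$ consecutive neighbors $u^i,u^{i+1},\dots$ can be threaded into a single $P_k$. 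For $k$ even the same construction threads $k/2$ consecutive neighbors of $u$ into one $P_k$ (the path can now cover $k/2$ of the $u^i$ using alternating steps $u^i,u^{i,1},u^{i+1},u^{i+1,1},\dots$, a valid path of order $k$ by Proposition~\ref{star}-type adjacencies), giving $\lceil 2n/k\rceil$ paths. The boundary indices $i\in\{n-2,n-1,n\}$ need the special neighbor descriptions $u^{n,j^*}$, $u^{n-1,1,j^*}$, etc., exactly as in Lemma~\ref{3-upper} and Lemma~\ref{4-upper}; there one swaps in $u^{n,1^*},u^{n,2^*},\dots$ to close up the last block.

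First I would make precise the "threading" claim: given consecutive neighbors $u^i,u^{i+1},\dots,u^{i+s-1}$ of $u$ with $s=\lceil (k+1)/2\rceil$ (odd case) or $s=k/2$ (even case), there is an induced $P_k$ in $H_n$ containing all of them and no vertex of $\{u\}\cup(\text{other blocks})$, built by interleaving the $u^{i+j}$ with the common neighbors $u^{i+j,1}$ and the second-level vertices $u^{i+j,1,1},u^{i+j,1,2}$. Then I would check vertex-disjointness across blocks via Proposition~\ref{vertex}(a)–(d) and Proposition~\ref{path}, and handle the last block using the starred neighbors as in the earlier lemmas. Finally, counting: partitioning $\{1,\dots,n\}$ into blocks of size $s$ yields $\lceil n/s\rceil$ paths, which equals $\lceil 2n/(k+1)\rceil$ when $k$ is odd and $\lceil 2n/k\rceil$ when $k$ is even; removing these paths leaves $u$ (or an edge $\{u,u'\}$, if that is more convenient for even $k$) as an isolated component. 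The main obstacle I anticipate is the disjointness bookkeeping at the boundary indices $n-2,n-1,n$: the generic interior path uses $u^{i,1},u^{i,1,1},u^{i,1,2}$, but near the top one must substitute $u^{n-1,1},u^{n,1^*},u^{n,2^*},u^{n,3^*},u^{n-2,1,1^*},\dots$, and verifying that these substitutions still yield an induced path and remain disjoint from the previous block is the delicate part — precisely the role played by Propositions~\ref{vertex} and~\ref{path}, which must be invoked with care for each parity of $n$ and residue of $k$.
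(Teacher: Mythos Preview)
Your overall architecture matches the paper's: fix $u$, thread the neighbors $u^1,\dots,u^n$ in consecutive blocks into vertex-disjoint copies of $P_k$ via the common neighbors $u^{i,1}$, and patch the last block with the starred vertices $u^{n,(n-2)^*},u^{n,(n-2)^*,(n-3)^*},\dots$; disjointness is then handled by Propositions~\ref{vertex} and~\ref{path}. For even $k$ you have exactly the paper's construction: the alternating walk $\langle u^i,u^{i,1},u^{i+1},u^{i+1,1},\dots,u^{i+a-1},u^{i+a-1,1}\rangle$ with $a=k/2$ is a genuine $P_k$ covering $k/2$ neighbors of $u$.

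The odd case, however, is muddled. You first describe a path that ``absorbs $2$ consecutive neighbors $u^i,u^{i+1}$ of $u$ and $k-2$ further vertices drawn from deeper neighborhoods,'' which would yield $\lceil n/2\rceil$ paths, not $\lceil 2n/(k+1)\rceil$; you then assert the correct count with blocks of size $(k+1)/2$ without saying how the path is built. The fix is simpler than what you sketch: the \emph{same} alternating pattern works. For odd $k$ set $t=(k+1)/2$ and take $\langle u^i,u^{i,1},u^{i+1},u^{i+1,1},\dots,u^{i+t-2,1},u^{i+t-1}\rangle$; this has $t$ neighbors of $u$ and $t-1$ linking vertices $u^{j,1}$, hence $2t-1=k$ vertices in all. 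No second-level vertices $u^{i,1,1},u^{i,1,2}$ are needed in the interior blocks --- indeed they cannot be used there, since $u^{i,1,1}$ is not adjacent to $u^{i+1}$, so the ``path on $t+1$ vertices $u^i,u^{i,1},u^{i,1,1},\dots$ followed by $u^{i+1},u^{i+1,1},\dots$'' that you propose is not connected. The deeper/starred vertices enter only in the final block when $n$ is not a multiple of $t$ (resp.\ $a$): the last path is extended past $u^n$ along $u^{n,(n-2)^*},u^{n,(n-2)^*,(n-3)^*},\dots$ to reach length~$k$, and Proposition~\ref{path} is precisely what guarantees this tail stays disjoint from the earlier $u^{j,1}$'s. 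Finally, the reference to Lemma~\ref{uv-P} is a distraction here (it is a lower-bound tool), and there is no need to isolate an edge in the even case --- the single vertex $u$ is isolated in both parities.
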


\begin{proof}
For any $u=u_1u_2 \ldots u_n\in V(H_n)$, $N_{H_n}(u)=\{u^1,u^2,\ldots,u^n\}$, we distinguish cases pertaining to the parity of $k$ in the following.

\vskip .2cm
{\bf Case 1.} $k$ is odd.

In this case, we denote $t:=\frac{k+1}{2}$. Let $n=qt+r$ for some nonnegative integers $q$ and $r$ with $0 \leq r \leq \frac{k-1}{2}$. Since $n \geq k$, we have $q\geq1$.

\vskip .2cm
{\bf Subcase 1.1.} $r=0$.

In this subcase, we set

~~~~~~~~$P^{1}=\langle u^{1},u^{1,1},u^{2},\ldots,u^{t-1},u^{t-1,1},u^{t} \rangle$,

~~~~~~~~$P^{2}=\langle u^{t+1},u^{t+1,1},u^{t+2},\ldots,u^{k},u^{k,1},u^{k+1} \rangle$,

~~~~~~~~~~ $\vdots$

~~~~~~~~$P^{q}=\langle u^{n-t+1},u^{n-t+1,1},u^{n-t+2},\ldots,u^{n-1},u^{n-1,1},u^{n} \rangle$.\\
Then $V(P^i)\cap V(P^j)=\emptyset$ by Proposition~\ref{path} for $1\le i\neq j\le q$, and then $\mathcal{F}=\{P^{1},P^{2},\ldots,P^{q}\}$ forms a $P_k$-structure-cut of $H_n$, one component of $H_n-\mathcal{F}$ is $\{u\}$ and $|\mathcal{F}|=\frac{2n}{k+1}$.

\vskip .2cm
{\bf Subcase 1.2.} $1\leq r \leq \frac{k-1}{2}$.

In this subcase, we set

~~~~~~~~$P^{1}=\langle u^{1},u^{1,1},u^{2},\ldots,u^{t-1},u^{t-1,1},u^{t} \rangle$,

~~~~~~~~$P^{2}=\langle u^{t+1},u^{t+1,1},u^{t+2},\ldots,u^{k},u^{k,1},u^{k+1} \rangle$,

~~~~~~~~~~ $\vdots$

~~~~~~~~$P^{q}=\langle u^{n-r-t+1},u^{n-r-t+1,1},u^{n-r-t+2},\ldots,u^{n-r-1},u^{n-r-1,1},u^{n-r} \rangle$,

~~~~~~~~$P^{(q+1)}=\langle u^{n-r+1},u^{n-r+1,1},u^{n-r+2},\ldots,u^{n},u^{n,(n-2)^{*}},\ldots,u^{n,(n-2)^{*},\ldots,(n-k+2r-2)^{*}} \rangle$.
\\Then $V(P^i)\cap V(P^j)=\emptyset$ by Proposition~\ref{path} for $1\le i\neq j\le q+1$, and then $\mathcal{F}=\{P^{1},P^{2},\ldots,P^{q},P^{(q+1)}\}$ forms a $P_k$-structure-cut of $H_n$, one component of $H_n-\mathcal{F}$ is $\{u\}$ and $|\mathcal{F}|=\lceil\frac{2n}{k+1}\rceil$.

Thus, in either subcase, $\kappa(H_{n}; P_{k})\leq \lceil\frac{2n}{k+1}\rceil$.

\vskip .2cm
{\bf Case 2.} $k$ is even.

In this case, we denote $a:=\frac{k}{2}$. Let $n=qa+r$ for some nonnegative integers $q$ and $r$ with $0 \leq r \leq \frac{k-2}{2}$. Since $n \geq k$, we have $q\geq1$.

\vskip .2cm
{\bf Subcase 2.1.} $r=0$.

In this subcase, we set

~~~~~~~~$P^{1}=\langle u^{1},u^{1,1},u^{2},\ldots,u^{a},u^{a,1} \rangle$,

~~~~~~~~$P^{2}=\langle u^{a+1},u^{a+1,1},u^{a+2},\ldots,u^{k},u^{k,1} \rangle$,

~~~~~~~~~~~$\vdots$

~~~~~~~~$P^{q}=\langle u^{n-a+1},u^{n-a+1,1},u^{n-a+2},\ldots,u^{n},u^{n,1^{*}} \rangle$.
\\Then $V(P^i)\cap V(P^j)=\emptyset$ by Proposition~\ref{path} for $1\le i\neq j\le q$, and then $\mathcal{F}=\{P^{1},P^{2},\ldots,P^{q}\}$ forms a $P_k$-structure-cut of $H_n$, one component of $H_n-\mathcal{F}$ is $\{u\}$ and $|\mathcal{F}|=\frac{2n}{k}$.

\vskip .2cm
{\bf Subcase 2.2.} $1\leq r \leq \frac{k-2}{2}$.

In this subcase, we set

~~~~~~~~$P^{1}=\langle u^{1},u^{1,1},u^{2},\ldots,u^{a},u^{a,1} \rangle$,

~~~~~~~~$P^{2}=\langle u^{a+1},u^{a+1,1},u^{a+2},\ldots,u^{k},u^{k,1} \rangle$,

~~~~~~~~~~~$\vdots$

~~~~~~~~$P^{q}=\langle u^{n-r-a+1},u^{n-r-a+1,1},u^{n-r-a+2},\ldots,u^{n-r},u^{n-r,1} \rangle$,

~~~~~~~~$P^{(q+1)}=\langle u^{n-r+1},u^{n-r+1,1},u^{n-r+2},\ldots,u^{n},u^{n,(n-2)^{*}},\ldots,u^{n,(n-2)^{*},\ldots,(n-k+2r-2)^{*}} \rangle$.
\\Then $V(P^i)\cap V(P^j)=\emptyset$ by Proposition~\ref{path} for $1\le i\neq j\le q+1$, and then $\mathcal{F}=\{P^{1},P^{2},\ldots,P^{q},P^{(q+1)}\}$ forms a $P_k$-structure-cut of $H_n$, one component of $H_n-\mathcal{F}$ is $\{u\}$ and $|\mathcal{F}|=\lceil\frac{2n}{k}\rceil$.

Thus, $\kappa(H_{n}; P_{k})\leq \lceil\frac{2n}{k}\rceil$.

The proof of the lemma is now complete.
\end{proof}

\begin{lemma}
\label{Pk-lower}
Let $3 \leq k \leq n$. Then $\kappa^{s}(H_{n}; P_{k})\geq \lceil\frac{2n}{k+1}\rceil$
 if $k$ is odd and $\kappa^{s}(H_{n}; P_{k})\geq \lceil\frac{2n}{k}\rceil$
  if $k$ is even.
\end{lemma}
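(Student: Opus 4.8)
The plan is to show that any $P_k$-substructure-cut $\mathcal{F}$ has size at least the claimed bound by a counting argument over the neighbourhood of a small component. Since $\kappa^s(H_n;P_k)\le\kappa(H_n;P_k)$ and the upper bound is already established in Lemma~\ref{Pk-upper}, only the lower bound needs proof. So suppose $\mathcal{F}$ is a minimum $P_k$-substructure-cut; every element of $\mathcal{F}$ is a path $P_j$ with $1\le j\le k$, hence has at most $k$ vertices. Let $C$ be a smallest component of $H_n-\mathcal{F}$, and split into cases by $|V(C)|$, exactly as in the proofs of Lemmas~\ref{lower} and~\ref{P2-lower}.

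First I would handle $|V(C)|=1$. If $C=\{w\}$, then every element of $\mathcal{F}$, being isomorphic to some $P_j$, meets $N_{H_n}(w)$ in at most $\lceil j/2\rceil\le\lceil k/2\rceil$ vertices by Lemma~\ref{v-p} (and Lemma~\ref{triangle-free}), so $|\mathcal{F}|\cdot\lceil k/2\rceil\ge n$. When $k$ is odd this gives $|\mathcal{F}|\ge 2n/(k+1)$, hence $|\mathcal{F}|\ge\lceil 2n/(k+1)\rceil$; when $k$ is even it gives $|\mathcal{F}|\ge 2n/k\ge\lceil 2n/k\rceil$ after taking ceilings — here one must be slightly careful that $\lceil n/\lceil k/2\rceil\rceil$ really is $\ge\lceil 2n/k\rceil$, which is immediate since $\lceil k/2\rceil=k/2$.

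Next, $|V(C)|=2$: then $C$ is an edge $(u,v)$ with $|N_{H_n}(\{u,v\})|=2n-2$. By Lemma~\ref{uv-P} each element of $\mathcal{F}$ meets this neighbourhood in at most $2\lfloor j/3\rfloor+(j\bmod 3)\le\min\{k-1,\,2\lfloor k/3\rfloor+(k\bmod 3)\}$ vertices. I would verify that for $k$ odd the per-element bound $2\lfloor k/3\rfloor+(k\bmod 3)$ forces $|\mathcal{F}|\ge\lceil 2(n-1)/(\cdot)\rceil$ which is at least $\lceil 2n/(k+1)\rceil$, and similarly for $k$ even; the cleanest route is probably to use the weaker bound $k-1$, giving $(k-1)|\mathcal{F}|\ge 2n-2$, i.e. $|\mathcal{F}|\ge 2(n-1)/(k-1)$, and then check this exceeds the target — this comparison of two ceiling expressions is the first place where a short case analysis on $k\bmod 2$ and the residue $n\bmod(k\pm1)$ is needed. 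Finally, for $|V(C)|\ge 3$, I would invoke $\kappa_2(H_n)=3n-5$ (Lemma~\ref{kappa2}, valid since the relevant $n$ are large enough once $k\ge3$ and $n\ge k$) together with $|V(\mathcal{F})|\le k|\mathcal{F}|$: separating $C$ needs $k|\mathcal{F}|\ge 3n-5$, and since the target bound is roughly $2n/k\le 2n/3$, this is satisfied with room to spare for all $k\ge3$; small values of $n$ (where Lemma~\ref{kappa2} does not apply) would be dispatched directly, falling back on $\kappa_1(H_n)=2n-2$ or on $\kappa(H_n)=n$.

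The main obstacle I anticipate is not any single case but the bookkeeping in reconciling the per-element count with the exact ceiling in the statement: the bound $\lceil 2n/(k+1)\rceil$ (odd $k$) versus $\lceil 2n/k\rceil$ (even $k$) is tight precisely because $\lceil k/2\rceil=(k+1)/2$ for odd $k$, so the $|V(C)|=1$ case already gives the right answer there, but one must make sure the $|V(C)|=2$ case never produces a strictly smaller lower bound that would undercut the claim — it does not, because $2\lfloor k/3\rfloor+(k\bmod 3)$ grows slower than $\lceil k/2\rceil$ only for very small $k$, and those ($k=3,4,5$) can be checked by hand. Packaging all of this so the two parities are treated uniformly, rather than with two nearly-identical arguments, is where the care lies.
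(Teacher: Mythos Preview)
Your approach is correct and matches the paper's: contradiction via case analysis on $|V(C)|$, using Lemma~\ref{v-p} for $|V(C)|=1$, Lemma~\ref{uv-P} for $|V(C)|=2$, and the extra-connectivity results (Lemmas~\ref{kappa1}, \ref{kappa2}) together with the crude bound $|V(\mathcal{F})|\le k|\mathcal{F}|$ for larger components.

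One simplification you are missing, which also dissolves the bookkeeping you worry about: for odd $k$ the paper does \emph{not} split $|V(C)|\ge 2$ into two subcases at all. Assuming $|\mathcal{F}|\le\lceil 2n/(k+1)\rceil-1$, one has
\[
|V(\mathcal{F})|\;\le\; k\bigl(\lceil\tfrac{2n}{k+1}\rceil-1\bigr)\;\le\; k\cdot\tfrac{2n-2}{k+1}\;<\;2n-2=\kappa_1(H_n),
\]
so $\kappa_1$ alone rules out every component with $|V(C)|\ge 2$; Lemma~\ref{uv-P} and $\kappa_2$ are needed only for even $k$. This sidesteps entirely the ceiling comparison you flag as the ``main obstacle'' in the odd case. (Your remark that the $|V(C)|=2$ bound might ``undercut'' the $|V(C)|=1$ bound is also a non-issue: each case independently must yield $|\mathcal{F}|\ge$ target, and the weaker per-case bound is irrelevant once \emph{some} case applies.) For even $k$ your outline coincides with the paper's three-case argument; the base cases $n\in\{3,4\}$ are handled separately before invoking $\kappa_2$.
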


\begin{proof}
If no confusion should arise, we use $\mathcal{F}=\{\underbrace{P_1,\ldots,P_1}_{x_1}, \underbrace{P_2,\ldots,P_2}_{x_2}, \ldots, \underbrace{P_k,\ldots,P_k}_{x_k}\}$ to denote a set of connected subgraphs of $P_k$ with $|\mathcal{F}|=\sum^{k}_{i=1}x_i$ for $x_i\geq0$. %

If $3\le n \le 4$, then $k =3$ or $k=4$ as $3 \leq k \leq n$. So in either case, it suffices to show that $H_n-\mathcal{F}$ is connected whenever $|\mathcal{F}| \leq 1$. If $x_k=0$, then $H_n-P_i$ is connected for $1\le i\le k-1$ since $\kappa(H_n) =n\ge k$. Hence, we may assume that $x_k=1$, and consequently $x_1=\cdots=x_{k-1}=0$. That is, $\mathcal{F}=\{P_k\}$.
Assume that $H_n-P_k$ is disconnected, then each component of
$H_n-P_k$ contains at least two vertices as $H_n$ is $n$-regular and triangle-free.
On the other hand, by Lemma~\ref{kappa1}, $\kappa_1(H_n)=2n-2>k=|V(P_k)|$, a contradiction.

So, in the following, we may assume that $n \geq5$. We proceed this by contradiction.

\vskip .2cm
{\bf Case 1.} $k$ is odd.

Suppose to the contrary that $|\mathcal{F}| \leq\lceil\frac{2n}{k+1}\rceil-1$ and $H_n-\mathcal{F}$ is disconnected, then $H_n-\mathcal{F}$ has at least two components. Without loss of generality, let $C$ be the smallest component of $H_n-\mathcal{F}$. We consider the following two cases.

\vskip .2cm
{\bf Case 1.1.} $|V(C)| =1$.

In this subcase, $C$ is an isolated vertex. Let $V(C) =\{w\}$, then $|N_{H_n}(w)| =n$. By Lemma~\ref{v-p}, every element in $\mathcal{F}$ contains at most $\frac{k+1}{2}$ neighbors of $w$. Thus, $\frac{k+1}{2}|\mathcal{F}|\geq n$, i.e. $\frac{k+1}{2}(\lceil\frac{2n}{k+1}\rceil-1)\geq n$, a contradiction.

\vskip .2cm
{\bf Case 1.2.} $|V(C)| \geq2$.

In this subcase, $C$ contains at least one edge. By Lemma~\ref{kappa1}, $\kappa_1(H_n) =2n-2$. This implies that we have to delete at least $2n-2$ vertices to separate $C$ from $H_n$. However, from the assumption $|\mathcal{F}| \leq\lceil\frac{2n}{k+1}\rceil-1$, we infer that $|V(\mathcal{F})| \leq k(\lceil\frac{2n}{k+1}\rceil-1) \leq k(\frac{2n+k-1}{k+1}-1) =\frac{k}{k+1}(2n-2) <2n-2$, a contradiction.

\vskip .2cm
{\bf Case 2.} $k$ is even.

Suppose to the contrary that  $|\mathcal{F}| \leq\lceil\frac{2n}{k}\rceil-1$ with $H_n-\mathcal{F}$ being disconnected, and let $C$ be the smallest component of $H_n-\mathcal{F}$. Consider the following three cases.

\vskip .2cm
{\bf Case 2.1.} $|V(C)|=1$.

In this subcase, $C$ is an isolated vertex. Let $V(C) =\{w\}$, then $|N_{H_n}(w)| =n$. By Lemma~\ref{v-p}, every element in $\mathcal{F}$ contains at most $\frac{k}{2}$ neighbors of $w$. Thus, $\frac{k}{2}|\mathcal{F}|\geq n$, i.e. $\frac{k}{2}(\lceil\frac{2n}{k}\rceil-1)\geq n$, a contradiction.

\vskip .2cm
{\bf Case 2.2.} $|V(C)|=2$.

In this subcase, $C$ is an edge of $H_n$. Suppose that $V(C) =\{u, v\}$, then $|N_{H_n}(\{u, v\})| =2n-2$. By Lemma~\ref{kappa1}, every element in $\mathcal{F}$ contains at most $k-1$ neighbors of $\{u, v\}$. It means that we have to delete at least $\lceil\frac{2n-2}{k-1}\rceil$ vertices of $\mathcal{F}$ to separate $C$. However, $|\mathcal{F}| \leq\lceil\frac{2n}{k}\rceil-1\leq\frac{2n+k-2}{k}-1=\frac{2n-2}{k}<\lceil\frac{2n-2}{k-1}\rceil$, a contradiction.

\vskip .2cm
{\bf Case 2.3.} $|V(C)| \geq3$.

In this subcase, $|V(\mathcal{F})| \leq k(\lceil\frac{2n}{k}\rceil-1) $ from the assumption $|\mathcal{F}| \leq\lceil\frac{2n}{k}\rceil-1$. On the other hand, $\kappa_{2}(H_n)=3n-5$ by Lemma~\ref{kappa2}, which implied that we have to delete at least $3n-5$ vertices to separate $C$ from $H_n$. However, it is easily to check that $|V(\mathcal{F})| \leq k(\lceil\frac{2n}{k}\rceil-1) <3n-5$ for $n\geq5$, a contradiction.

Therefore we complete the proof of Lemma~\ref{Pk-lower}.
\end{proof}

Recall that $\kappa^{s}(H_{n};P_k)\leq \kappa(H_{n}; P_k)$, and hence, by Lemmas~\ref{Pk-upper} and~\ref{Pk-lower}, we have the following result.

\begin{theorem}
Let $3 \leq k \leq n$. Then $\kappa(H_{n}; P_{k})=\kappa^{s}(H_{n}; P_{k})=\lceil\frac{2n}{k+1}\rceil$ if $k$ is odd; $\kappa(H_{n}; P_{k})=\kappa^{s}(H_{n}; P_{k})=\lceil\frac{2n}{k}\rceil$ if $k$ is even.
\end{theorem}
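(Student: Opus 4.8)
The plan is to read the theorem off as an immediate corollary of the two bounds already established, so the proof is essentially a sandwich argument. By the definitions of structure and substructure connectivity, every $P_k$-structure-cut is in particular a $P_k$-substructure-cut, hence $\kappa^{s}(H_n;P_k)\le\kappa(H_n;P_k)$ always holds. It therefore suffices to bound $\kappa(H_n;P_k)$ from above and $\kappa^{s}(H_n;P_k)$ from below by the same number. For $k$ odd I would cite Lemma~\ref{Pk-upper} for $\kappa(H_n;P_k)\le\lceil\frac{2n}{k+1}\rceil$ and Lemma~\ref{Pk-lower} for $\kappa^{s}(H_n;P_k)\ge\lceil\frac{2n}{k+1}\rceil$, and then chain these with $\kappa^{s}\le\kappa$ to get $\lceil\frac{2n}{k+1}\rceil\le\kappa^{s}(H_n;P_k)\le\kappa(H_n;P_k)\le\lceil\frac{2n}{k+1}\rceil$, forcing all four quantities to coincide. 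The case $k$ even is identical with $\lceil\frac{2n}{k}\rceil$ in place of $\lceil\frac{2n}{k+1}\rceil$, using the even branches of Lemmas~\ref{Pk-upper} and~\ref{Pk-lower}.

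Since the substance is in those two lemmas, I would make sure the argument is self-contained by briefly recalling how they run, in case a referee wants the details inlined. The upper bound is constructive: one fixes an arbitrary vertex $u$ with $N_{H_n}(u)=\{u^1,\dots,u^n\}$ and uses the explicit adjacencies among $u^i$, $u^{i,1}$, and the starred neighbours to partition $N_{H_n}(u)$ into roughly $\frac{2n}{k+1}$ (resp. $\frac{2n}{k}$) vertex-disjoint induced copies of $P_k$; deleting this family isolates $u$, and the required disjointness and path structure of the pieces are exactly what Propositions~\ref{path} and~\ref{star} (and the formulas for the starred neighbours) provide. The lower bound is a counting argument on the smallest component $C$ of $H_n-\mathcal{F}$: if $|V(C)|=1$ then Lemma~\ref{v-p} limits one path to at most $\lceil k/2\rceil$ neighbours of $C$; if $|V(C)|=2$ then Lemma~\ref{uv-P} together with $\kappa_1(H_n)=2n-2$ (Lemma~\ref{kappa1}) gives the bound; and if $|V(C)|\ge 3$ then $\kappa_2(H_n)=3n-5$ (Lemma~\ref{kappa2}) exceeds $k(\lceil\tfrac{2n}{k+1}\rceil-1)$ (resp. $k(\lceil\tfrac{2n}{k}\rceil-1)$) for $n\ge 5$, and the small cases $n\in\{3,4\}$ are disposed of directly. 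In every case a cut of size $\lceil\cdot\rceil-1$ is impossible.

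I do not expect a genuine obstacle at the level of the theorem itself — it is a formal consequence of the lemmas. The only delicate point, which is already absorbed into Lemma~\ref{Pk-upper}, is verifying that the concrete path segments built from $u^i$, $u^{i,1}$, and the tail $u^{n,(n-2)^{*}},\dots$ are pairwise vertex-disjoint and truly induce paths of order $k$; this is precisely the role of Propositions~\ref{path} and~\ref{star}, so nothing new is needed here. Hence the proof of the theorem is just the combination of Lemmas~\ref{Pk-upper} and~\ref{Pk-lower} with the inequality $\kappa^{s}(H_n;P_k)\le\kappa(H_n;P_k)$.
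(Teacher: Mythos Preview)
Your proof is correct and matches the paper's own argument exactly: the theorem is stated there as an immediate consequence of Lemmas~\ref{Pk-upper} and~\ref{Pk-lower} together with the trivial inequality $\kappa^{s}(H_n;P_k)\le\kappa(H_n;P_k)$. One small slip in your recap: the disjointness of the path pieces in Lemma~\ref{Pk-upper} is justified by Proposition~\ref{path} (and implicitly Proposition~\ref{neighbor}), not Proposition~\ref{star}, which concerns the $K_{1,3}$ and $K_{1,4}$ constructions.
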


\section{Conclusions}

In this paper, we consider the $T$-structure connectivity and $T$-substructure connectivity of the twisted hypercube $H_n$ for $T\in\{K_{1,r}, P_k\}$ and show that
\vskip.2cm

~~~~~~~~~~~~~$\kappa(H_{n}; K_{1,r})=\kappa^{s}(H_{n}; K_{1,r})=\lceil \frac{n}{2}\rceil$ for $n\geq 4$ and $3\leq r\leq 4$;

$$\kappa(H_{n}; P_k)=\kappa^{s}(H_{n}; P_k)=\left\{
\begin{array}{ll}
n,& \mbox{if $k=1$,}\\
n-1,  & \mbox{if $k=2$ and $\kappa(n-1)=1$,}\\
n,  & \mbox{if $k=2$ and $\kappa(n-1)\geq 2$,}\\
\lceil\frac{2n}{k+1}\rceil,  & \mbox{if $3\leq k\leq n$ and $k$ is odd,}\\
\lceil\frac{2n}{k}\rceil,  & \mbox{if $4\leq k\leq n$ and $k$ is even.}\\
\end{array}
\right.$$

However, determining the $K_{1,r}$-structure connectivity and $K_{1,r}$-substructure connectivity of $H_n$ with  $r\ge 5$ remain open.
To explore $\kappa(H_n; C_k)$ and $\kappa^{s}(H_n; C_k)$, the approach used in this paper is invalid as $H_n$ is a nonbipartite graph.
But one may explore the structure connectivity and substructure connectivity of other interconnection networks by the approach used in this paper.

\vskip 0.6cm

\noindent{\bf \Large Acknowledgments}

\vskip 0.3cm

\noindent Xiaolan Hu is partially supported
by NNSFC under grant number 11601176 and  NSF of Hubei Province under grant number 2016CFB146. Huiqing Liu is partially supported by NNSFC under grant numbers 11571096 and 61373019.

\vskip 0.2cm

\noindent

\end{document}